\def\Bbb{\mathbb}
\def\Cal{\mathcal}
\def\Dt{\partial_t}
\def\eb{\varepsilon}
\def\R {\mathbb{R}}
\def\<{\left<}
\def\>{\right>}
\def\Dx{\Delta_x}
\def\({\left(}
\def\){\right)}
\newtheorem{proposition}{Proposition}[section]
\newtheorem{theorem}[proposition]{Theorem}
\newtheorem{corollary}[proposition]{Corollary}
\newtheorem{lemma}[proposition]{Lemma}
\theoremstyle{definition}
\newtheorem{definition}[proposition]{Definition}
\newtheorem{remark}[proposition]{Remark}
\numberwithin{equation}{section}
\def \au {\rm}
\def \ti {\it}
\def \jou {\rm}
\def \no#1#2#3 {{\bf #1} (#3), #2.}
\def \eds#1#2#3 {#1, #2, #3.}
\title[Inertial manifolds for hyperbolic equations] {Inertial manifolds for the hyperbolic relaxation of semilinear parabolic equations}
\author[V. Chepyzhov, A. Kostianko and S. Zelik]{ Vladimir Chepyzhov${}^{1,2}$, Anna Kostianko${}^3$ and Sergey Zelik${}^3$}
\address{${}^1$ Institute for Information Transmission Problems\\
Russian Academy of Sciences,
Bol\textsc{\char13}shoi Karetnyi per. 19, Moscow 111994, Russia}
\address{${}^2$ National Research University Higher School of Economics
Myasnitskaya Street 20
Moscow 101000, Russia}
\address{${}^3$
University of Surrey, Department of Mathematics,
Guildford, GU2 7XH, United Kingdom.}
\email{chep@iitp.ru; a.kostianko@surrey.ac.uk; s.zelik@surrey.ac.uk}
\subjclass[2000]{35B40, 35B45}
\keywords{Cahn-Hilliard equation, spatial averaging principle, inertial manifold}
\begin{document}
\begin{abstract} The paper gives a comprehensive study of Inertial Manifolds for  hyperbolic relaxations of an abstract semilinear parabolic equation in a Hilbert space. A new scheme of constructing  Inertial Manifolds for such type of problems is suggested and optimal spectral gap conditions which guarantee their existence are established. Moreover,  the dependence of the constructed manifolds on the relaxation parameter in the case of the parabolic singular limit is also studied.
\end{abstract}
\thanks{This work is partially supported by  the grants  14-41-00044 and 14-21-00025 of RSF as well as  grants 14-01-00346  and 15-01-03587 of RFBR}
\maketitle
\tableofcontents
\section{Introduction}\label{s0}
There is a common belief that the dynamics generated by dissipative partial differential equations (PDEs) is essentially finite-dimensional and can be effectively described by finitely many parameters which obey the associated system of ordinary differential equations (ODEs)  - the so-called Inertial Form (IF) of the initial problem. This belief is partially justified by the theory of {\it global attractors} which has been intensively developing during the last 40 years. Recall that by definition a global attractor is a compact invariant set in the phase space which attracts the images of all bounded sets as time goes to infinity. Thus, on the one hand, the attractor (if it exists) captures all the nontrivial dynamics of the system considered and, on the other hand, is essentially smaller than the initial phase space (which is, say, $L^2(\Omega)$). Moreover, under more or less general and natural assumptions one can prove that the global attractor has finite Hausdorff and box-counting dimensions, see \cite{BV,CV,MirZe,29,tem} and references therein. Due to the so-called Man\'e projection theorem, this result allows us to build up the IF with {\it H\"older} continuous non-linearity, see, say, \cite{28,Zel}.
\par
 Note that the reduction of {\it smooth} PDEs to the ODEs where the nonlinearity is {\it only} H\"older continuous does not look entirely satisfactory (e.g., even the uniqueness of solutions may be lost under such reduction) and despite many efforts building up more regular IFs under more or less general assumptions remains a mystery, see the survey \cite{Zel} and references therein. However, there is an exceptional (in a sense) case where this problem is resolved, namely, when the considered system possesses an Inertial Manifold (IM). Roughly speaking, an IM is a $C^{1}$-smooth normally hyperbolic finite-dimensional invariant submanifold of the phase space which contains the global attractor. If it exists,  the restriction of the initial equation to this manifold gives the desired IF. Of course, the existence of such an object requires some kind invariant cone or/and spectral gap conditions to be satisfied and this is a big restriction, see \cite{chep-gor,FST,mik,mal-par,rom-man, rosa-tem} and references therein. On the other hand, the recent counterexamples show that, in the case where the IM does not exist, the limit dynamics may remain infinite-dimensional (despite the fact that the global attractor has the finite box-counting dimension) and even allow us to make a conjecture that the IM is the only natural obstruction for such dynamics to exist, see \cite{EKZ,Zel} for more details. This, in particular, motivates the interest to finding the precise conditions which guarantee the existence or non-existence of IMs in various classes of dissipative systems generated by PDEs. One of the most studied nontrivial classes of such systems is given by the abstract semilinear parabolic problem, see e.g., \cite{hen},
 \begin{equation}\label{0.par}
 \Dt u+\mathcal Au=\mathcal F(u)
 \end{equation}
in a Hilbert space $\mathcal H$. Here $\Cal A$ is a positive unbounded operator which generates an analytic semigroup in $\mathcal H$ and the non-linearity $\mathcal F$ is globally Lipschitz as the map from $D(\mathcal A^{\beta})$, $0\le\beta<1$, to $\mathcal H$ with Lipschitz constant $L$. Note that although the assumption of {\it global} Lipschitz continuity of the non-linearity is usually not satisfied for the initial system, it appears naturally  after cutting off the nonlinearity outside of the absorbing ball, so this assumption is not a big restriction. Throughout  this paper, we assume implicitly that the cut-off procedure is already done and consider only globally Lipschitz continuous nonlinearities.
\par
The precise condition for \eqref{0.par} to have an IM is well-known in the case where $\mathcal A$ is self-adjoint and $\mathcal A^{-1}$ is compact:
\begin{equation}\label{0.gap}
\frac{\lambda_{N+1}-\lambda_N}{\lambda_{N+1}^\beta+\lambda_{N}^\beta}>L,
\end{equation}
where $\{\lambda_n\}_{n=1}^\infty$ are the eigenvalues of $\mathcal A$ enumerated in the non-decreasing order and $N$ is the dimension of the IM. It is also known that if \eqref{0.gap} is violated for all $N$, one can construct a smooth nonlinearity $F$ in such way that the dynamics generated by \eqref{0.par} will be infinite dimensional, see \cite{EKZ,Zel2,Zel}. However, the situation with the precise conditions become much more delicate and essentially less clear if the operator $\mathcal A$ is not selfadjoint. The most dangerous for the existence of IMs is  the appearance of Jordan cells in the spectrum of the operator $\mathcal A$. Indeed, in the model case where $\mathcal H=H\times H$, $A$ is a self-adjoint positive operator in $H$ with compact inverse, $\mathcal F$ is globally Lipschitz continuous from $\mathcal H$ to $\mathcal H$ (i.e., $\beta=0$), and
\begin{equation}
\mathcal A:=\(\begin{matrix} 1&1\\0&1\end{matrix}\)A,
\end{equation}
the precise spectral gap condition for the existence of IM reads
\begin{equation}
\frac{\lambda_{N+1}-\lambda_N}{\lambda_{N+1}^{1/2}+\lambda_{N}^{1/2}}>\sqrt{L},
\end{equation}
see \cite{Zel4},
which coincides up to the square root in the right-hand side with the case of $\beta=1/2$ in the self-adjoint case and very far from the expected condition with $\beta=0$:
\begin{equation}\label{0.sg-sharp}
\lambda_{N+1}-\lambda_N>2L.
\end{equation}
This difference causes the crucial mistake in the attempt to construct the IM for the 2D Navier-Stokes equations using the so-called Kwak transform, see \cite{Zel4,kwak,tem-wrong}. On the other hand, for the non-selfadjoint operator of the form
\begin{equation}
\mathcal A:=\(\begin{matrix} \lambda&-\omega\\\omega&\lambda\end{matrix}\)A,\ \ \lambda>0,\ \omega\in\R,
\end{equation}
which appears e.g., under the study of complex Ginzburg-Landau equations of the form
\begin{equation}\label{0.cGL}
\Dt u=(\lambda+i\omega)\Dx u-F(u,\bar u),\ \ u=u_1+i u_2,
\end{equation}
the conditions for the existence of IMs remain close to the expected \eqref{0.sg-sharp}. Moreover, the deviation from the self-adjoint case (due to the presence of $\omega\ne0$) is even helpful here. Indeed, in the case of equation \eqref{0.cGL} on 3D torus, the normally-hyperbolic IM is constructed for the case $\omega\ne0$ (see \cite{Zel3}) although as known for a long time (see \cite{rom-counter}) such an object may not exist in the case $\omega=0$.
\par
The main aim of the present paper is to give a comprehensive study of a different type deviation from the self-adjoint case, namely, the case of the so-called hyperbolic relaxation of problem \eqref{0.par}:
\begin{equation}\label{0.main}
\eb\Dt^2u+\Dt u+Au=F(u),
\end{equation}
where $\eb>0$ is the relaxation parameter, $A$ is a positive self-adjoint operator in a Hilbert space $H$ with compact inverse with the eigenvalues $\{\lambda_n\}_{n=1}^\infty$ and $F:H\to H$ is globally Lipschitz with the Lipschitz constant $L$. Introducing $v=\Dt u$, one can reduce this second order equation to the form similar to \eqref{0.par}:
\begin{equation}\label{0.bad}
\Dt\(\begin{matrix} u\\ v\end{matrix}\)+\(\begin{matrix}0&-1\\\frac1\eb A&+\frac1\eb\end{matrix}\)\(\begin{matrix}u\\v\end{matrix}\)=\(\begin{matrix}0\\\frac1\eb F(u)\end{matrix}\),
\end{equation}
however, applying the general theory is far from being straightforward here, not only since the obtained operator $\mathcal A=\mathcal A_\eb$ is not self-adjoint and even not sectorial, but also due to the strong singularity and associated boundary layers at $\eb=0$. This problem has been partially overcome in \cite{sola1,sola2}, see also \cite{chep-hyp,chep-gor05}, by introducing the specially constructed equivalent norm in the energy phase space $\mathcal E=D(A^{1/2})\times H$ which allowed to construct the IM under the following assumptions:
\begin{equation}\label{0.sg-old}
\lambda_{N+1}-\lambda_N>4L,\ \ \ \frac1\eb> 4\lambda_{N+1}+\frac{\lambda_{N+1}-\lambda_N}{R(R-1)},\ \ R:=\frac{\lambda_{N+1}-\lambda_N}{4L}>1,
\end{equation}
see \cite{chep-hyp} (see also \cite{chal,gor-cha} for more general cases including the dependence of the nonlinearity $f$ on $\Dt u$ or/and the strong damping term $\Dx\Dt u$). Unfortunately, these conditions are clearly not optimal since they do not recover the known sharp conditions \eqref{0.sg-sharp} in the parabolic limit $\eb\to0$, so the dependence of the actual required spectral gap on the parameter $\eb$ remained unclear.
\par
In the present paper, we suggest a new approach to problems of the form \eqref{0.main} which is a variation of the so-called Perron method and in a sense close to the approach of \cite{mik} (see also \cite{Zel}). Under this approach, we do not utilize the reduction \eqref{0.bad} to the first order equation and work directly with the trajectories of the second order equation \eqref{0.main} applying the Banach contraction theorem in the weighted space $L^2_{e^{\theta t}}(\R_-,H)$ where $\theta=\theta(N,\eb)$ is a properly chosen exponent. This allows us to neglect the boundary layer effects from the one hand and from the other hand, to develop a machinery for computing the crucial Lipschitz constants just by expanding the solutions of the corresponding linear problem to Fourier series  and using the Fourier transform in time for finding the optimal bounds for the norms of Fourier coefficients, see Appendix for details. In the present paper, we demonstrate this approach on the model example of problem \eqref{0.main} only although we believe that it will be helpful for many other classes of dissipative PDEs especially containing singular perturbations. We return to this somewhere else.
\par
The main result of the paper is the following theorem, see also Theorems \ref{Th1.mmain} and \ref{Th2.main1}.
\begin{theorem}\label{Th0.main} Let $A$ be positive self-adjoint operator with compact inverse in a Hilbert space $H$ with the eigenvalues $\{\lambda_n\}_{n=1}^\infty$ enumerated in the non-decreasing order and let $F:H\to H$ be globally Lipschitz with the Lipschitz constant $L$. Assume also that the numbers $L$, $\eb\ge0$ and $N\in\Bbb N$ satisfy the following conditions:
\begin{equation}\label{0.sg-sg}
\lambda_{N+1}-\lambda_N>2L,\ \ \frac1\eb\ge3\lambda_{N+1}+\lambda_N.
\end{equation}
Then equation \eqref{0.main} possesses an $N$-dimensional IM $\mathcal M=\mathcal M_\eb$ and this manifold is Lipschitz continuous with respect to $\eb$ at $\eb=0$.
\end{theorem}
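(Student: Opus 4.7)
I would adopt a Perron-type fixed point construction on backward trajectories, directly for the second-order equation \eqref{0.main} and avoiding the ill-conditioned first-order rewriting \eqref{0.bad}. Let $P_+$ be the orthogonal projector onto $\mathrm{span}\{e_1,\dots,e_N\}$, $P_-=I-P_+$, and work in the weighted trajectory space $\X_\theta:=L^2_{e^{\theta t}}(\R_-,H)$ (supplemented by the natural weighted bounds on $\dot u$ and $A^{1/2}u$). The exponent $\theta=\theta_\varepsilon>0$ is taken to be the smaller root of
$$\varepsilon\theta^2-\theta+\tfrac12(\lambda_N+\lambda_{N+1})=0;$$
this root is real because $1/\varepsilon\ge 3\lambda_{N+1}+\lambda_N$ implies $1/\varepsilon\ge 2(\lambda_N+\lambda_{N+1})$, it satisfies $\theta_\varepsilon\to(\lambda_N+\lambda_{N+1})/2$ as $\varepsilon\to 0$, and it is the precise choice that symmetrises the shifted eigenvalues $a_n:=\lambda_n-\theta+\varepsilon\theta^2$ across the gap: $a_N=-(\lambda_{N+1}-\lambda_N)/2$ and $a_{N+1}=+(\lambda_{N+1}-\lambda_N)/2$.

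The characteristic roots $\mu_\pm^{(n)}=(-1\pm\sqrt{1-4\varepsilon\lambda_n})/(2\varepsilon)$ satisfy $\mathrm{Re}\,\mu_-^{(n)}<-\theta$ for all $n$, $\mu_+^{(n)}>-\theta$ for $n\le N$, and $\mathrm{Re}\,\mu_+^{(n)}<-\theta$ for $n\ge N+1$, so the $\X_\theta$-kernel of $\varepsilon\partial_t^2+\partial_t+A$ is exactly $N$-dimensional and is bijectively parametrised by $P_+u(0)$. Variation of constants then produces, for each $\xi\in P_+H$ and $h\in\X_\theta$, a unique $u=\LL_\xi h\in\X_\theta$ solving $\varepsilon\ddot u+\dot u+Au=h$ with $P_+u(0)=\xi$, and any fixed point of $\T_\xi(u):=\LL_\xi F(u)$ is a backward $\X_\theta$-trajectory of \eqref{0.main}. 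To turn this into a contraction I would bound $\|\LL_0\|_{\X_\theta\to\X_\theta}$ via Fourier analysis: the substitution $u=e^{-\theta t}w$ kills the weight, the problem diagonalises on the eigenbasis of $A$, and after extending the half-line equation to $\R$ so that the data at $t=0$ is matched by homogeneous solutions, Plancherel identifies the modal norm with $\sup_\tau|m_n(\tau)|^{-1}$ for $m_n(\tau)=a_n-\varepsilon\tau^2+i(1-2\varepsilon\theta)\tau$. A direct minimisation of $|m_n|^2=(a_n-\varepsilon\tau^2)^2+(1-2\varepsilon\theta)^2\tau^2$ shows that its infimum in $\tau$ is attained at $\tau=0$, with value $|a_n|$, whenever $a_n\le(1-2\varepsilon\theta)^2/(2\varepsilon)$; the identity $(1-2\varepsilon\theta_\varepsilon)^2=1-2\varepsilon(\lambda_N+\lambda_{N+1})$ reduces this, at the worst mode $n=N+1$, to exactly $1/\varepsilon\ge 3\lambda_{N+1}+\lambda_N$. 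For other $n$ the condition is either automatic ($a_n\le 0$) or ensures a larger infimum, so $\|\LL_0\|\le 2/(\lambda_{N+1}-\lambda_N)$ and $\T_\xi$ is a contraction of ratio $2L/(\lambda_{N+1}-\lambda_N)<1$ — precisely the first gap hypothesis.

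The Banach fixed point theorem then yields $u^*(\xi)\in\X_\theta$ depending Lipschitzly on $\xi$, and the IM is defined as the $N$-dimensional graph
$$\M_\varepsilon:=\{(u^*(\xi)(0),\partial_tu^*(\xi)(0)):\xi\in P_+H\}\subset\E;$$
invariance follows from the standard translation argument applied to the uniqueness of the fixed point. For Lipschitz continuity at $\varepsilon=0$ I would compare the $\varepsilon$-fixed-point equation with the parabolic limit: $\theta_\varepsilon$ is smooth in $\varepsilon$, the bound $2/(\lambda_{N+1}-\lambda_N)$ is uniform, and $u^*_\varepsilon-u^*_0$ solves a linear problem with forcing of order $O(\varepsilon)\,\partial_t^2u^*_0$, controllable by a second application of the contraction estimate. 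I expect the principal obstacle to be the rigorous Fourier-multiplier estimate on the half-line $\R_-$: one must justify that the boundary condition $P_+u(0)=\xi$ can be incorporated into the homogeneous kernel without degrading the Plancherel bound, and confirm that $1/\varepsilon\ge 3\lambda_{N+1}+\lambda_N$ is indeed the sharp threshold at which the minimum of $|m_{N+1}(\tau)|$ migrates to $\tau=0$. This is the content of the announced Appendix, and once it is settled the remainder of the argument is a standard variation on the classical Perron method.
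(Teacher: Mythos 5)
Your overall scheme --- Perron's method applied directly to the second-order equation, the weighted backward trajectory space $L^2_{e^{\theta t}}(\R_-,H)$, the choice of $\theta$ as the smaller root of $\eb\theta^2-\theta+\tfrac12(\lambda_N+\lambda_{N+1})=0$, and the mode-by-mode minimisation of the Fourier multiplier $m_n(\tau)=a_n-\eb\tau^2+i(1-2\eb\theta)\tau$ --- is the same as the paper's, and your algebra identifying $|a_{N+1}|=(\lambda_{N+1}-\lambda_N)/2$ as the critical minimum and $\eb(3\lambda_{N+1}+\lambda_N)\le 1$ as the threshold at which the minimiser stays at $\tau=0$ is correct.

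The gap is precisely the obstacle you flag at the end and leave open: $P_+u(0)=\xi$ is not the right boundary condition, and the correction it forces genuinely degrades the Plancherel bound. Let $\mathcal{L}$ be the whole-line multiplier operator on $L^2_{e^{\theta t}}(\R,H)$. If $h$ is supported on $\R_-$, then on $\R_+$ each mode of $\mathcal{L}h$ is a homogeneous solution lying in $L^2_{e^{\theta t}}(\R_+)$; for $n\le N$ this kills the $e^{\mu_n^+ t}$ coefficient but leaves a nontrivial $q_n e^{\mu_n^- t}$, so $P_N(\mathcal{L}h)(0)\ne 0$ in general. The half-line solution with $P_+u(0)=0$ is then $\mathcal{L}h|_{\R_-}-\mathcal{S}\,P_N(\mathcal{L}h)(0)$, and the subtracted homogeneous term enlarges the operator norm beyond $2/(\lambda_{N+1}-\lambda_N)$, so the sharp contraction constant is not recovered. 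The paper's resolution is not a finer estimate of this correction but a different parameterization of the fibres: it imposes $\widehat{\mathcal{P}}_N\Dt u(0)+\mathcal{P}_N u(0)=p$ with
\[
\widehat{\mathcal{P}}_N=\sum_{n\le N}\frac{\eb}{\sqrt{1-4\eb\lambda_n}}(\cdot,e_n)e_n,\qquad
\mathcal{P}_N=\sum_{n\le N}\frac{1+\sqrt{1-4\eb\lambda_n}}{2\sqrt{1-4\eb\lambda_n}}(\cdot,e_n)e_n,
\]
which is exactly the linear functional that reads off the $e^{\mu_n^+ t}$-coefficient. For $\mathcal{L}h$ with $h$ supported on $\R_-$ that functional vanishes identically, so $u=\mathcal{S}p+\mathcal{L}h$ holds with no correction and the half-line operator inherits the whole-line norm $2/(\lambda_{N+1}-\lambda_N)$ exactly. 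In the parabolic limit $\eb=0$ the two boundary conditions coincide (which is why your version would succeed there), but for $\eb>0$ this is a genuinely hyperbolic effect and is the crux of reaching the sharp gap $\lambda_{N+1}-\lambda_N>2L$. Two smaller omissions: the exponential tracking property, which is part of the definition of an IM, requires a second contraction argument on the whole line with a cut-off homotopy between an arbitrary trajectory and one on the manifold, and is not addressed in your sketch; and the $O(\eb)$ distance estimate between $\M_\eb$ and $\M_0$ relies on a uniform $L^2_{e^{\theta t}}$ control of $\Dt^2 u_0$, which rests on a higher-order weighted regularity estimate for the linear problem that you invoke implicitly but do not establish.
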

We see that the first condition of \eqref{0.sg-sg} now {\it coincides} with the assumption \eqref{0.sg-sharp} for the limit parabolic equation and as shown in Section \ref{s3} below is {\it optimal} for $\eb\ne0$ as well. Thus, the optimal spectral gap condition for the perturbed problem  \eqref{0.main} is surprisingly independent of $\eb$. Concerning the second condition of \eqref{0.sg-sg} although it is essentially better than the analogous assumption of \eqref{0.sg-old}, we do not know how optimal  it is. Indeed, the natural {\it necessary} condition for the existence of the normally-hyperbolic IM here is $\frac1\eb>4\lambda_N$, so we expect that the spectral gap condition will be {\it different} from \eqref{0.sg-sharp} when
$$
4\lambda_N<\frac1\eb<3\lambda_{N+1}+\lambda_N=4\lambda_N+3(\lambda_{N+1}-\lambda_N).
$$
In particular, it is natural to expect that the allowed Lipschitz constant $L$ should tend to zero as $\frac1\eb\to4\lambda_N$. In order to avoid the technicalities, we did not present the analysis of this particular case in the paper.
\par
The paper is organized as follows. The necessary definitions and preliminary facts are given in Section \ref{s1}. The proof of the main result (Theorem \ref{Th0.main}) is presented in Section \ref{s2}. The concluding discussion concerning the optimality of the obtained spectral gap conditions, further regularity and normal hyperbolicity of the IMs, etc., is given in Section \ref{s3}. In addition, the applications of the obtained results to damped wave equations are also indicated there. Finally, all necessary estimates related with the linear problem are collected in Appendix.

\section{Preliminaries}\label{s1}
In this section, we introduce the main concepts and prepare some tools which will be used throughout the paper. We recall that our main object of study is the following hyperbolic relaxation of the abstract semilinear parabolic problem in a Hilbert space $H$:
\begin{equation}\label{1.maineq}
\eb\Dt^2 u+\Dt u+Au=F(u),\ u\big|_{t=0}=u_0,\ \ \Dt u\big|_{t=0}=u_0',
\end{equation}
where $\eb\ge0$ is a small parameter, $A:\,D(A)\to H$ is a positive self-adjoint operator in $H$ with compact inverse and the nonlinearity $F:H\to H$ is assumed to be globally Lipschitz continuous in $H$:
\begin{equation}\label{1.F}
\|F(u_1)-F(u_2)\|_H\le L\|u_1-u_2\|_H,\ \ u_1,u_2\in H,\ \ F(0)=0.
\end{equation}
Let $\{\lambda_n\}_{n=1}^\infty$ be the eigenvalues of the operator $A$ enumerated in the non-decreasing order and let $\{e_n\}_{n=1}^\infty$ be the corresponding (complete orthonormal) system of eigenvectors. Then, due to the Parseval equality, for every $u\in H$,
$$
u=\sum_{n=1}^\infty u_ne_n,\ \ u_n=(u,e_n),\ \ \|u\|^2_H=\sum_{n=1}^\infty u_n^2,
$$
where $(\cdot,\cdot)$ stands for the inner product in the Hilbert space $H$. We also denote by $H^s:=D(A^{s/2})$, $s\in\R$, the scale of Hilbert spaces generated by the operator $A$. The norms in these spaces are given by
$$
\|u\|_{H^s}^2:=\sum_{n=1}^\infty\lambda_n^s u_n^2,\ \ u=\sum_{n=1}^\infty u_ne_n.
$$
Then, obviously, $H^{s_1}\subset H^{s_2}$ for $s_1\ge s_2$ (and the embedding is compact if $s_1> s_2$) and the operator $A$ is an isometric isomorphism between $H^{s+2}$ and $H^s$. We denote by $P_N$ the orthoprojector to the subspace generated by the first $N$ eigenvectors of the operator $A$:
$$
P_Nu:=\sum_{n=1}^N(u,e_n)e_n,\ \ Q_Nu:=(1-P_N)u=\sum_{n=N+1}^\infty(u,e_n)e_n.
$$
Note that these operators act in all $H^s$, $s\in\R$, and are also  orthoprojectors in all these spaces. \par
We use the notations $\xi_u:=(u,\Dt u)$ and introduce the energy phase space $\mathcal E_\eb$ for problem \eqref{1.maineq} as follows: $\mathcal E_\eb:=H^1\times H$ if $\eb\ne0$ and $\mathcal E_0:=H^1\times H^{-1}$  endowed by the following norm:
$$
\|\xi_u\|^2_{\mathcal E_\eb}:=\eb\|\Dt u\|^2_H+\|\Dt u\|^2_{H^{-1}}+\|u\|^2_{H^1}.
$$
It worth noting that, in the limit case $\eb=0$, we need not the initial data for $\Dt u$ for the well-posedness of problem \eqref{1.maineq}, but we prefer to keep it for comparison with the trajectories corresponding to the case $\eb\ne0$ where this initial data is necessary. Of course, the limit dynamical system which corresponds to the parabolic problem \eqref{1.maineq} (with $\eb=0$) is defined not on the whole space $\mathcal E_0$, but only on the invariant manifold $\widehat{\mathcal E}_0$ determined by
\begin{equation}
\widehat{\mathcal E}_0=\big\{(u,v)\in\mathcal E_0,\  v=F(u)-Au\big\},
\end{equation}
but we will identify $\mathcal E_0$ with $\widehat{\mathcal E}_0$ everywhere in the sequel if it does not lead to misunderstandings. Analogously, the scale of energy phase spaces $\mathcal E^s_\eb$, $s\in\R$, is determined by the following norm:
$$
\|\xi_u\|^2_{\mathcal E_\eb^s}:=\eb\|\Dt u\|^2_{H^s}+\|\Dt u\|^2_{H^{s-1}}+\|u\|^2_{H^{s+1}}.
$$
The next theorem gives the standard result on the global well-posedness of problem \eqref{1.maineq}.
\begin{theorem}\label{Th1.well} Let the nonlinearity $F(u)$ satisfy \eqref{1.F} and $\eb\ge0$. Then, for every $\xi_0\in\mathcal E_\eb$, there is a unique solution $\xi_u\in C(\R_+,\mathcal E_\eb)$ of problem \eqref{1.maineq} satisfying $\xi_u\big|_{t=0}=\xi_0$ and this solution possesses the following estimate:
\begin{equation}\label{1.non-dis}
\|\xi_u(t)\|^2_{\mathcal E_\eb}+\int_{t}^{t+1}\|\Dt u(s)\|^2_H\,ds\le Ce^{K t}\|\xi_0\|^2_{\mathcal E_\eb},
\end{equation}
where the constants $C$ and $K$ depend on $L$, but are uniform with respect to $\eb\to0$. Moreover, if in addition, $\xi_0\in\mathcal E^1_\eb$, then the solution $\xi_u(t)\in\mathcal E^1_\eb$ for all $t\ge0$ and
\begin{equation}\label{1.non-dis-1}
\|\xi_u(t)\|^2_{\mathcal E^1_\eb}+\int_{t}^{t+1}\|\Dt u(s)\|^2_{H^1}\,ds\le Ce^{K t}\|\xi_0\|^2_{\mathcal E^1_\eb},
\end{equation}
and the estimate is uniform with respect to $\eb\to0$.
\end{theorem}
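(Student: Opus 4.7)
The plan is standard in flavour: construct solutions by Galerkin approximation $u^N=P_Nu$, derive a priori estimates uniform in $N$ and in $\eb$ from two well-chosen multiplier identities, pass to the limit by compactness, and then handle uniqueness and the higher-regularity bound by applying the same estimates to differences and difference quotients. Testing the projected equation $\eb\Dt^2u^N+\Dt u^N+Au^N=P_NF(u^N)$ against $\Dt u^N$ in $H$ produces
$$
\tfrac{1}{2}\tfrac{d}{dt}\bigl(\eb\|\Dt u^N\|_H^2+\|u^N\|_{H^1}^2\bigr)+\|\Dt u^N\|_H^2=(F(u^N),\Dt u^N),
$$
so by Young's inequality together with $\|F(u)\|_H\le L\|u\|_H\le L\lambda_1^{-1/2}\|u\|_{H^1}$ from \eqref{1.F}, Gronwall produces a uniform exponential bound on $\eb\|\Dt u^N\|_H^2+\|u^N\|_{H^1}^2$ plus the integral $\int_t^{t+1}\|\Dt u^N(s)\|_H^2\,ds$, matching the right-hand side of \eqref{1.non-dis}. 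Testing with $A^{-1}\Dt u^N$ instead yields the $H^{-1}$ analogue, controlling $\eb\|\Dt u^N\|_{H^{-1}}^2+\|u^N\|_H^2$ together with $\int_t^{t+1}\|\Dt u^N\|_{H^{-1}}^2\,ds$, again uniformly in $N$ and $\eb$.

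The delicate point is the pointwise control of $\|\Dt u^N(t)\|_{H^{-1}}^2$ \emph{without} the $\eb$-weight that the $A^{-1}\Dt u$ multiplier naturally produces, since the naive chain $\|\Dt u\|_{H^{-1}}^2\le\lambda_1^{-1}\|\Dt u\|_H^2$ costs a factor $\eb^{-1}$ against the first estimate. My plan is to work mode by mode using the explicit structure of the scalar ODE $\eb u_n''+u_n'+\lambda_n u_n=F_n(u)$: for $4\eb\lambda_n<1$ the characteristic roots split into a slow one of order $-\lambda_n$ and a fast one of order $-1/\eb$, and variation of parameters represents $u_n'(t)$ as a slow contribution plus a boundary layer of amplitude $|u_n'(0)+\lambda_n u_n(0)|\,e^{-t/\eb}$. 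The $\lambda_n^{-1}$-weighted sum of squared amplitudes is bounded by $2\|u_0'\|_{H^{-1}}^2+2\|u_0\|_{H^1}^2\le 2\|\xi_0\|^2_{\mathcal E_\eb}$, while the slow and Duhamel pieces are controlled by the two energy estimates; this supplies the missing pointwise bound uniformly in $\eb$. In the remaining range $4\eb\lambda_n\ge 1$ the roots become complex with real part $-1/(2\eb)$ and the same type of estimate applies.

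Aubin--Lions compactness then extracts a limit $u$ solving \eqref{1.maineq} in the distributional sense, with the estimates preserved by lower semicontinuity and continuity $\xi_u\in C(\R_+,\mathcal E_\eb)$ following from the standard energy equality. Uniqueness is obtained by applying the first energy identity to the difference $w=u^{(1)}-u^{(2)}$ of two solutions, whose forcing obeys $\|F(u^{(1)})-F(u^{(2)})\|_H\le L\|w\|_H$. For the $\mathcal E_\eb^1$ bound \eqref{1.non-dis-1} I would apply the same multiplier identities to the difference quotient $v^h(t)=h^{-1}(u(t+h)-u(t))$, which solves the same equation with forcing Lipschitz-bounded in $H$ by $L\|v^h\|_H$; letting $h\to 0$ places $\Dt u$ in $\mathcal E_\eb$ with norm controlled by $\|\xi_0\|^2_{\mathcal E_\eb^1}$, and the relation $Au=F(u)-\Dt u-\eb\Dt^2u$ then gives the $H^2$ control on $u$ needed to close the $\mathcal E_\eb^1$ estimate.

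The main obstacle throughout is the uniform-in-$\eb$ pointwise control of $\|\Dt u\|_{H^{-1}}^2$ in the second paragraph: every purely integral multiplier identity I can see is off by exactly one power of $\eb$ from the desired quantity, so the singular boundary-layer behaviour near $t=0$ must be extracted from an explicit spectral computation rather than from a single clever multiplier.
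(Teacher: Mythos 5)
Your overall Galerkin/energy plan is sound and the first multiplier (testing with $\Dt u$) matches the paper, but two places differ materially.

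For the pointwise control of $\|\Dt u(t)\|^2_{H^{-1}}$, the difficulty you flag is real, but the spectral, mode-by-mode resolution you propose is much heavier than necessary, and in fact a single multiplier does close it. The paper tests with $A^{-1}\Dt u$ but then \emph{does not} keep $(u,\Dt u)=\tfrac12\tfrac{d}{dt}\|u\|^2_H$ as part of the energy; instead it throws $(u,\Dt u)$ and $(A^{-1}F(u),\Dt u)$ to the right-hand side and keeps only the $\Dt u$-energy. This yields
\[
\eb\,\tfrac{d}{dt}\|\Dt u\|^2_{H^{-1}}+\|\Dt u\|^2_{H^{-1}}\le C\bigl(\|u\|^2_{H^1}+\|F(u)\|^2_H\bigr)\le C\|u\|^2_{H^1},
\]
a first-order inequality with \emph{strong} damping of order $\eb^{-1}$. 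Dividing by $\eb$ and integrating with the factor $e^{t/\eb}$ gives
\[
\|\Dt u(t)\|^2_{H^{-1}}\le e^{-t/\eb}\|\Dt u(0)\|^2_{H^{-1}}+C\int_0^t\eb^{-1}e^{-(t-s)/\eb}\|u(s)\|^2_{H^1}\,ds,
\]
and the kernel $\eb^{-1}e^{-(t-s)/\eb}$ has unit $L^1$-mass on $\R_+$, so the second term is bounded by $\sup_{s\le t}\|u(s)\|^2_{H^1}$ uniformly in $\eb$. The boundary-layer structure you want to extract spectrally is exactly what this integrating-factor computation captures, only at the level of a single ODE inequality in $\|\Dt u\|^2_{H^{-1}}$. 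Your mode-by-mode argument would likely work, but it reproduces this computation eigenvalue by eigenvalue.

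The difference-quotient approach to \eqref{1.non-dis-1}, on the other hand, has a genuine gap. Applying the first energy estimate to $v^h=h^{-1}(u(\cdot+h)-u)$ requires controlling $\|\xi_{v^h}(0)\|^2_{\mathcal E_\eb}$, which in the limit $h\to0$ contains $\|\Dt u(0)\|^2_{H^1}$ (with no $\eb$-weight) and $\eb\|\Dt^2u(0)\|^2_H$. From the equation, $\eb\Dt^2u(0)=F(u_0)-u_0'-Au_0$, so $\eb\|\Dt^2u(0)\|^2_H=\eb^{-1}\|F(u_0)-u_0'-Au_0\|^2_H$, which is not controlled by $\|\xi_0\|^2_{\mathcal E_\eb^1}=\eb\|u_0'\|^2_{H^1}+\|u_0\|^2_{H^2}+\|u_0'\|^2_H$ uniformly in $\eb$ (likewise $\|u_0'\|^2_{H^1}$ is only available with an $\eb$-weight). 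So your chain breaks exactly where you assert that the $\mathcal E_\eb$-norm of $\Dt u$ is controlled by $\|\xi_0\|^2_{\mathcal E_\eb}$ (or $\mathcal E_\eb^1$). The paper avoids this by testing the undifferentiated equation with $A\Dt u$, which produces $\tfrac{d}{dt}\bigl(\tfrac{\eb}{2}\|\Dt u\|^2_{H^1}+\tfrac12\|u\|^2_{H^2}-(F(u),Au)\bigr)$; the initial value of this energy \emph{is} controlled by $\|\xi_0\|^2_{\mathcal E_\eb^1}$. The remaining piece $\|\Dt u(t)\|^2_H$ is then obtained by the same strong-damping boundary-layer argument as above, with the multiplier $\Dt u$ in place of $A^{-1}\Dt u$.
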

\begin{proof} We give below only the formal derivation of the stated estimates. Their justification can be done  using e.g., the Galerkin approximation method. The only a bit delicate place is the fact that the map $F$ is not differentiable, so the estimates involving time differentiation of $F(u)$ require some accuracy. This can be overcome (on the level of Galerkin approximations) by approximating the Lipschitz function by smooth ones without expanding the Lipschitz constant, e. g., using the mollification operator. In order to avoid the technicalities, we rest these standard details to the reader.
\par
We start with estimate \eqref{1.non-dis}. Indeed, taking the scalar product of equation \eqref{1.maineq} with $\Dt u$ and using that
$$
\|F(u)\|_{H}=\|F(u)-F(0)\|_{H}\le L\|u\|_H,
$$
we end up with
\begin{equation}
\frac12\frac d{dt}\(\eb\|\Dt u\|^2_H+\|u\|^2_{H^1}\)+\|\Dt u\|^2_H=(F(u),\Dt u)\le L\|u\|_H\|\Dt u\|_H\le \frac12\|\Dt u\|^2_H+\frac12L^2\|u\|^2_H
\end{equation}
and the Gronwall inequality together with the inequality $\|u\|^2_{H}\le \lambda_1^{-1}\|u\|^2_{H^1}$ give
\begin{equation}\label{1.en-part}
\eb\|\Dt u(t)\|^2_H+\|u(t)\|^2_{H^1}+\int_t^{t+1}\|\Dt u(s)\|^2_H\,ds\le Ce^{L^2\lambda_1^{-1}t}\(\eb\|u_0'\|^2_H+\|u_0\|^2_{H^1}\).
\end{equation}
Thus, to complete \eqref{1.non-dis}, we only need to estimate the $H^{-1}$ norm of $\Dt u$. To this end, we multiply equation \eqref{1.maineq} by $A^{-1}\Dt u$ and rewrite it in the form
$$
\frac d{dt}\|\Dt u\|^2_{H^{-1}}+\eb^{-1}\|\Dt u\|^2_{H^{-1}}\le C\eb^{-1}(\|u\|^2_{H^1}+\|F(u)\|^2_H)\le C\eb^{-1}\|u\|^2_{H^1},
$$
where $C$ depends on $L$, but is independent of $\eb\to0$. Integrating this inequality, we get the following boundary layer estimate
\begin{equation}
\|\Dt u(t)\|^2_{H^{-1}}\le \|\Dt u(0)\|^2_{H^{-1}}e^{-\eb^{-1}t}+C\eb^{-1}\int_0^te^{-\eb^{-1}(t-s)}\|u(s)\|^2_{H^1}\, ds
\end{equation}
and this estimate together with \eqref{1.en-part} give the desired estimate \eqref{1.non-dis}.
\par
To obtain the second estimate, we multiply equation \eqref{1.maineq} by $\Dt Au$ and get
\begin{equation}\label{1.dLip}
\frac d{dt}\(\frac12\eb\|\Dt u\|^2_{H^1}+\frac12\|u\|^2_{H^2}-(F(u),Au)\)+\|\Dt u\|^2_{H^1}+(F'(u)\Dt u,Au)=0.
\end{equation}
Introducing $Y(t):=\frac12\eb\|\Dt u(t)\|^2_{H^1}+\frac12\|u(t)\|^2_{H^2}-(F(u(t)),Au(t))$ and using that
$$
|(F(u),Au)|\le L\|u\|_{H^2}\|u\|_H\le L^2\|u\|^2_H+\frac14\|u\|^2_{H^2},
$$
we get
$$
\frac14\(\eb\|\Dt u\|^2_{H^1}+\|u\|^2_{H^2}\)-L\|u\|^2_H\le Y(t)\le \eb\|\Dt u\|^2_{H^1}+\|u\|^2_{H^2}+L\|u\|^2_H.
$$
This estimate, together with another obvious estimate
$$
|(F'(u)\Dt u,Au)|\le L\|\Dt u\|_{H}\|u\|_{H^2}\le \frac12\|\Dt u\|^2_{H^1}+C\|u\|^2_{H^2}
$$
allow us to transform \eqref{1.dLip} as follows
$$
\frac d{dt}Y(t)+\frac12\|\Dt u(t)\|^2_{H^1}\le C Y(t)+C\|u(t)\|^2_H.
$$
Integrating this inequality and using estimate \eqref{1.non-dis}, we arrive at
\begin{equation}\label{1.en-part1}
\eb\|\Dt u(t)\|^2_{H^1}+\|u(t)\|^2_{H^2}+\int_t^{t+1}\|\Dt u(s)\|^2_{H^1}\,ds\le Ce^{Kt}\|\xi_u(0)\|^2_{\Cal E_\eb^1}.
\end{equation}
To complete estimate \eqref{1.non-dis-1}, we only need to estimate the $H$-norm of $\Dt u$. This can be done exactly as in the derivation of \eqref{1.non-dis}, but multiplying equation \eqref{1.maineq} by $\Dt u$ instead of $A^{-1}\Dt u$. Thus, the theorem is proved.
\end{proof}
The proved theorem guarantees that the solution semigroup $S_\eb(t):\mathcal E_\eb\to\mathcal E_\eb$, $t\ge0$, is well defined by
\begin{equation}
S_\eb(t)\xi_0:=\xi_u(t),
\end{equation}
where $u(t)$ is a solution of \eqref{1.maineq} satisfying $\xi_u\big|_{t=0}=\xi_0$. Moreover, as not difficult to show, this semigroup is globally Lipschitz continuous on $\mathcal E_\eb$:
\begin{equation}\label{1.s-eq}
\|S_\eb(t)\xi_1-S_\eb(t)\xi_2\|_{\mathcal E_\eb}\le Ce^{K t}\|\xi_1-\xi_2\|_{\mathcal E_\eb},\ \ \xi_1,\xi_2\in\mathcal E_\eb,
\end{equation}
where the constants $K$ and $C$ are independent of $\eb\to0$.
\begin{remark}\label{Rem1.dis} Note that the conditions imposed on the nonlinearity $F(u)$ do not guarantee problem \eqref{1.maineq} to be dissipative. Indeed, the choice $F(u)=Lu$ is allowed and the solutions $u(t)$ may grow exponentially as $t\to\infty$. To avoid this, the extra dissipativity conditions should be added. Since the dissipativity is not essential for Inertial Manifolds (only the global Lipschitz continuity is crucial for the theory), we do not pose these conditions.
\end{remark}
We now turn to Inertial Manifolds. We start with recalling the definition adapted to our case.
\begin{definition}\label{Def1.IM} A Lipschitz continuous submanifold $\mathcal M$ of the phase space $\mathcal E_\eb$ with the base $P_NH$ (for some fixed $N\in\Bbb N$) is called an Inertial Manifold for problem \eqref{1.maineq} if
\par
1) The manifold $\mathcal M$ is strictly invariant: $S_\eb(t)\mathcal M=\mathcal M$ for $t\ge0$;
\par
2) It possesses an exponential tracking (asymptotic phase) property, namely, for any $\xi_0\in\mathcal E_\eb$ there exists $\bar\xi_0\in\mathcal M$ such that
\begin{equation}
\|S_\eb(t)\xi_0-S_\eb(t)\bar\xi_0\|_{\mathcal E_\eb}\le C\|\xi_0\|_{\mathcal E_\eb}e^{-\theta t},
\end{equation}
where the positive constants $C$ and $\theta$ are independent of $\xi_0$.
\end{definition}
We will construct the Inertial Manifold (IM) for problem \eqref{1.maineq} using the Perron method. Namely, we will use the fact that the IM is generated by all backward in time solutions $u(t)$, $t\le0$, of problem \eqref{1.maineq} which grow not too fast as $t\to-\infty$, i.e., $u\in L^2_{e^{\theta t}}(\R_-,H)$ for some properly chosen $\theta=\theta(N,\eb)$, see \cite{Zel} and Section \ref{s2} below for more details. We recall here that  (for any $V\subset\R$) the norm in the weighted space $L^2_{e^{\theta t}}(V,H)$ is defined by
\begin{equation}\label{1.norm}
\|u\|^2_{L^2_{e^{\theta t}}(V,H)}:=\int_Ve^{2\theta t}\|u(t)\|^2_{H}\,dt.
\end{equation}
In order to understand how to make a choice of $\theta$ and what initial conditions we should impose on the backward in time solutions of \eqref{1.maineq} at $t=0$, we need to investigate the {\it linear} analogue of problem \eqref{1.maineq}. This is done in details in Appendix and here we only mention the basic facts related with the structure of the spectrum of the linear problem which corresponds to $F\equiv0$. Indeed, using the Fourier expansions $u(t)=\sum_{n=1}^\infty u_n(t)e_n$, we see that the homogeneous problem \eqref{1.maineq} with $F\equiv0$ is equivalent to the following uncoupled system of ODEs
\begin{equation}\label{1.seq}
\eb u_n''(t)+u_n'(t)+\lambda_n u_n(t)=0,\ \ n\in\Bbb N,
\end{equation}
and the general solution of this problem reads
\begin{equation}\label{1.sol}
u_n(t)=p_ne^{\mu_n^+t}+q_ne^{\mu_n^-t},\ \ \mu_n^{\pm}:=\frac{-1\pm\sqrt{1-4\eb\lambda_n}}{2\eb},
\end{equation}
where $p_n,q_n\in\R$ (or $p_n,q_n\in\Bbb C$). It is easy to see that
$$
\mu_n^+\to-\lambda_n,\ \ \mu_n^-\to-\infty
$$
as $\eb\to0$, so in the limit case $\eb=0$, we just drop out the term containing $q_n$ in \eqref{1.sol}. We also see that the eigenvalues $\mu_n^+$ (resp. $\mu_n^-$) are decreasing (resp. increasing) in $n$ until they remain real. Namely, this holds for all  $n\le n_{cr}$, where $n_{cr}$ is the maximal natural $n$ satisfying $1-4\eb\lambda_n\ge0$. Therefore,
$$
\mu_{1}^-\le\cdots\le\mu_{n_{cr}}^-\le -\frac1{2\eb}\le\mu_{n_{cr}}^+\le\cdots\le\mu_{1}^+.
$$
 For $n>n_{cr}$ the eigenvalues will be complex conjugate with
$$
\operatorname{Re}\mu_n^\pm=-\frac1{2\eb}.
$$
This structure allows us to make the following observations:
\par
1. There is no hope to construct (at least the normally hyperbolic) IM diffeomorphic to $P_NH$ if $N>n_{cr}$, so we need to pose the condition like $4\eb\lambda_N<1$ to avoid this case. Actually, we will pose slightly stronger restriction that
\begin{equation}\label{1.extra}
(3\lambda_{N+1}+\lambda_N)\eb\le1,
\end{equation}
see the explanations below.
\par
2. If we want to build up the IM utilizing the spectral gap between $\lambda_N$ and $\lambda_{N+1}$, we need to fix the exponent $\theta$ satisfying
$$
-\mu_{N}^+<\theta<-\operatorname{Re}\mu_{N+1}^+.
$$
To specify this choice, we recall that in the limit case $\eb=0$, the optimal choice of this exponent is $\theta=\frac{\lambda_N+\lambda_{N+1}}2$, see \cite{Zel} for the details. The most natural generalization of this formula to the case $\eb\ne0$ would be the following one:
\begin{equation}\label{1.tht}
\theta(\eb\theta-1)+\frac{\lambda_{N}+\lambda_{N+1}}2=0, \ \ \theta=\frac{1-\sqrt{1-2\eb(\lambda_N+\lambda_{N+1})}}{2\eb}.
\end{equation}
As we will see below, this choice is indeed optimal if assumption \eqref{1.extra} holds and gives the sharp condition for the existence of the IM (note that the eigenvalues $\mu_{N+1}^\pm$ are allowed to be complex conjugate). In the case of "the last" spectral gap where
$$
4\eb\lambda_N<1,\ \ (3\lambda_{N+1}+\lambda_N)\eb>1
$$
we expect {\it different} choice of $\theta$ to be optimal, but the investigation of this case is out of scope of the paper.
\par
3. Under the assumptions \eqref{1.extra} and \eqref{1.tht}, we see that, for $n>N$ only zero solution of \eqref{1.seq} belongs to $L^2_{e^{\theta t}}(\R_-)$. In contrast to this, for $n\le N$, we have the family of such solutions parameterized by $p_n\in\R$ since
$$
e^{\mu_n^+ t}\in L^2_{e^{\theta t}}(\R_-),\ \ e^{\mu_n^-t}\notin L^2_{e^{\theta t}}(\R_-).
$$
Thus, we have $N$-dimensional family of backward solutions of
\begin{equation}\label{1.leq}
\eb\Dt^2u+\Dt u+Au=0
\end{equation}
which belong to the space $L^2_{e^{\theta t}}(\R_-,H)$ and which are parameterized by $p=(p_1,\cdots,p_N)\in P_NH$. Since
\begin{equation}\label{21.24}
\frac{u_n'(0)-\mu_n^{-}u_n(0)}{\mu_n^{+}-\mu_n^-}=\frac{\eb}{\sqrt{1-4\eb\lambda_n}}u_n'(0)+
\frac{1+\sqrt{1-4\eb\lambda_n}}{2\sqrt{1-4\eb\lambda_n}}u_n(0)=p_n,
\end{equation}
then, introducing the linear operators $\Cal P_N : H\to H$ and $\widehat{\Cal P}_N: H\to H$ via
\begin{equation}\label{1.eigenpro}
\widehat{\mathcal P}_Nu:=\sum_{n=1}^N\frac{\eb}{\sqrt{1-4\eb\lambda_n}}(u,e_n)e_n,\ \ \
\mathcal P_Nu:=\sum_{n=1}^N\frac{1+\sqrt{1-4\eb\lambda_n}}{2\sqrt{1-4\eb\lambda_n}}(u,e_n)e_n,
\end{equation}
we rewrite the initial data for problem \eqref{1.leq} in the equivalent form
$$
\widehat{\mathcal P}_N\Dt u\big|_{t=0}+\mathcal P_N u\big|_{t=0}=p.
$$
Then, we will have one-to-one correspondence between the backward solutions of \eqref{1.leq} belonging to $L^2_{e^{\theta t}}(\R_-,H)$ and elements $p\in P_NH$, see Appendix for more details. We will essentially use this observation for constructing the IM for problem \eqref{1.maineq}.

\section{Inertial Manifolds}\label{s2}
In this section, we verify the existence of the Inertial Manifold for  the  semilinear equation
\eqref{1.maineq}
in the energy phase space $\mathcal E_\eb$ and study the singular limit $\eb\to0$.
\par
We assume that $F$ is a globally  Lipschitz map with the Lipschitz constant $L$, i.e., assumption \eqref{1.F} is assumed to be satisfied.
The next theorem gives the sufficient conditions for the existence of the IM for the equation \eqref{1.maineq}.

\begin{theorem}\label{Th1.mmain} Let the function $F$ satisfy \eqref{1.F}. Assume also that for some $\eb>0$ and $N\in\mathbb N$ the following spectral gap conditions are satisfied:
\begin{equation}\label{2.sg}
\lambda_{N+1}-\lambda_N>2L,\ \ 3\lambda_{N+1}+\lambda_N\le \frac1\eb.
\end{equation}
Then equation \eqref{1.maineq} possesses an $N$-dimensional Lipschitz IM $\mathcal M=\mathcal M_\eb$ (in the sense of Definition \ref{Def1.IM}) generated by all solutions of this equation which grow backward in time slower than $e^{-\theta t}$ (namely, $u\in L^2_{e^{\theta t}}(\R_-,H)$). Here $\theta$ is the smallest root of the equation
\begin{equation}\label{2.theta}
2\theta(\eb\theta-1)+\lambda_{N+1}+\lambda_N=0.
\end{equation}
As usual, this invariant manifold is generated by the Lipschitz injective map $M:\, P_NH\to\mathcal E_\eb$ and possesses the exponential tracking property $\mathcal E_\eb$.
\end{theorem}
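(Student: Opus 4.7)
\textbf{The plan} is the Perron-style construction in the weighted space $L^2_{e^{\theta t}}(\R_-,H)$. For every $p\in P_NH$ I look for a backward-in-time solution of \eqref{1.maineq} satisfying
$$
\widehat{\mathcal P}_N\Dt u\big|_{t=0}+\mathcal P_N u\big|_{t=0}=p,\qquad u\in L^2_{e^{\theta t}}(\R_-,H),
$$
and then declare $M(p):=\xi_u(0)$, $\mathcal M_\eb:=M(P_NH)$. The core is the analysis of the linear problem $\eb u''+u'+Au=h$ on $\R_-$ with the same projection condition. Using Parseval and the change of unknown $U:=e^{\theta t}u$, this reduces mode-by-mode to a resolvent estimate for the symbol $\lambda_n-\theta+\eb\theta^2-\eb\omega^2+i\omega(1-2\eb\theta)$ of the shifted operator. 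The choice of $\theta$ in \eqref{2.theta} is precisely the one for which $\lambda_n-\theta+\eb\theta^2$ equals $(\lambda_n-\lambda_{N+1})/2+(\lambda_n-\lambda_N)/2$, so the symbol has modulus $(\lambda_{N+1}-\lambda_N)/2$ at $\omega=0$ for the two worst modes $n\in\{N,N+1\}$. A direct algebraic check shows this $\omega=0$ value is actually the global minimum in $\omega$ provided $\eb(3\lambda_{N+1}+\lambda_N)\le 1$, which is exactly the second assumption of \eqref{2.sg}. All the remaining modes give strictly larger denominators, again thanks to this same assumption and the enumeration of the $\mu_n^{\pm}$ discussed in Section \ref{s1}. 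The outcome is a sharp estimate
$$
\|u\|_{L^2_{e^{\theta t}}(\R_-,H)}\le \frac{2}{\lambda_{N+1}-\lambda_N}\|h\|_{L^2_{e^{\theta t}}(\R_-,H)}+C\|p\|_H,
$$
together with the one-to-one correspondence between backward solutions in the weighted space and pairs $(p,h)$ that is already foreshadowed by \eqref{21.24} and \eqref{1.eigenpro}.

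\textbf{Fixed point and the manifold.} Define $\mathcal T_p[v]:=u$ where $u$ is the linear solution above with $h:=F(v)$; since $F(0)=0$, this maps $L^2_{e^{\theta t}}(\R_-,H)$ into itself. The Lipschitz property of $F$ combined with the sharp linear estimate gives
$$
\|\mathcal T_p[v_1]-\mathcal T_p[v_2]\|_{L^2_{e^{\theta t}}(\R_-,H)}\le \frac{2L}{\lambda_{N+1}-\lambda_N}\|v_1-v_2\|_{L^2_{e^{\theta t}}(\R_-,H)},
$$
which is a strict contraction under the first condition in \eqref{2.sg}. Banach's theorem furnishes a unique fixed point $u_p$, and contraction gives $p\mapsto u_p$ Lipschitz; combined with the standard trace inequality $\|\xi_u(0)\|_{\mathcal E_\eb}\le C(\|u\|_{L^2_{e^{\theta t}}(\R_-,H)}+\|F(u)\|_{L^2_{e^{\theta t}}(\R_-,H)})$ (which follows in a boundary-layer form from the calculations behind Theorem \ref{Th1.well}), this gives the Lipschitz map $M:P_NH\to\mathcal E_\eb$, hence the manifold. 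Strict invariance $S_\eb(t)\mathcal M_\eb=\mathcal M_\eb$ is immediate from the shift invariance of the weighted space: if $u_p$ is a backward solution of \eqref{1.maineq} in $L^2_{e^{\theta t}}(\R_-,H)$, so is $u_p(\cdot+s)$ for every $s\in\R$, and uniqueness in Step~1 then identifies the translated profile with $u_{p(s)}$, where $p(s):=\widehat{\mathcal P}_N\Dt u_p(s)+\mathcal P_Nu_p(s)$ is the new parameter.

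\textbf{Exponential tracking.} Given $\xi_0\in\mathcal E_\eb$ with forward trajectory $w(t)$, I look for $\bar\xi_0\in\mathcal M_\eb$ as a forward fixed point: write $\bar u=w+z$ with $z$ in the forward weighted space $L^2_{e^{-\theta t}}(\R_+,H)$ and impose that $Q_N$ of the initial data is free while matching the manifold constraint at $t=0$. The linear solution operator on $\R_+$, analyzed identically to Step~1, yields the same contraction factor $2L/(\lambda_{N+1}-\lambda_N)<1$, producing the tracking trajectory with rate $\theta$. \textbf{Main obstacle.} Every step but the linear estimate is routine; the technically delicate part is realizing the \emph{sharp} constant $2/(\lambda_{N+1}-\lambda_N)$ in the resolvent bound, and in particular checking that the worst frequency for $n=N+1$ is $\omega=0$ rather than some nonzero value forced by the hyperbolic term $\eb\omega^2$. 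This is the exact role of $\eb(3\lambda_{N+1}+\lambda_N)\le 1$: it is the precise threshold at which the quadratic $(a-\eb\omega^2)^2+b^2\omega^2$ (with $a=(\lambda_{N+1}-\lambda_N)/2$ and $b^2=1-2\eb\theta)^2=1-2\eb(\lambda_N+\lambda_{N+1})$ from \eqref{2.theta}) is monotone non-decreasing in $\omega^2$ on $[0,\infty)$.
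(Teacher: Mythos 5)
Your core construction is the same as the paper's: a Perron-style fixed point in $L^2_{e^{\theta t}}(\R_-,H)$ with the projection condition $\widehat{\mathcal P}_N\Dt u(0)+\mathcal P_Nu(0)=p$, driven by the sharp linear estimate $\|u\|\le\frac{2}{\lambda_{N+1}-\lambda_N}\|h\|+C\|p\|$ obtained by Fourier transform in $t$, mode-by-mode; your algebra for why the worst frequency for $n\in\{N,N+1\}$ sits at $\omega=0$ exactly under $\eb(3\lambda_{N+1}+\lambda_N)\le1$ is the same calculation the paper performs (Cases I/II in the proof of Lemma~\ref{Lem1.1}). You also identify the correct role of condition \eqref{2.theta}. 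Two points are weaker than they should be.

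First, your estimate $\|\xi_u(0)\|_{\mathcal E_\eb}\le C(\|u\|_{L^2_{e^{\theta t}}(\R_-,H)}+\|F(u)\|_{L^2_{e^{\theta t}}(\R_-,H)})$ is not quite enough to control the $H^{-1}$ component of $\Dt u$ through the boundary layer: the paper's Corollary~\ref{CorA.ee} needs in addition an $L^\infty_{e^{\theta t}}(\R_-,H^{-1})$ bound on $h=F(u)$ (which does follow from the $C_{e^{\theta t}}$-bound on $u$ in Corollary~\ref{CorA.7}, but that extra step is not a pure $L^2$ trace estimate). You also gloss over the case $n>N+1$ (your ``remaining modes give strictly larger denominators''); there the quadratic $(a-\eb\omega^2)^2+b^2\omega^2$ may dip at nonzero $\omega$, and showing the vertex value still dominates $(\lambda_{N+1}-\lambda_N)^2/4$ is precisely where $3\lambda_{N+1}+\lambda_N\le1/\eb$ is used a second time (paper's Case~II).

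Second, and more substantially, your exponential-tracking argument has a genuine gap. You write $\bar u=w+z$ on $\R_+$ and wave at ``matching the manifold constraint at $t=0$.'' That constraint, $\xi_{\bar u}(0)\in\mathcal M_\eb$, is nonlinear (it involves the unknown graph map $M$) and does not reduce to a clean contraction in $z$ alone; you would have to treat the base-point $\bar p$ as an additional unknown coupled to $z$ and close a joint fixed point using the Lipschitz constant of $M$, which your sketch does not set up. The paper sidesteps this entirely with the cut-off trick: it seeks $w(t)=\varphi(t)u(t)+v(t)$ with $v\in L^2_{e^{\theta t}}(\R,H)$ on the \emph{whole} line, so the contraction runs in one unconstrained Banach space, the backward half of $v$ automatically lands $w$ on $\mathcal M_\eb$, and the forward half gives the decay rate. (Also, your ``$L^2_{e^{-\theta t}}(\R_+,H)$'' appears to have the sign of the weight reversed relative to the paper's convention \eqref{1.norm}; with that convention the decaying class on $\R_+$ is $L^2_{e^{\theta t}}(\R_+,H)$.)

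Net: same method and the key linear estimate correctly understood, but the tracking step as written does not close.
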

\begin{proof} We seek for the desired backward solutions of equation \eqref{1.maineq} as solutions of problem:
\begin{equation}\label{2.25}
\eb\Dt^2 u+\Dt u+Au=F(u),\ \ \widehat{\Cal P}_N \Dt u(0)+\Cal P_N u(0)=p\in P_NH,\ \ t\le0,
\end{equation}
which belong to the space $\Cal H_\theta^-:=L^2_{e^{\theta t}}(\R_-,H)$. Recall that the operators $\widehat {\mathcal P}_N$ and $\mathcal P_N$ are defined by \eqref{1.eigenpro}. Our first task is to verify that this problem possesses indeed a unique solution for any $p\in P_NH$ and that this solution depends in a Lipschitz continuous way on the parameter $p$. To this end, we use the Banach contraction theorem in the space $\mathcal H_\theta^-$. Namely, according to Lemma
\ref{LemA.lin}, for every $p\in P_NH$ and $h\in\mathcal H_\theta^-$, problem
\begin{equation}\label{2.lin}
\eb\Dt^2 v+\Dt v+A v=h(t),\ \ t\le 0,\ \ \widehat{\mathcal P}_N\Dt v\big|_{t=0}+\mathcal P_N v\big|_{t=0}=p,
\end{equation}
possesses a unique solution $v\in\Cal H_\theta^-$. We denote the solution (linear) operator for this problem by $\mathbb L$ (i.e., $v:=\mathbb L(h,p)$, see Lemma \ref{LemA.lin}). Then, due to \eqref{A.kest}, we have the following estimate:
\begin{equation}\label{2.kest}
\|\mathbb L(h,p)\|_{\Cal H_\theta^-}\le \frac2{\lambda_{N+1}-\lambda_N}\|h\|_{\Cal H_\theta^-}+C\|p\|_{H},
\end{equation}
where $C$ is independent of $\eb\to0$. Thus, equation \eqref{2.25} can be rewritten as a fixed point problem
\begin{equation}\label{2.fix}
u=\mathbb L(F(u),p),\ \ p\in P_NH,
\end{equation}
in the space $\Cal H_\theta^-$. We claim that the right-hand side of \eqref{2.fix} is a contraction on $\mathcal H_\theta^-$. Indeed, due to \eqref{2.kest} and \eqref{1.F},
\begin{multline}
\|\mathbb L(F(u_1),p)-\mathbb L(F(u_2),p)\|_{\Cal H_\theta^-}=\|\mathbb L(F(u_1)-F(u_2),0)\|_{\Cal H_\theta^-}\le\\\le\frac2{\lambda_{N+1}-\lambda_N}\|F(u_1)-F(u_2)\|_{\Cal H_\theta^-}\le \frac{2L}{\lambda_{N+1}-\lambda_N}\|u_1-u_2\|_{\Cal H_\theta^-}
\end{multline}
and this map is a contraction due to the first assumption of \eqref{2.sg}. Thus, equation \eqref{2.fix} and which is the same, equation \eqref{2.25}  are uniquely solvable for every $p\in P_NH$ and the solution map $U: P_NH\to\Cal H_\theta^-$ is well defined and Lipschitz continuous in $p$ (since $\mathbb L$ is Lipschitz and even linear in $p$). We are now ready to define the map $M: P_NH\to\Cal E_\eb$ which generates the desired IM via the expression
\begin{equation}\label{2.M}
M(p)=\(\begin{matrix}\Pi_1M(p)\\\Pi_2M(p)\end{matrix}\):=\(\begin{matrix} U(p)\big|_{t=0}\\\Dt U(p)\big|_{t=0}\end{matrix}\).
\end{equation}
Let us verify that this map is well-defined. Indeed, since $u=U(p)$ satisfies \eqref{2.25} and the right-hand side $h(t):=F(u(t))\in \mathcal H_\theta^-$, we have
$$
\|h\|_{\mathcal H_\theta^-}\le L\|u\|_{\mathcal H_\theta^-}\le C\|p\|_H
$$
and, due to Corollary \ref{CorA.7}, $\Dt u\in \mathcal H_\theta^-$ and
\begin{equation}\label{2.DtF}
\|\Dt h\|_{\mathcal H_\theta^-}\le L\|\Dt u\|_{\mathcal H_\theta^-}\le C\|p\|_H.
\end{equation}
Then, due to Corollary \ref{CorA.ee-reg},
\begin{equation}\label{2.im-reg}
\|M(p)\|_{\Cal E^1_\eb}=\|u(0)\|_{\mathcal E^1_\eb}\le C\(\|h\|_{W^{1,2}_{e^{\theta t}}(\R_-,H)}+\|p\|_H\)\le C\|p\|_H.
\end{equation}
Thus, the map $M: P_NH\to \mathcal E^1_\eb\subset \mathcal E_\eb$ is well-defined. Let us prove the Lipschitz continuity. Indeed, let $p_1,p_2\in P_NH$ and $u_i:=U(p_i)$, $i=1,2$, be the corresponding backward solutions. We set $\bar p:=p_1-p_2$, $\bar u:=u_1-u_2$ and $\bar h:=F(u_1)-F(u_2)$. Then, since the map $U$ is Lipschitz in $p$, we have
$$
\|\bar h\|_{\mathcal H_\theta^-}\le L\|\bar u\|_{\mathcal H_\theta^-}\le C\|\bar p\|_H.
$$
Moreover, since the function $\bar u$ solves \eqref{2.lin} (with $v$, $p$ and $h$ replaced by $\bar u$, $\bar p$ and $\bar h$ respectively), due to Corollary \ref{CorA.7}, $\bar u\in C_{e^{\theta t}}(\R_-,H)$ and, therefore, $\bar h\in L^\infty_{e^{\theta t}}(\R_-,H)$. Then, Corollary \ref{CorA.ee} gives
\begin{equation}\label{2.IM-lip}
\|M(p_1)-M(p_2)\|_{\mathcal E_\eb}=\|\bar u(0)\|_{\mathcal E_\eb}\le C\|p_1-p_2\|_H,
\end{equation}
where the constant $C$ is independent of $\eb\to0$. Thus, the well-posedness of the map $M$ and its Lipschitz continuity is verified. Moreover, by the construction of this map,
\begin{equation}
\widehat{\mathcal P}_N \Pi_2M(p)+\mathcal P_N \Pi_1M(p)=\widehat{\mathcal P}_N \Dt U(p)\big|_{t=0}+\mathcal P_N U(p)\big|_{t=0}\equiv p,\ \ \forall p\in P_NH.
\end{equation}
Therefore, the left inverse to this map exists and also uniformly Lipschitz continuous. By this reason, the set $\mathcal M:=M(P_NH)$ is a Lipschitz submanifold of $\mathcal E_\eb$, see also Remark \ref{Rem3.graph} below. The invariance of this manifold with respect to the solution semigroup generated by equation \eqref{1.maineq} follows from the definition of the map $M$.
\par
Thus, in order to verify that $\mathcal M$ is indeed an inertial manifold and finish the proof of the theorem, it is sufficient to verify the exponential tracking property. Let $\xi_u\in C(\R_+,\mathcal E_\eb)$ be an arbitrary solution of problem \eqref{1.maineq}. Following \cite{Zel}, we introduce the smooth cut-off function $\varphi(t)$ such that $\varphi(t)\equiv0$ for $t\le0$ and $\varphi(t)\equiv 1$ for $t\ge1$ and seek for the desired solution $w(t)\in\mathcal M$, $t\in\R$ in the form
$$
w(t)=\varphi(t)u(t)+v(t),
$$
where the function $v\in L^2_{e^{\theta t}}(\R,H)$. Then, $w(t)\equiv v(t)$ for $t\le0$ and the fact that $w\in\mathcal M$ will be guaranteed by the fact that $v\in L^2_{e^{\theta t}}(\R_-,H)$. On the other hand, $v(t)=w(t)-u(t)$ for $t\ge1$ and the fact that $v\in L^2_{e^{\theta t}}(\R_+,H)$ together with Corollary \ref{CorA.ee}, will imply the desired exponential tracking estimate
\begin{equation}
\|u(t)-w(t)\|_{\mathcal E_\eb}\le Ce^{-\theta t}.
\end{equation}
Thus, we only need to construct the function $v$ with the above properties. Since $w$ is also a solution of \eqref{1.maineq}, this function should satisfy the equation
\begin{equation}\label{2.exp}
\eb\Dt^2 v+\Dt v+Av=F(\varphi u+v)-\varphi F( u)-(\eb\varphi''+\varphi')u-2\eb\varphi'\Dt u:=\Phi(v,u),\ \ t\in\R.
\end{equation}
We want to apply the Banach contraction principle to this equation. To this end, we note that, for any $v\in L^2_{e^{\theta t}}(\R,H)$, the function $\Phi (v,u)\in L^2_{e^{\theta t}}(\R,H)$. Indeed, for $t\le 0$, $\Phi(v,u)=F(v)$ and
$$
\|\Phi(v,u)\|_{L^2_{e^{\theta t}}(\R_-,H)}\le \|F(v)\|_{L^2_{e^{\theta t}}(\R_-,H)}\le L\|v\|_{L^2_{e^{\theta t}}(\R,H)}
$$
(here we have implicitly used that $F(0)=0$). On the other hand, for $t\ge1$, $\Phi(v,u)=F(u+v)-F(u)$ and
$$
\|\Phi(v,u)\|_{L^2_{e^{\theta t}}(\{t\ge1\},H)}\le \|F(u+v)-F(u)\|_{L^2_{e^{\theta t}}(\{t\ge1\},H)}\le L\|v\|_{L^2_{e^{\theta t}}(\R,H)}.
$$
Finally, for $0\le t\le1$, using the fact that $u\in C(\R_+,\mathcal E_\eb)$, we see that $\Phi(v,u)\in L^2([0,1],H)$. This guarantees that $\Phi(v,u)\in L^2_{e^{\theta t}}(\R,H)$ if $v\in L^2_{e^{\theta t}}(\R,H)$. Moreover, for $v_1,v_2\in L^2_{e^{\theta t}}(\R,H)$, we have
$$
\|\Phi(v_1,u)-\Phi(v_2,u)\|_{L^2_{e^{\theta t}}(\R,H)}=\|F(\varphi u+v_1)-F(\varphi u+v_2)\|_{L^2_{e^{\theta t}}(\R,H)}\le L\|v_1-v_2\|_{L^2_{e^{\theta t}}(\R,H)}.
$$
Using now Lemma \ref{Lem1.1}, we rewrite equation \eqref{2.exp} in the equivalent form
$$
v=\mathcal L(\Phi(v,u)),\ \ v\in L^2_{e^{\theta t}}(\R,H),
$$
where the solution operator $\mathcal L$ is defined in Lemma \ref{Lem1.1}. Then, due to estimate \eqref{1.7} and the spectral gap condition \eqref{2.sg}, the function $v\to \mathcal L(\Phi(v,u))$ is a contraction on $L^2_{e^{\theta t}}(\R,H)$:
\begin{multline}
\|\mathcal L(\Phi(v_1,u)-\Phi(v_2,u))\|_{L^2_{e^{\theta t}}(\R,H)}\le\\\le \frac{2}{\lambda_{N+1}-\lambda_N}\|\Phi(v_1,u)-\Phi(v_2,u)\|_{L^2_{e^{\theta t}}(\R,H)}\le \frac{2L}{\lambda_{N+1}-\lambda_N}\|v_1-v_2\|_{L^2_{e^{\theta t}}(\R,H)}.
\end{multline}
Thus, the desired $v$ exists by Banach contraction theorem and the theorem is proved.
\end{proof}
\begin{remark}\label{Rem3.graph} Note that the map $\tilde {\mathcal P}_N:\mathcal E_\eb\to P_NH$ defined by $\tilde {\mathcal P}_N(u,v):=\hat{\Cal P}_Nv+\Cal P_Nu$ and  restricted to the subspace
$$
\mathcal H^+_N:=\{(u,v)\in P_N\mathcal E_\eb,\ \ (v,e_n)=\mu^+_n(u,e_n),\ n=1,\cdots,N\}
$$
is one-to-one. As not difficult to see, the left inverse is given by
$$
\tilde{\mathcal P}_N^{-1}p:=\(p,\sum_{n=1}^N\mu_n^+(p,e_n)e_n\)\in\mathcal H^+_N
$$
and the operator $\Bbb P_N:=\tilde{\mathcal P}_N^{-1}\circ\tilde{\mathcal P}_N:\mathcal E_\eb\to\Cal H^+_N$ is a projector. The kernel of this projector is given by
$$
\mathcal H_N^-:=\{(u,v)\in P_N\mathcal E_\eb,\ (v,e_n)=\mu_n^-(u,e_n),\ n=1,\cdots,N\}\oplus Q_N\mathcal E_\eb
$$
and the phase space $\mathcal E_\eb$ is split into a direct sum (but not orthogonal if $\eb\ne0$):
$$
\mathcal E_\eb=\mathcal H_N^+\oplus\mathcal H_N^-.
$$
Thus, analogously to the standard theory, the manifold $\mathcal M_\eb$ is a graph of the Lipschitz function $\mathbb M_\eb:\mathcal H_N^+\to\mathcal H_N^-$ given by
$$
\mathbb M_\eb(\xi_+):=\mathbb Q_NM_\eb(\tilde{\mathcal P}_N\xi_+),\ \ \xi_+\in\mathcal H_N^+,
$$
where $\mathbb Q_N:=1-\mathbb P_N$ is the projector to the space $\mathcal H_N^-$. Since all the maps $\mathbb P_N$, $\mathbb Q_N$, $\tilde{\mathcal P}_N$ are smooth as $\eb\to0$, we only need to study the dependence of the map $M_\eb$ on $\eb$.
\end{remark}
The rest of this section is devoted to the dependence of the constructed IMs on the parameter $\eb$. We are mainly interested in the parabolic singular limit $\eb\to0$. Note that all of the estimates used in the proof of Theorem \ref{Th1.mmain} are uniform with respect to $\eb\to0$ and, therefore, the result on the existence of the IM holds for the case $\eb=0$ as well. In this case, the IM $\mathcal M_0$ is generated by the solutions of the limit parabolic problem
\begin{equation}\label{2.par}
\Dt u+Au=F(u),\ \ P_Nu\big|_{t=0}=p,\ \ t\le0
\end{equation}
(since $\widehat {\Cal P}_N\to0$ and $\Cal P_N\to P_N$ as $\eb\to0$) belonging to the space $L^2_{e^{\theta t}}(\R_-,H)$, where
$$
\theta=\theta_0=\frac{\lambda_{N+1}+\lambda_N}2.
$$
The function $M(p)=M_0(p)$ is then defined by the same formula \eqref{2.M}. Obviously in this case the $\Dt u$ component of $M$ can be determined by the $u$ one using the equation:
\begin{equation}
\Pi_2M_0(p)=F(\Pi_1M_0(p))-A\Pi_1M_0(p).
\end{equation}
However, for comparison with the cases $\eb>0$, we prefer to keep both components for the limit case as well. The next theorem measures the distance between the manifolds $\mathcal M_\eb$ and $\mathcal M_0$.
\begin{theorem}\label{Th2.main1} Let the assumptions of Theorem \ref{Th1.mmain} hold. Then, the following estimate between the manifolds $\mathcal M_\eb$ and $\mathcal M_0$ (which correspond to the values of the parameter $\eb$ and $0$ respectively) is valid:
\begin{equation}\label{2.e-lip}
\|M_\eb(p)-M_0(p)\|_{\mathcal E_\eb}\le C\eb\|p\|_H,
\end{equation}
where the constant $C$ is independent of $\eb$ and $p$.
\end{theorem}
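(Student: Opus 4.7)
The plan is to adapt the contraction scheme from the proof of Theorem \ref{Th1.mmain} to the difference $w:=U_\eb(p)-U_0(p)$ of the two backward trajectories that define $M_\eb(p)$ and $M_0(p)$. Setting $u_\eb:=U_\eb(p)$, $u_0:=U_0(p)$ and using $\Dt u_0+Au_0=F(u_0)$, the function $w$ satisfies the linear hyperbolic problem
$$
\eb\Dt^2w+\Dt w+Aw=[F(u_\eb)-F(u_0)]-\eb\Dt^2 u_0,\qquad t\le 0,
$$
together with the inhomogeneous initial condition $\widehat{\mathcal P}_N\Dt w(0)+\mathcal P_Nw(0)=-\widehat{\mathcal P}_N\Dt u_0(0)-(\mathcal P_N-P_N)u_0(0)$, obtained by subtracting the two choices of initial datum. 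A first-order expansion of $(1-4\eb\lambda_n)^{1/2}$ in \eqref{1.eigenpro} gives $\|\widehat{\mathcal P}_N\|_{H\to H}=O(\eb)$ and $\|\mathcal P_N-P_N\|_{P_NH\to P_NH}=O(\eb)$, so together with the bound $\|u_0(0)\|_H+\|\Dt u_0(0)\|_H\le C\|p\|_H$ from \eqref{2.im-reg} at $\eb=0$, this initial datum is already $O(\eb)\|p\|_H$.

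Inverting the linear part with $\mathbb L_\eb$ from Lemma \ref{LemA.lin} and absorbing the Lipschitz piece $\|\mathbb L_\eb([F(u_\eb)-F(u_0)],0)\|_{\mathcal H_\theta^-}\le\frac{2L}{\lambda_{N+1}-\lambda_N}\|w\|_{\mathcal H_\theta^-}$ via the spectral gap condition \eqref{2.sg} (just as in the proof of Theorem \ref{Th1.mmain}), the problem reduces to showing $\|\mathbb L_\eb(-\eb\Dt^2u_0,\mathrm{IC})\|_{\mathcal H_\theta^-}\le C\eb\|p\|_H$, and then lifting this weighted $L^2$-estimate to the pointwise $\mathcal E_\eb$-bound on $w(0)$ through Corollary \ref{CorA.ee}, which is the same trace step that closes the proof of Theorem \ref{Th1.mmain}.

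The principal difficulty is the forcing $-\eb\Dt^2u_0$: a direct $H$-norm estimate via $\Dt^2u_0=F'(u_0)\Dt u_0-A\Dt u_0$ would demand $\Dt u_0\in L^2_{e^{\theta t}}(\R_-,H^2)$, which is unavailable for merely Lipschitz $F$. I plan to sidestep this in two moves. First, exploit the invariance of $\mathcal M_0$: since $(u_0(t),\Dt u_0(t))=M_0(P_Nu_0(t))$ for every $t\le 0$, estimate \eqref{2.im-reg} at $\eb=0$ applied to the shifted trajectory gives $\|u_0(t)\|_{H^2}+\|\Dt u_0(t)\|_H\le C\|u_0(t)\|_H$; integrating against $e^{2\theta t}$ upgrades this to $u_0\in L^2_{e^{\theta t}}(\R_-,H^2)$ and $\Dt u_0\in\mathcal H_\theta^-$, both with norms $\le C\|p\|_H$. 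Second, rewrite $-\eb\Dt^2u_0=-\Dt(\eb\Dt u_0)$ and integrate by parts in the Fourier/Green's-kernel representation of $\mathbb L_\eb$ from the Appendix, transferring the time derivative onto the kernel. The interior contribution becomes $\mathbb L_\eb$ applied to $\eb\Dt u_0$, bounded by $C\eb\|p\|_H$ via an analogue of \eqref{2.kest} for the differentiated kernel (the Parseval argument behind Lemma \ref{LemA.lin} produces the same spectral-gap constant), while the boundary term at $t=0$ is of size $\eb\|\Dt u_0(0)\|_H\le C\eb\|p\|_H$. Collecting everything and invoking Corollary \ref{CorA.ee} produces \eqref{2.e-lip}.
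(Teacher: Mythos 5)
Your decomposition $w:=U_\eb(p)-U_0(p)$, the linear problem with forcing $[F(u_\eb)-F(u_0)]-\eb\Dt^2u_0$, the $O(\eb)$ bound on the perturbed datum $\hat p_\eb$, and the absorption of the Lipschitz term via \eqref{2.kest} all agree with the paper's proof. But the ``principal difficulty'' you set out to avoid does not exist, and the integration-by-parts workaround you build around it is both unnecessary and imprecise.

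The assertion that controlling $\Dt^2u_0$ in $L^2_{e^{\theta t}}(\R_-,H)$ ``would demand $\Dt u_0\in L^2_{e^{\theta t}}(\R_-,H^2)$, which is unavailable for merely Lipschitz $F$,'' reverses the logic. Differentiating the limit parabolic equation in time yields $\Dt v+Av=\Dt F(u_0)$ for $v:=\Dt u_0$, and $\|\Dt F(u_0)\|_{\mathcal H_\theta^-}\le L\|\Dt u_0\|_{\mathcal H_\theta^-}\le C\|p\|_H$ needs only Lipschitz $F$, exactly as in \eqref{2.DtF}. The linear a priori estimate \eqref{1.sm-min}, in the form of Corollary \ref{CorA.reg}, then gives $\|\Dt^2 u_0\|_{L^2_{e^{\theta t}}(\R_-,H)}\le C\|p\|_H$ directly, and Corollary \ref{CorA.ee-reg} supplies the accompanying $L^\infty_{e^{\theta t}}(\R_-,H^{-1})$ bound on $\Dt^2u_0$; the $H^2$-regularity of $\Dt u_0$ is a \emph{byproduct} of this estimate, not a prerequisite. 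That is precisely the paper's route. Your detour also asserts that the ``Parseval argument \dots produces the same spectral-gap constant'' for the differentiated kernel; this is false. The relevant symbol is $\mu\,R_n(\mu)^{-1}$, not $R_n(\mu)^{-1}$, whose supremum over $\mu$ is an $N$-dependent $O(1)$ quantity of the order $(1-4\eb\lambda_N)^{-1/2}$, not $2/(\lambda_{N+1}-\lambda_N)$; the resulting $O(\eb)$ bound still holds, but not for the reason stated. Finally, two steps you omit are needed to close the argument: the trace inequality of Corollary \ref{CorA.ee} requires an $L^\infty_{e^{\theta t}}(\R_-,H^{-1})$ bound on the \emph{entire} forcing, including $F(u_\eb)-F(u_0)$ (the paper obtains this by first upgrading $\hat u_\eb$ from $L^2$ to $L^\infty$ via Corollary \ref{CorA.7}); and $u_0$ naturally lives in $\mathcal H_{\theta_0}^-$ with $\theta_0=(\lambda_N+\lambda_{N+1})/2$ whereas the $\eb$-contraction uses $\theta_\eb$ from \eqref{2.theta}, so one must observe $\theta_\eb>\theta_0$ to embed $\mathcal H_{\theta_0}^-\subset\mathcal H_{\theta_\eb}^-$ uniformly in $\eb$.
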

\begin{proof} Let $u_\eb:=U_\eb(p)$ and $u_0:=U_0(p)$ be the backward solutions of problems \eqref{2.25} and \eqref{2.par} respectively. Then, the difference $\hat u_\eb(t):=u_\eb(t)-u_0(t)$ solves
\begin{equation}\label{2.sing}
\begin{cases}
\eb\Dt^2\hat u_\eb+\Dt\hat u_\eb+A\hat u_\eb=[F(u_\eb)-F(u_0)]-\eb \Dt^2 u_0(t),\\ \widehat{\Cal P}_N^\eb\Dt\hat u_\eb(0)+\Cal P_N^\eb \hat u_\eb(0)=-\widehat{\Cal P}_N^\eb \Dt u_0(0)-[\Cal P_N^\eb-P_N]u_0(0):=\hat p_\eb.
\end{cases}
\end{equation}
Note that the operators $\mathcal P_N$ and $\widehat{\mathcal P}_N$ commute with the projector $P_N$ and are smooth with respect to $\eb\to0$ (see formulas \eqref{1.eigenpro}). Using this fact together with \eqref{2.im-reg} for $\eb=0$, we get
\begin{equation}\label{2.phat}
\|\hat p_\eb\|_{H}\le C\eb\|P_N\Dt u_0(0)\|_{H}+C\eb\|P_Nu_0(0)\|_H\le C\eb\|p\|_{H}.
\end{equation}
Let us now estimate the term $\Dt^2u_0$. First, due to estimate \eqref{2.DtF} with $\eb=0$, we have
$$
\|\Dt F(u_0)\|_{\Cal H_\theta^-}+\|F(u_0)\|_{\mathcal H_\theta^-}\le C\|p\|_H.
$$
Moreover, due to Corollary \ref{CorA.ee-reg}, we have
$$
\Dt u_0\in L^\infty_{e^{\theta t}}(\R_-,H)
$$
and, consequently,
$$
e^{\theta t}\|\Dt F(u_0(t))\|_H\le Le^{\theta t}\|\Dt u_0(t)\|_{H}\le C\|p\|_H.
$$
Thus, applying Corollary \ref{CorA.ee-reg} again, we arrive at
\begin{equation}\label{2.sing-n}
\|\Dt^2u_0\|_{L^2_{e^{\theta t}}(\R_-,H)}+\|\Dt^2 u_0\|_{L^\infty_{e^{\theta t}}(\R_-,H^{-1})}\le C\|p\|_H.
\end{equation}
Note also that the limit function $u_0\in \mathcal H_{\theta}^-$ with $\theta=\theta_0=(\lambda_{N+1}+\lambda_N)/2$ and the function $u_\eb$ lives in $\mathcal H_\theta^-$ with $\theta=\theta_\eb\ne\theta_0$ satisfying \eqref{2.theta}. However, according to \eqref{2.theta}
$$
\theta_\eb=\frac{\lambda_{N+1}+\lambda_N}2+\eb\theta_\eb>\frac{\lambda_{N+1}+\lambda_N}2=\theta_0
$$
and we have the uniform (with respect to $\eb\to0$) embedding
$$
 \mathcal H_{\theta_0}^-\subset\mathcal H_{\theta_\eb}^-.
$$
Thus, estimate \eqref{2.sing-n} remains valid if we replace $\Cal H_{\theta_0}^-$ by $\Cal H_{\theta_\eb}^-$ in it.
\par
We are now ready to finish the proof of the thoerem. Indeed, applying estimate \eqref{A.kest} to equation \eqref{2.sing}, we get
\begin{multline}
\|\hat u_\eb\|_{\Cal H_\theta^-}\le \frac{2}{\lambda_{N+1}-\lambda_N}\|F(u_\eb)-F(u_0)\|_{\Cal H_\theta^-}+C\eb\|\Dt^2 u_0\|_{\Cal H_\theta^-}+C\|\hat p_\eb\|_H\le\\\le \frac{2L}{\lambda_{N+1}-\lambda_N}\|\hat u_\eb\|_{\Cal H_\theta^-}+C\eb\|p\|_{H}
\end{multline}
and using the spectral gap condition \eqref{2.sg}, we arrive at
\begin{equation}
\|\hat u_\eb\|_{\Cal H_\theta^-}\le C\eb\|p\|_{H},
\end{equation}
where the constant $C$ is independent of $\eb$. Thus,
$$
\|F(u_\eb)-F(u_0)\|_{\Cal H_\theta^-}\le CL\|\hat u_\eb\|_{\Cal H_\theta^-}\le C\eb\|p\|_H
$$
and Corollary \ref{CorA.7} now implies the estimate for the $L^\infty$-norm of $\hat u_\eb$ which in turn improves the previous estimate and gives that
$$
\|F(u_\eb)-F(u_0)\|_{L^\infty_{e^{\theta t}}(\R_-,H)}\le CL\|\hat u_\eb\|_{L^\infty_{e^{\theta t}}(\R_-,H)}\le C\eb\|p\|_H.
$$
Finally, applying  Corollary \ref{CorA.ee} to equation \eqref{2.sing}, we get
$$
\|\xi_{\hat u_\eb}(0)\|_{\Cal E_\eb}\le C\eb\|p\|_{H}
$$
which gives the desired estimate \eqref{2.e-lip} and finishes the proof of the theorem.
\end{proof}
\section{Concluding remarks}\label{s3}
In this concluding section, we discuss some applications and generalizations of the proved results. We start with extra smoothness and normal hyperbolicity of the constructed IMs.
\par
\subsection{Smoothness and normal hyperbolicity} Recall that we have posed only global Lipschitz continuity assumption on the non-linearity $F$. Under this assumption we cannot expect that the IM $\mathcal M_\eb$ as well as the solution semigroup $S_\eb(t):\mathcal E_\eb\to\mathcal E_\eb$ associated with equation \eqref{1.maineq} will be more regular than Lipschitz continuous. But if the nonlinearity $F\in C^{1+\beta}(H,H)$ for some positive $\beta$, then as known  the semigroup $S_\eb(t)$ will be also $C^{1+\beta}$ with respect to the initial data. We denote its Frechet derivative at point $\xi\in\mathcal E_\eb$ by $D_\xi S_\eb(t)$. In addition, repeating word by word the proof given in \cite{Zel}, we see that the IM $\mathcal M_\eb$ is also $C^{1+\beta}$-smooth if $\beta=\beta(N,L)>0$ is small enough. The assumption $F\in C^{1+\beta}(H,H)$ may be a bit restrictive from the point of view of applications since, as known, the Nemytskii operator $u\to f(u)$ is not Frechet differentiable as an operator from $H=L^2(\Omega)$ to itself even if $f\in C_0^\infty(\R)$. This problem may be overcome in a standard way by assuming that the nonlinearity $F$ satisfies
\begin{equation}\label{3.Fr}
\|F(u_1)-F(u_2)-F'(u_1)(u_1-u_2)\|_H\le C\|u_1-u_2\|_{H^1}^\beta\|u_1-u_2\|_H,\ \ u_1,u_2\in H^1.
\end{equation}
As shown e.g., in \cite{Zel} this assumption is sufficient to obtain the $C^{1+\beta}$-smoothness of the IM. On the other hand, it allows us to overcome the problems related with the aforementioned pathological property of the Nemytskii operator.
\begin{remark}\label{Rem3.bad} Note that the $C^{1+\beta}$-regularity of the IM is guaranteed only for small positive $\beta$ and even the analyticity of $F$ does not guarantee the existence of $C^2$ smooth IM since the resounances may appear. Typically for the invariant manifolds, extra regularity of the IM requires larger spectral gaps. In particular, for the limit parabolic case $\eb=0$, we need the spectral gap like
\begin{equation}\label{3.gap}
\lambda_{N+1}-(1+\beta)\lambda_N>CL
\end{equation}
in order to have $C^{1+\beta}$ regularity of the IM, see \cite{kok,rosa-tem} for more details. Note that the assumption \eqref{3.gap} is {\it essentially} stronger than \eqref{0.sg-sharp} and is natrually satisfied only if $\lambda_n$ grow {\it exponentially} fast as $n\to\infty$. Since in applications $A$ is usually the elliptic operator in a bounded domain where such growth is impossible due to the Weyl asymptotic, one cannot expect $C^2$-smooth IMs in applications.
\end{remark}
To continue, we need to recall the concept of normal hyperbolicity adopted to our case where the phase space is infinite-dimensional and the manifold is not compact, see \cite{fen,Hirsh,rosa-tem} for more details.
\begin{definition}\label{Def3.norm} Let $S_\eb(t)\in C^{1+\beta}(\mathcal E_\eb,\mathcal E_\eb)$ and $\mathcal M_\eb$ be an $N$-dimensional $C^{1+\beta}$ submanifold of $\mathcal E_\eb$ which is inavariant with respect to the semigroup $S_\eb(t)$. Denote by $\mathcal T\mathcal M_\eb$ the tangent bundle associated with $\mathcal M_\eb$ and let $\mathcal T_\xi\mathcal M_\eb\sim \R^N$, $\xi\in\mathcal M_\eb$, be its fibers. The manifold $\mathcal M_\eb$ is called stable and absolutely normally hyperbolic if there exists a  vector bundle $\mathcal N\mathcal M_\eb$ with fibers $\mathcal N_\xi\mathcal M_\eb$ of codimension $N$ in $\mathcal E_\eb$ such that
\par
1. The bundle $\mathcal N\mathcal M_\eb$ is invariant: $D_\xi S_\eb(t)\mathcal N_\xi\mathcal M_\eb\subset \mathcal N_{S_\eb(t)\xi}\mathcal M_\eb$, $t\ge0$.
\par
2. For every $\xi\in\mathcal M_\eb$, $\mathcal E_\eb=\mathcal T_\xi\mathcal M_\eb\oplus \mathcal N_\xi\mathcal M_\eb$ and the projectors $P_\xi$ and $Q_\xi$ to the first and second components of the direct sum satisfy
\begin{equation}\label{3.direct}
\|P_\xi\|_{\mathcal L(\mathcal E_\eb,\mathcal E_\eb)}+\|Q_\xi\|_{\mathcal L(\mathcal E_\eb,\mathcal E_\eb)}\le C,
\end{equation}
where the constant $C$ is independent of $\xi\in\mathcal M_\eb$.
\par
3. There exist positive constants $C$, $\theta$ and $\kappa<\theta$ such that, for every $\xi\in\mathcal M_\eb$,
\begin{equation}\label{3.norm}
\begin{cases}
\|D_{\xi}S_\eb(t)\eta\|_{\mathcal E_\eb}\le Ce^{-(\theta +\kappa)t}\|\eta\|_{\mathcal E_\eb},\ \ \eta\in\mathcal N_\xi\mathcal M_\eb,\\
\|D_{\xi}S_\eb(t)\eta\|_{\mathcal E_\eb}\ge C^{-1}e^{-(\theta -\kappa)t}\|\eta\|_{\mathcal E_\eb},\ \ \eta\in\mathcal T_\xi\mathcal M_\eb.
\end{cases}
\end{equation}
\end{definition}
\begin{remark}\label{Rem3.NH} Since IMs are stable by definition, we adapt the definition of normal hyperbolicity to this case by excluding the unstable bundle. In the finite dimensional case, we  have the strict invariance of the stable bundle $\mathcal N\mathcal M_\eb$ which is usually not the case in infinite dimensions since the linearization $D_\xi S_\eb(t)$ may be not invertible. For instance, in the case of parabolic PDEs these operators are compact and by this reason, not invertible. Estimate \eqref{3.direct} actually follows from \eqref{3.norm} in the case when $\mathcal M_\eb$ is compact, so it is  added to treat the non-comact case. Finally, {\it absolute} normal hyperbolicity means that the exponent $\theta$ is independent of the point $\xi\in\mathcal M_\eb$. In the general definition of normal hyperbolicity this exponent may depend on the point $\xi\in\mathcal M_\eb$. We restrict ourselves to the discussion of the absolute normal hyperbolicity only by two reasons. First, the IMs constructed by the Perron method are usually absolutely normally hyperbolic (although, non-absolute normally hyperbolic IMs naturally arise when the alternative method based on the invariant cones is used, e.g., for methods involving the so-called spatial averaging, see \cite{mal-par,kost,Zel}). Second, the absolute normal hyperbolicity can be relatively easily extended to the non-compact case and the proper extension (suitable for IMs) of non-absolute hyperbolicity to the non-compact case requires the replacing of exponents in \eqref{3.norm} by more complicated functions, see \cite{fen}.
\end{remark}
\begin{theorem}\label{Th3.norm} Let the assumptions of Theorem \ref{Th1.mmain} hold and let in addition the nonlinearity $F$ satisfy \eqref{3.Fr}. Then the IM $\mathcal M_\eb$ is absolutely normally hyperbolic in the sense of Definition \ref{Def3.norm}.
\end{theorem}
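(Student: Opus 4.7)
The plan is to adapt the Perron-method construction of $\mathcal{M}_\eb$ from Section \ref{s2} to the variational equation along orbits lying on $\mathcal{M}_\eb$, and then read off the invariant splitting directly from the exponential dichotomy produced by the choice of weight $\theta=\theta_\eb$ fixed in \eqref{2.theta}. First, I would use assumption \eqref{3.Fr} together with the semigroup estimates from Theorem \ref{Th1.well} and the contraction argument of Theorem \ref{Th1.mmain} to upgrade $S_\eb(t)$ and the graph map $M_\eb$ to $C^{1+\beta}$ (this is the word-for-word repetition of the argument of \cite{Zel} referenced in Remark \ref{Rem3.bad}); in particular, $D_\xi S_\eb(t)$ is well defined and the tangent bundle $\mathcal{T}\mathcal{M}_\eb=\operatorname{Im}DM_\eb$ is automatically invariant, with $\dim\mathcal{T}_\xi\mathcal{M}_\eb=N$.

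Next, for $\xi\in\mathcal{M}_\eb$ with forward orbit $u(t)=\Pi_1 S_\eb(t)\xi$, I would consider the non-autonomous linear equation
\begin{equation}\label{3.var}
\eb\Dt^2 v+\Dt v+Av=F'(u(t))v,
\end{equation}
and observe that, because $\|F'(u(t))\|_{\mathcal{L}(H,H)}\le L$ by \eqref{1.F}, every estimate used in the Appendix and in Theorem \ref{Th1.mmain} (in particular the key bound \eqref{2.kest}) applies to \eqref{3.var} verbatim, with the same spectral gap \eqref{2.sg} and the same weighted norm $\mathcal{H}_\theta^{-}$. The normal fiber would then be defined through the forward-Perron counterpart of the construction of $\mathcal{M}_\eb$:
\begin{equation*}
\mathcal{N}_\xi\mathcal{M}_\eb:=\bigl\{\eta_0\in\mathcal{E}_\eb\,:\,\text{the solution } v \text{ of }\eqref{3.var}\text{ with }\xi_v(0)=\eta_0\text{ satisfies } v\in L^2_{e^{-(\theta+\kappa)t}}(\R_+,H)\bigr\},
\end{equation*}
for a sufficiently small $\kappa>0$ (which exists because \eqref{2.sg} is a strict inequality and the roots $\mu_n^+$ are separated from $-\theta$). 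Dually, the tangent fiber consists of those $\eta_0$ whose variational orbit lies in the $N$-dimensional family parametrised by $p=\widehat{\mathcal{P}}_N v'(0)+\mathcal{P}_N v(0)\in P_N H$, which by the previous step coincides with $\operatorname{Im}DM_\eb$ at the corresponding base point.

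The splitting $\mathcal{E}_\eb=\mathcal{T}_\xi\mathcal{M}_\eb\oplus\mathcal{N}_\xi\mathcal{M}_\eb$ and the uniform projector bound \eqref{3.direct} would be obtained by applying the fixed-point argument of Theorem \ref{Th1.mmain} to \eqref{3.var} with vanishing right-hand side: for every $\eta_0\in\mathcal{E}_\eb$, Corollary \ref{CorA.ee} together with Lemma \ref{LemA.lin} yields a unique decomposition of the backward data into a $\mathcal{H}_\theta^-$-bounded piece (producing an element of $\mathcal{T}_\xi\mathcal{M}_\eb$ via $p=\widehat{\mathcal{P}}_N\eta_0^{(2)}+\mathcal{P}_N\eta_0^{(1)}$) and a forward-decaying complement (producing the element of $\mathcal{N}_\xi$). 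The linearity and the $\eb$-uniform constant in \eqref{2.kest} give \eqref{3.direct} with a bound independent of $\xi$ and of $\eb\to 0$. The exponential estimates \eqref{3.norm} then follow from the weighted $L^2$-bounds on \eqref{3.var} combined with Corollary \ref{CorA.ee} promoting $L^2_{e^{\theta t}}$-control to $L^\infty_{e^{\theta t}}$-control in $\mathcal{E}_\eb$.

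The main obstacle I anticipate is establishing the uniform projector bound \eqref{3.direct}, i.e.\ controlling the angle between $\mathcal{T}_\xi\mathcal{M}_\eb$ and $\mathcal{N}_\xi\mathcal{M}_\eb$ uniformly in $\xi\in\mathcal{M}_\eb$ and in the singular parameter $\eb$; this amounts to transferring the Lipschitz estimate \eqref{2.IM-lip} for $M_\eb$ to its derivative $DM_\eb$, which requires carrying out the contraction scheme of Section \ref{s2} for \eqref{3.var} with initial data perturbations rather than for the nonlinear equation itself, and invoking Corollary \ref{CorA.ee-reg} to absorb the boundary-layer contribution near $t=0$ arising from the non-orthogonality of $\mathcal{H}_N^+$ and $\mathcal{H}_N^-$ highlighted in Remark \ref{Rem3.graph}. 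Once this uniformity is in hand, the verification of the three items of Definition \ref{Def3.norm} is bookkeeping.
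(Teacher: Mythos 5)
The paper does not actually prove Theorem \ref{Th3.norm}: it only states that the argument is standard and cites \cite{rosa-tem}. Your sketch is the natural Lyapunov--Perron treatment of the variational equation \eqref{3.var} along orbits on $\mathcal M_\eb$, which is precisely what is carried out in \cite{rosa-tem} (and what the paper's own construction of $\mathcal M_\eb$ suggests), so you are taking essentially the same route the authors intend, just supplying the steps they leave out.

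One concrete slip you should fix: the weight in your definition of the normal fiber has the wrong sign. With the paper's convention \eqref{1.norm}, the space $L^2_{e^{-(\theta+\kappa)t}}(\R_+,H)$ consists of functions whose $H$-norm grows no faster than roughly $e^{(\theta+\kappa)t}$ as $t\to+\infty$; that admits essentially every variational solution, including those spanned by $e^{\mu_n^+t}$ for $n\le N$, so your $\mathcal N_\xi\mathcal M_\eb$ would not be a complement to $\mathcal T_\xi\mathcal M_\eb$. The normal fiber should instead be cut out by membership in $L^2_{e^{(\theta+\kappa)t}}(\R_+,H)$ (equivalently $\xi_v\in C_{e^{(\theta+\kappa)t}}(\R_+,\mathcal E_\eb)$ after promoting via Corollary \ref{CorA.ee}), so that the decay is strictly faster than $e^{-\theta t}$. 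Two smaller points: the lower bound in \eqref{3.norm} for $\eta\in\mathcal T_\xi\mathcal M_\eb$ is not automatic from the $L^2$ bounds and needs the observation that the flow restricted to the $N$-dimensional family parametrised by $p\in P_NH$ is invertible, together with the $\eb$-uniform lower bound on $\|\mathcal Sp\|$ coming from \eqref{1.S}; and, as you yourself anticipate, the $C^{1+\beta}$-regularity of $M_\eb$ (hence the uniform bound on $DM_\eb$ and on the projectors) is not a word-for-word repetition of \cite{Zel} -- the boundary-layer behaviour near $t=0$ and the non-orthogonal splitting of Remark \ref{Rem3.graph} must be handled through Corollary \ref{CorA.ee-reg} as you indicate, and the spectral gap available only gives $\beta>0$ small, not arbitrary smoothness (cf. Remark \ref{Rem3.bad}).
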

The proof of this theorem is standard and we will not repeat it here, see \cite{rosa-tem} for more details.
\begin{remark}\label{Rem3.eq} Recall that according to the general theory of invariant manifolds, the IM must be normally hyperbolic in order to be robust with respect to small perturbations, see \cite{fen,Hirsh}. In addition, to the best of our knowledge, all known  more or less general schemes of constructing  IMs automatically give the normal hyperbolicity (although it is not difficult to construct the artificial examples of non-normally hyperbolic IMs, see  Theorem \ref{Th3.no} and Remark \ref{Rem3.no} below), so exactly the normally hyperbolic IMs are most interesting from the point of view of applications. On the other hand, the non-existence of a {\it normally hyperbolic} IM is usually much easier to establish than the non-existence of any IM.
In particular, the normal hyperbolicity estimates \eqref{3.norm} is relatively easy to prove or disprove looking at the equilibria $\xi_0$ of the considered semigroup. Indeed, in this case $\mathcal S_{\xi_0}(t):=D_{\xi_0}S_\eb(t)$ is a linear semigroup in $\mathcal E_\eb$ and invariant subspaces $V_+:=\mathcal T_{\xi_0}\mathcal M_\eb$ and $V_-:=\mathcal N_{\xi_0}\mathcal M_\eb$ are just the spectral subspaces which correspond to the parts of the spectrum of $\mathcal S_{\xi_0}(t)$ situated outside and inside the disk $\{|z|\le e^{-\theta t}\}$ respectively. Assume now that the spectrum of the linear operator
$$
\mathcal L_{u_0}:=\mathcal A-\mathcal F'(u_0)
$$
which corrsponds to the linearization \eqref{0.par} near the equilibrium $u_0\in\mathcal H$ is {\it discrete} and the spectral mapping theorem holds for this operator. Then enumerating its eigenvalues $\{\nu_n\}_{n=1}^\infty$ in such way that their real parts are non-increasing, we see that the $N$-dimensional normally hyperbolic IM (which must contain all equilibria by the definition) exists only if
\begin{equation}\label{3.n-spec}
0>\operatorname{Re}\nu_N>\operatorname{Re}\nu_{N+1}.
\end{equation}
The non-existence of normally hyperbolic IMs of any finite dimension is usually verified by considering several (say, two or four) equilibria and organizing the multiplicity of the associated eigenvalues in such way that for any $N\in\Bbb N$ condition \eqref{3.n-spec} fails at least at one of these equilibria, see \cite{rom-counter,sell-counter} for details.
\par
We also note that in our case of equation \eqref{0.bad} the (weak) spectral mapping theorem also holds. Indeed, it obviously holds for the unperturbed semigroup $e^{-\mathcal A t}$ and as not difficult to verify, $e^{-\mathcal L_{u_0}t}$ is a compact perturbation of $e^{-\mathcal A t}$, see \cite{semigroup}. Thus, the aforementioned scheme is applicable in our case as well and we will use it below to verify the sharpness of our spectral gap assumptions.
\end{remark}

\subsection{Sharpness of  spectral gap conditions} In this subsection, we discuss the sharpness of the proved spectral gap conditions. We start with the following simple lemma.
\begin{lemma}\label{Lem3.simple} Let $N\in\mathbb N$ be fixed. Assume also that in the case if
\begin{equation}\label{3.real}
\frac1\eb<4\lambda_N,
\end{equation}
the constant $L$ is chosen in such way that $\lambda_{N+1}-\lambda_N<2L$. Then, there exists a linear operator $F\in\mathcal L(H,H)$ such that $\|F\|_{\mathcal L(H,H)}<L$ and the linear equation
\begin{equation}\label{3.lin}
\eb\Dt^2 u+\Dt u+Au=Fu
\end{equation}
does not possess an $N$-dimensional normally hyperbolic IM.
\end{lemma}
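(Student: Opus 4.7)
My approach is to use the spectrum-based criterion for normal hyperbolicity at equilibria summarised in Remark~\ref{Rem3.eq}: an $N$-dimensional normally hyperbolic IM must contain every equilibrium, and at each equilibrium the linearised first-order generator must satisfy the strict spectral gap $0>\mathrm{Re}\,\nu_N>\mathrm{Re}\,\nu_{N+1}$, where $\{\nu_n\}$ are its eigenvalues enumerated by non-increasing real part. For the linear equation~\eqref{3.lin} the origin is an equilibrium and the linearisation there coincides with the equation itself, so it suffices to exhibit a bounded $F$ with $\|F\|_{\mathcal L(H,H)}<L$ whose associated generator violates this gap at position~$N$.

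I would take $F$ diagonal in the orthonormal eigenbasis $\{e_n\}$ of $A$, acting nontrivially only on modes $N$ and $N+1$: set $Fe_N=-a\,e_N$, $Fe_{N+1}=a\,e_{N+1}$, and $Fe_n=0$ for $n\notin\{N,N+1\}$, with $a:=\tfrac12(\lambda_{N+1}-\lambda_N)$. Then $\|F\|_{\mathcal L(H,H)}=a<L$ by the hypothesis $\lambda_{N+1}-\lambda_N<2L$. Since $F$ commutes with $A$, equation~\eqref{3.lin} decouples mode by mode into the scalar ODEs $\eb u_n''+u_n'+\tilde\lambda_n u_n=0$ with shifted Fourier eigenvalues $\tilde\lambda_n=\lambda_n$ for $n\notin\{N,N+1\}$ and $\tilde\lambda_N=\tilde\lambda_{N+1}=\tfrac12(\lambda_N+\lambda_{N+1})$. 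The spectrum of the first-order generator is therefore $\bigcup_n\{\tilde\mu_n^+,\tilde\mu_n^-\}$ with $\tilde\mu_n^\pm=(-1\pm\sqrt{1-4\eb\tilde\lambda_n}\,)/(2\eb)$, cf.~\eqref{1.sol}.

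The decisive observation is that the equality $\tilde\lambda_N=\tilde\lambda_{N+1}$ forces $\tilde\mu_N^\pm=\tilde\mu_{N+1}^\pm$. Whether the common value is real (the case $4\eb\tilde\lambda_N\le 1$) or a pair of complex conjugates with real part $-\tfrac1{2\eb}$ (the case $4\eb\tilde\lambda_N>1$), the two $+$-branch roots $\tilde\mu_N^+$ and $\tilde\mu_{N+1}^+$ share the same real part and jointly span a two-dimensional eigenspace of the generator. Since $\mathrm{Re}\,\tilde\mu_n^+$ is non-increasing in $\tilde\lambda_n$ and every $-$-branch root has real part $\le-\tfrac1{2\eb}\le\mathrm{Re}\,\tilde\mu_m^+$, the enumeration by non-increasing real part places these two coincident values at positions $N$ and $N+1$, so $\mathrm{Re}\,\nu_N=\mathrm{Re}\,\nu_{N+1}$, the strict inequality~\eqref{3.n-spec} fails, and no $N$-dimensional normally hyperbolic IM can contain the origin.

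I expect the only genuine obstacle to be the rigorous application of the spectrum-based criterion in this non-sectorial setting; this has already been addressed in the last paragraph of Remark~\ref{Rem3.eq}, where it is recorded that $e^{-(\mathcal A_\eb-\mathcal F)t}$ is a compact perturbation of $e^{-\mathcal A_\eb t}$ (as $\mathcal F$ is bounded), so the weak spectral mapping theorem is available and \eqref{3.n-spec} is the correct criterion. As a sanity check, in the complex regime $1/\eb<4\lambda_N$ the conclusion is already visible with $F=0$, because infinitely many eigenvalues of $\mathcal A_\eb$ accumulate on the line $\mathrm{Re}\,z=-\tfrac1{2\eb}$; the construction above handles both regimes uniformly.
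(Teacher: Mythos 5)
Your argument reproduces the paper's proof essentially verbatim: the paper likewise invokes the spectral criterion of Remark~\ref{Rem3.eq} at the zero equilibrium, constructs exactly the operator $F$ in \eqref{3.Fsingle} (your $a=\tfrac12(\lambda_{N+1}-\lambda_N)$), deduces $\nu_N=\nu_{N+1}$ from the degeneracy $\tilde\lambda_N=\tilde\lambda_{N+1}$, and handles the complex-eigenvalue regime by noting that $F=0$ already forces $\operatorname{Re}\nu_N=\operatorname{Re}\nu_{N+1}=-\tfrac1{2\eb}$. The only cosmetic difference is that the paper performs that case split up front, while you fold it into a closing sanity check; both treatments tacitly require the hypothesis $\lambda_{N+1}-\lambda_N<2L$ precisely in the regime where the roots $\mu_N^\pm$ are real (i.e.\ the inequality in \eqref{3.real} should be read with $\ge$), which is how the paper's proof in fact uses it.
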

\begin{proof} Indeed, in the case when \eqref{3.real} is violated, we may just take $F=0$ and condition \eqref{3.n-spec} will be automatically violated since $\operatorname{Re}\nu_N=\operatorname{Re}\nu_{N+1}=-\frac1{2\eb}$. Thus, we only need to consider the case where \eqref{3.real} is satisfied. Then, we define the operator $F$ via
\begin{equation}\label{3.Fsingle}
Fe_N:=-\frac{\lambda_{N+1}-\lambda_N}2 e_N,\ \ Fe_{N+1}:=+\frac{\lambda_{N+1}-\lambda_N}2 e_{N+1},\ \ Fe_n=0,\ n\ne N,\,N+1.
\end{equation}
It is not difficult to see that $\|F\|_{\mathcal L(H,H)}=\frac{\lambda_{N+1}-\lambda_N}2<L$ and, on the other hand, at zero equilibrium we have $\nu_{N}=\nu_{N+1}$ which forbid the existence of $N$ dimensional normally hyperbolic IM and finishes the proof of the lemma.
\end{proof}
We are now ready to construct an equation of the form \eqref{0.main} which does not possess any finite dimensional normally hyperbolic IM. To this end of course, the spectral gap conditions should be violated for all $N$. Namely, let $n_{cr}=n_{cr}(\eb)$ be the largest $n$ for which the inequality $\frac1\eb<4\lambda_n$ be satisfied and assume that the constant $L$ is such that
\begin{equation}\label{3.nogap}
\sup_{N\le n_{cr}}\{\lambda_{N+1}-\lambda_N\}<2L.
\end{equation}
Then, the following theorem holds.
\begin{theorem}\label{Th3.no} Let the numbers $L>0$ and $\eb\ge0$ satisfy assumption \eqref{3.nogap} and let, in addition, $L>\lambda_1$. Then, there exists a globally Lipschitz with Lipschitz constant $L$ and smooth nonlinearity $F:H\to H$ such that equation \eqref{0.main} does not possess any finite-dimensional normally hyperbolic IM.
\end{theorem}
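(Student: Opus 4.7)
Following Remark \ref{Rem3.eq}, it suffices to exhibit a smooth globally Lipschitz $F:H\to H$ with $\mathrm{Lip}(F)\le L$ together with a \emph{finite} collection of equilibria of \eqref{0.main} such that for every $N\in\mathbb N$ at least one of these equilibria violates condition \eqref{3.n-spec}. I would split $\mathbb N$ into $\{N>n_{cr}\}$ and $\{1,\dots,n_{cr}\}$, and treat them respectively by the equilibrium at the origin and by $n_{cr}$ non-trivial equilibria produced via the mechanism of Lemma \ref{Lem3.simple}.

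\textbf{Large $N$.} I would arrange $F(0)=0$ and $F'(0)=0$, so the linearization of \eqref{0.main} at $u=0$ is the unperturbed operator with characteristic roots $\mu_n^\pm$ from \eqref{1.sol}. For every $n>n_{cr}$ the pair $\mu_n^\pm$ is complex conjugate with $\Ree\mu_n^\pm=-1/(2\eb)$, so when the eigenvalues $\{\nu_n\}$ of $\mathcal L_0$ are enumerated in non-increasing real parts one has $\Ree\nu_N=\Ree\nu_{N+1}=-1/(2\eb)$ for every $N\ge n_{cr}+1$; hence \eqref{3.n-spec} fails at $u=0$ for all such $N$.

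\textbf{Small $N$.} For each $N\in\{1,\dots,n_{cr}\}$ I would produce a non-zero equilibrium $u^{(N)}$ at which $F'(u^{(N)})$ coincides with the rank-$2$ operator $L_N$ defined by \eqref{3.Fsingle}. The computation from Lemma \ref{Lem3.simple} then shows that both $e_N$ and $e_{N+1}$ become eigenvectors of $A-F'(u^{(N)})$ with the common eigenvalue $\tilde\lambda_N:=(\lambda_N+\lambda_{N+1})/2$, producing a pair of (possibly complex) characteristic roots $\tilde\mu_N^\pm$ of multiplicity two in the spectrum of $\mathcal A-\mathcal F'(u^{(N)})$. Inserting these into the enumeration by non-increasing real parts, the $N$-th and $(N+1)$-st values coincide, so \eqref{3.n-spec} fails at $u^{(N)}$ for this value of $N$.

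\textbf{Assembly of $F$ and main obstacle.} The concrete realisation: pick $n_{cr}$ points $u^{(N)}:=R_N e_1$ with $0<R_1<\dots<R_{n_{cr}}$ large and mutually distant; on a small ball around each $u^{(N)}$ set $F(u):=A u^{(N)}+L_N(u-u^{(N)})$; on a small ball around $u=0$ and outside a large bounded set, set $F\equiv 0$; and interpolate by a smooth partition of unity. The local slope on each ball around $u^{(N)}$ equals $\|L_N\|_{\mathcal L(H,H)}=(\lambda_{N+1}-\lambda_N)/2<L$ by \eqref{3.nogap}, while the slope in the transition zones is governed by $\|A u^{(N)}\|/\mathrm{dist}(0,u^{(N)})=\lambda_1<L$ plus small contributions from derivatives of the cutoffs acting on the constant affine pieces, which can be made arbitrarily small by taking the $R_N$ large and the transition regions wide. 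Here the hypothesis $L>\lambda_1$ provides the needed slack. The \emph{main technical obstacle} is precisely this assembly, i.e., arranging the cutoffs so that the global Lipschitz constant of $F$ does not exceed $L$; this is a standard but delicate bookkeeping of the type carried out in the counterexamples \cite{rom-counter,sell-counter}. Finally, compactness of each $F'(u^{(N)})$ (being of finite rank) together with the weak spectral mapping theorem noted in Remark \ref{Rem3.eq} justifies reading off the spectrum of $\mathcal L_{u^{(N)}}$ from the eigenvalues computed above, completing the argument.
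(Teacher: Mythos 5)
Your overall strategy is valid and consistent with the scheme outlined in Remark~\ref{Rem3.eq}, and it correctly identifies the two regimes ($N>n_{cr}$ handled for free at the origin, $N\le n_{cr}$ requiring perturbations). However, the implementation differs from the paper's in a way worth noting. The paper uses exactly \emph{two} equilibria, $u_0^+=0$ and $u_0^-=Re_1$, with two diagonal operators $F^\pm$ designed so that $F^+$ collapses the spectral gaps for \emph{odd} $N$ and $F^-$ collapses them for \emph{even} $N$; together these cover all $N\le n_{cr}+1$, and for $n>n_{cr}+1$ the complex pairs take over automatically. You instead use $n_{cr}+1$ equilibria ($0$ and $R_Ne_1$, $N=1,\dots,n_{cr}$), each carrying a rank-two perturbation $L_N$. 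Both schemes work, but the paper's is more economical, and more importantly its nonlinearity is built in \emph{diagonal} form $F(u)=f_1(u_1)e_1+\sum_{n\ge2}f_n(u_1,u_n)e_n$; this makes the global Lipschitz bound an essentially one-dimensional verification, with the hypothesis $L>\lambda_1$ entering precisely to give $f_1$ two zeros with the right slopes. Your assembly, by contrast, glues \emph{non-diagonal} affine pieces $Au^{(N)}+L_N(u-u^{(N)})$ via a partition of unity in $H$; the cutoff-derivative term $\nabla\psi\otimes(F_N-F_{N+1})$ is not made ``arbitrarily small'' by spreading the equilibria out, since $\|F_N(u)-F_{N+1}(u)\|\sim\lambda_1|R_{N+1}-R_N|$ scales with the separation, so the contribution is of order $\lambda_1+\|L_N\|+\|L_{N+1}\|$ rather than negligible. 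This is not a fatal error --- one can, for instance, reorganize the construction into the paper's diagonal form (replacing the partition of unity by cutoffs in $u_1$ alone, so that each coordinate function $f_n$ is handled independently) and then the Lipschitz estimate closes --- but as stated the ``small contributions'' claim in your assembly step is incorrect and would need to be replaced by a genuine bookkeeping argument or by switching to the diagonal construction.
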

\begin{proof} The proof follows the strategy described at Remark \ref{Rem3.eq}. We introduce two linear operators  $F^+,F^-\in\mathcal L(H,H)$ which have the following form in the  basis $\{e_n\}_{n=1}^\infty$:
$$
F^+u:=\sum_{n=1}^\infty F_n^+(u,e_n)e_n,\ \ F_{2k-1}^+:=-\frac{\lambda_{2k}-\lambda_{2k-1}}2,\ \
F^+_{2k}:=-F^+_{2k-1},\ k\in\Bbb N
$$

and
$$
F^-u:=(L-\delta)(u,e_1)e_1+\sum_{n=2}^\infty F_n^-(u,e_n)e_n,\ \ F_{2k}^-:=-\frac{\lambda_{2k+1}-\lambda_{2k}}2,\ \ F^-_{2k+1}:=-F^-_{2k},
$$
where $\delta$ is small enough to guarantee that $L-\delta>\lambda_1$.
  Finally, we replace the diagonal elements of $F^\pm_n$ by zeros for $n> n_{cr}+1$. Then, due to the condition \eqref{3.nogap}, we conclude that $\|F^\pm\|_{\mathcal L(H,H)}<L$. We now construct the globally Lipschitz non-linearity $F$ with Lipschitz constant $L$  in such way that it will possess two equilibria $u^+_0$ and $u_0^-$ such that the linearizations of \eqref{0.main} at the equilibria $u_0^\pm$ give the following equations:
\begin{equation}\label{lin-lin}
\eb\Dt^2v+\Dt v+Av=F^{\pm}v.
\end{equation}
Such $F$  exists and even may be constructed  in the diagonal form:
$$
F(u)=f_1(u_1)e_1+\sum_{n=2}^\infty f_n(u_1,u_n)e_n
$$
with the equilibria of the form $u_0^+=0$ and $u_0^-=Re_1$ and $R>0$ is a sufficiently big number.
Indeed, for any $R>0$, we may take the function $f_1(u_1)$ in the form
$$
f_1(z):=\max\left\{-\frac{\lambda_2-\lambda_1}2z,(L-\delta)(z-R)+\lambda_1R\right\}.
$$
Then, assumption $L-\lambda_1-\delta>0$ guarantees that $f_1(0)=f_1(R)-\lambda_1R=0$ and condition \eqref{3.nogap} ensures us that $|f'_1(z)|<L$. This function is only Lipschitz continuous, but applying the standard mollification operator with symmetric convolution kernel gives us the smooth analogue of $f_1$ satisfying the above properties. Thus, without loss of generality, we may assume from now on that $f_1$ is smooth.
\par
Let us construct $f_n(u_1,u_n)$ for $n>1$. To this end, we introduce the smooth cut-off function $\varphi(z)$ such that $\varphi(z)\equiv1$ for $z\le0$ and $\varphi(z)\equiv0$ for $z>\frac12$ and fix
$$
f_n(u_1,u_n)=(F_n^+\varphi(u_1/R)+F_n^-\varphi(1-u_1/R))\arctan(u_n).
$$
Then, obviously $u_0^\pm$ are the equilibria and the linearizations around them coincide with \eqref{lin-lin}. On the other hand, by construction
$$
|\partial_{u_n} f_n|<L,\ \ |\partial_{u_1}f_n|\le CR^{-1},\ \ f_n(u_1,0)=0
$$
and, therefore, we may fix the constant $R$ to be large enough to guarantee that the Lipschitz constant of the map $F(u)$ is $H$ is less than $L$, see also \cite{Zel} for the analogous construction.
\par
To conclude the proof,
 it only remains to note that by the construction of operators $F^\pm$, the eigenvalues $\nu_n(u_0^\pm)$ at these equilibria enumerated in the non-increasing order of their real parts satisfy
\begin{equation}
1.\ \operatorname{Re}\nu_n(u_0^+)=\operatorname{Re}\nu_{n+1}(u_0^+),\ \text{ $n$ is odd}\ \ \ \
2.\ \operatorname{Re}\nu_n(u_0^-)=\operatorname{Re}\nu_{n+1}(u_0^-),\ \text{ $n$ is even}.
\end{equation}
These two conditions exclude the existence of a normally hyperbolic IM of any finite dimension and finish the proof of the theorem.
\end{proof}
\begin{remark}\label{Rem3.no} Note that in the constructed example equation \eqref{0.main} is actually split to the infinite system of uncoupled ODEs
$$
\eb\Dt^2u_1+\Dt u_1+\lambda_1u_1=f_1(u_1),\ \ \eb\Dt^2u_n+\Dt u_n+\lambda_n u_n=f_n(u_1,u_n),\ \ f_n(u_1,0)=0,
$$
so it possesses a lot of IMs which are {\it not} normally hyperbolic, for instance, for any $N\in\Bbb N$, $N>1$ (such that $\lambda_{N+1}>L$), the plane $P_N\mathcal E_\eb$ will be an IM. But, in a complete agreement with the general theory, all these manifolds can be destroyed by arbitrarily small perturbations and, by this reason, are not very interesting. Indeed, if we slightly perturb the linear operator $F$ defined by \eqref{3.Fsingle} in the following way:
\begin{equation}\label{3.rot}
Fe_N:=-\frac{\lambda_{N+1}-\lambda_N}2 e_N+\delta e_{N+1},\ \ Fe_{N+1}:=+\frac{\lambda_{N+1}-\lambda_N}2 e_{N+1}-\delta e_N,
\end{equation}
where $\delta>0$ is arbitrarily small,
then the corresponding $\nu_N$ and $\nu_{N+1}$ become complex conjugate with non-zero imaginary parts and the $N$-dimensional invariant plane generated by the eigenvectors corresponding to the eigenvalues $\nu_1,\cdots,\nu_N$ in the non-perturbed case $\delta=0$ will disappear. Obviously, we also may perturb the operators $F^{\pm}$ introduced in the proof of Theorem \ref{Th3.no} in a similar way in order to destroy all aforementioned artificial non-normally hyperbolic IMs. Moreover, utilizing this idea in the spirit of \cite{EKZ}, see also \cite{Zel}, we may remove the normal hyperbolicity assumption in Theorem \ref{Th3.no} and construct the nonlinearities $F$ in such way that equation \eqref{0.main} will not possess any Lipschitz and even Log-Lipschitz inertial manifolds, see also \cite{sola1} for the non-existence of $C^1$-smooth non-normally hyperbolic inertial manifolds for damped wave equations.
\end{remark}

\subsection{Applications to damped wave equations} In this concluding subsection, we briefly discuss how to apply the obtained results to damped wave equations of the form
\begin{equation}\label{3.dw}
\eb\Dt^2 u+\Dt u-\Dx u=f(u)+g,\ \ \ u\big|_{\partial\Omega}=0,\ \ \eb>0,
\end{equation}
where $\Omega$ is a smooth bounded domain of $\R^d$, $d=1,2,3$, $A:=-\Dx$ is a Laplacian with respect to the variable $x\in\R^d$, $u=u(t,x)$ is an unknown function, $g\in H:= L^2(\Omega)$ are given external forces  and $f\in C^2(\R)$ is a given non-linearity satisfying the following dissipativity and growth restrictions:
\begin{equation}\label{3.f}
1.\ \ f(u)u\le C,\ \ \ |f''(u)|\le C(1+|u|^{q-2}),
\end{equation}
where the exponent $q\ge2$ is arbitrary if $d=1$ or $d=2$ and $q\le q_{crit}=5$ if $n=3$.
\par
Damped hyperbolic equations of the form \eqref{3.dw} are very popular model equations in the theory of attractors (which are non-trivial and interesting from both theoretic and applied points of view) and are intensively studied by many authors, see \cite{BV,CV,tem} and references therein. In particular, it is well known that at least for $q\le3$, equation \eqref{3.dw} is globally well-posed in the energy phase space $\mathcal E_\eb:=H^1\times H=H^1_0(\Omega)\times L^2(\Omega)$, generates a dissipative semigroup in it and possesses a compact global attractor $\Bbb A_\eb$ in $\mathcal E_\eb$. Moreover, this global attractor is uniformly (as $\eb\to0$) bounded in the space $\mathcal E_\eb^1$:
\begin{equation}\label{3.a-un}
\|\mathbb A_\eb\|_{\mathcal E_\eb^1}\le C,
\end{equation}
see also \cite{fab}.
The analogous result has been recently obtained for the case $q\le q_{crit}=5$ as well (under some extra technical assumptions on $f$, see \cite{sav}) based on the so-called Strichartz estimates. It is also known that for sufficiently small $\eb>0$ the analogous result holds without any restriction on the exponent $q$ in the 3D case as well, see \cite{zzel}.
\par
Note that, due to the Sobolev embedding $H^2(\Omega)\subset C(\Omega)$, estimate \eqref{3.a-un} implies that, for every trajectory $u(t)$ of equation \eqref{3.dw} belonging to the attractor,
$$
\|u(t)\|_{C(\Omega)}\le R,
$$
where $R$ is independent of the choice of $u$, $t$ and $\eb\to0$. By this reason we may cut-off the nonlinearity $f$ outside, of $|u|>2R$ by introducing the new nonlinear function $\bar f\in C^2_0(\R)$ such that
$$
\bar f(u)\equiv f(u),\ \ \text{if $|u|\le2R$}.
$$
Actually doing the cut-off procedure with a bit of accuracy, we may achieve that $\bar f$ will satisfy \eqref{3.f} with {\it exactly} the same constants as the initial $f$. This in turn will guarantee that the attractor of the modified equation
\begin{equation}\label{3.mud}
\eb\Dt^2u+\Dt u-\Dx u=\bar f(u)+g
\end{equation}
will satisfy  estimate \eqref{3.a-un} with exactly the same constant as the attractor of the initial equation. Finally, by the construction of $\bar f$, this implies that the attractor of \eqref{3.mud} {\it coincides} with the attractor $\mathbb A_\eb$ of the initial equation \eqref{3.dw}, so the cut-off procedure does not affect the attractor at all. The advantage, however, is that now $\bar f$ is globally bounded (as well as the functions $\bar f'$ and $\bar f''$) and the global Lipschitz continuity now holds, so we are able to apply the theory developed above. But, in order to satisfy the assumption $F(0)=0$, we need one more modification. Namely, we introduce the function $G=G(x)$ as a solution of the following elliptic boundary value problem
\begin{equation}\label{3.ell}
-\Dx G=\bar f(G)+g,\ \ G\big|_{\partial\Omega}=0.
\end{equation}
It is well known that under the above assumptions the solution $G$ of this problem exists and belongs to the space $H^2(\Omega)\subset C(\Omega)$ (the uniqueness is not guaranteed and usually does not hold, but we need to fix only one  of such solutions). Finally, we introduce the new independent variable $\bar u:=u-G$ and write the equation \eqref{3.mud} in the form
\begin{equation}\label{3.fin}
\eb\Dt^2\bar u+\Dt\bar u-\Dx\bar u=\bar f(\bar u+G)-\bar f(G), \ \ u\big|_{\partial\Omega}=0.
\end{equation}
Introducing now $A:=-\Dx$ with Dirichlet boundary conditions and $F(u):=\bar f(u+G)-\bar f(G)$, we see that the map $F$ is indeed globally Lipschitz as a map from $H=L^2(\Omega)$ to $H$ with the Lipschitz constant
$$
L:=\max_{u\in\R}|\bar f'(u)|<\infty
$$
and satisfies the condition $F(0)=0$. Thus, equation \eqref{3.fin} has the form of \eqref{0.main} and all of the assumptions posed on $F$ and $A$ are satisfied, therefore, to verify the existence of the IM for this problem, we only need to check the spectral gap conditions.
\par
To conclude, we discuss the possibility to find $N$ such that the spectral gap condition $\lambda_{N+1}-\lambda_N>2L$ is satisfied if the constant $L$ is given (the second assumption of \eqref{0.sg-sg} does not contain $L$ and is always satisfied (for a given $N$ if $\eb>0$ is small enough). The answer on this question strongly depends on the dimension $d$, so we discuss the cases $d=1$, $d=2$ and $d=3$ separately:
\par
1. $d=1$. In this case, due to the Weyl asymptotic, $\lambda_n\sim C_{\Omega}n^2$, so we have infinitely many spectral gaps of increasing size:
$$
\lambda_{N+1}-\lambda_N\sim c\sqrt{\lambda_N}
$$
and, for any $L$, the proper spectral gap exists. Thus, in the 1D case, the IM for problem \eqref{3.dw} always exists at least if $\eb\ge0$ is small enough.
\par
2. $d=2$. In this case, the Weyl asymptotic ($\lambda_n\sim C_{\Omega}n$) is not strong enough to guarantee the existence of spectral gaps of arbitrary size and their existence or non-existence remains a mystery. On the one hand, to the best of our knowledge, there are no examples of domains $\Omega$ without such gaps and, on the other hand, there are no results on their existence for more or less general domains. In particular, for the 2D square torus, the largest possible gaps are only logarithmic with respect to $\lambda_N$:
$$
\lambda_{N+1}-\lambda_N\sim \log\lambda_N.
$$
Thus, for general 2D domains the validity of spectral gap conditions remains an open problem.
\par
3. $d=3$. In this case, the Weyl asymptotic reads $\lambda_n\sim C_{\Omega}n^{2/3}$ and there are no reasons to expect big spectral gaps to exist unless the domain $\Omega$ is extremely symmetric. For instance, it fails even on a 3D torus and the only example known for us where  they exist is the case where $\Omega=\mathbb S^3$ is a 3D sphere (actually, these gaps exist on spheres of arbitrary dimension $d$). Thus, our applications of the IMs theory to  damped wave equations in dimension three are mainly restricted to the case where the underlying domain is a sphere.

\section*{Appendix. Key estimates for the linear equation}
\renewcommand{\thesection}{A}
\setcounter{equation}{0}
\setcounter{proposition}{0}
In this Appendix, we derive the estimates for the linear hyperbolic equation in weighted spaces which are  crucial for our construction of the inertial manifold. Namely,
let us consider the following linear damped wave equation
\begin{equation}\label{1.1}
\eb\Dt^2 u+\Dt u+A u=h(t),\ \ \eb>0,
\end{equation}
in a Hilbert space $H$. As before,  $A: D(A)\to H$ is a positive selfadjoint linear operator with compact inverse, $\lambda_1\le\lambda_2\le\cdots\le\lambda_n\le\cdots$ be the eigenvalues of $A$ and $\{e_n\}_{n=1}^\infty$ be the corresponding complete orthonormal system of eigenvectors. We also
assume that the right-hand side $h$ belongs to the weighted space
\begin{equation}\label{1.2}
\Cal H_\theta:=L^2_{e^{\theta t}}(\R,H)
\end{equation}
equipped by the norm
\begin{equation}\label{1.3}
\|v\|^2_{\Cal H_\theta}=\int_{\R}e^{2\theta t}\|v(t)\|^2_H\,dt.
\end{equation}
As not difficult to see (see e.g., the proof below), in the non-resonant case where
\begin{equation}\label{1.4}
\theta(\eb\theta-1)+\lambda_n\ne0,\ \ n\in\Bbb N,
\end{equation}
equation \eqref{1.1} is uniquely solvable in the class $u\in\Cal H_\theta$ for every $h\in \Cal H_\theta$, so the solution operator $\mathcal L: h\to u$ is well-defined. Our task is to find/estimate the norm of this operator. This is done in the following lemma.

\begin{lemma}\label{Lem1.1} Let $h\in \mathcal H_{\theta}$ and let $N\in\Bbb N$ be such that
\begin{equation}\label{1.5}
\lambda_{N+1}-\lambda_N>0,\ \ \frac1{\eb}\ge 3\lambda_{N+1}+\lambda_N.
\end{equation}
Let also the exponent $\theta\in\R$ satisfy
\begin{equation}\label{1.6}
2\theta(\eb\theta-1)+\lambda_{N+1}+\lambda_N=0.
\end{equation}
Then, problem \eqref{1.1} is uniquely solvable in the space $u\in \mathcal H_\theta$ and the solution operator $\mathcal L:\mathcal H_\theta\to\mathcal H_\theta$ ($\mathcal Lh:=u$) satisfies following estimate:
\begin{equation}\label{1.7}
\|\mathcal L\|_{\mathcal L(\Cal H_\theta,\Cal H_\theta)}\le \frac{2}{\lambda_{N+1}-\lambda_N}.
\end{equation}
\end{lemma}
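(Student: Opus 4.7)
My plan is to diagonalise the problem via the eigenbasis $\{e_n\}$. Writing $u(t)=\sum u_n(t)e_n$ and $h(t)=\sum h_n(t)e_n$, equation \eqref{1.1} decouples into the scalar ODEs
\begin{equation*}
\eb u_n''+u_n'+\lambda_n u_n = h_n(t),\qquad n\in\Bbb N,
\end{equation*}
and by Parseval $\|u\|_{\mathcal H_\theta}^2=\sum_n\|u_n\|_{L^2_{e^{\theta t}}(\R)}^2$ and similarly for $h$. Thus it suffices to establish the scalar bound $\|u_n\|_{L^2_{e^{\theta t}}(\R)}\le\frac{2}{\lambda_{N+1}-\lambda_N}\|h_n\|_{L^2_{e^{\theta t}}(\R)}$ uniformly in $n$.

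To absorb the exponential weight I would set $v_n(t):=e^{\theta t}u_n(t)$ and $g_n(t):=e^{\theta t}h_n(t)$, which belong to $L^2(\R)$. A direct calculation rewrites the ODE as
\begin{equation*}
\eb v_n''+(1-2\eb\theta)v_n'+(\eb\theta^2-\theta+\lambda_n)v_n = g_n,
\end{equation*}
and the $L^2$ Fourier transform in $t$ reduces this to the algebraic identity $D_n(\xi)\widehat v_n(\xi)=\widehat g_n(\xi)$ with symbol
\begin{equation*}
D_n(\xi) = -\eb\xi^2 + i(1-2\eb\theta)\xi + (\eb\theta^2-\theta+\lambda_n).
\end{equation*}
By Plancherel, everything reduces to the pointwise lower bound $|D_n(\xi)|\ge(\lambda_{N+1}-\lambda_N)/2$ uniformly in $\xi\in\R$ and $n\in\Bbb N$; this bound also shows $D_n$ never vanishes and hence yields uniqueness for free.

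The key arithmetic identity coming from \eqref{1.6} is $\eb\theta^2-\theta=-(\lambda_{N+1}+\lambda_N)/2$, which I would use twice. First, the real part of $D_n(0)$ equals $c_n:=\lambda_n-(\lambda_{N+1}+\lambda_N)/2$, which by definition of $N$ already satisfies $|c_n|\ge(\lambda_{N+1}-\lambda_N)/2$. Second, squaring the identity gives $(1-2\eb\theta)^2=1-2\eb(\lambda_{N+1}+\lambda_N)$, which is non-negative thanks to \eqref{1.5}. Setting $s=\xi^2\ge 0$, the two observations combine into
\begin{equation*}
|D_n(\xi)|^2 = \eb^2 s^2 + \bigl[1-\eb(\lambda_{N+1}+\lambda_N+2\lambda_n)\bigr]s + c_n^2.
\end{equation*}
When the coefficient of $s$ is non-negative the minimum over $s\ge 0$ is simply $c_n^2\ge(\lambda_{N+1}-\lambda_N)^2/4$; one checks that this case covers every $n\le N+1$ exactly because $1/\eb\ge 3\lambda_{N+1}+\lambda_N$. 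Otherwise the interior minimum is attained, and a short difference-of-squares computation simplifies it to $\sigma^2(4\eb\lambda_n-1)/(4\eb^2)$ with $\sigma^2=1-2\eb(\lambda_{N+1}+\lambda_N)\ge 0$; I would then verify that this too is bounded below by $(\lambda_{N+1}-\lambda_N)^2/4$, using the boundary matching $4\eb\lambda_n-1=2\eb c_n$ at the transition together with monotonicity in $\lambda_n$.

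The main obstacle, as anticipated, is this multiplier estimate for the interior-minimum branch at large $n$: this is exactly where the second spectral gap hypothesis in \eqref{1.5} and the specific choice of $\theta$ in \eqref{1.6} are used in an essential, non-negotiable way. Once the pointwise bound on $|D_n|$ is secured, Plancherel summed over $n$ and the reversal of the weight shift yield \eqref{1.7} together with existence and uniqueness of the solution in $\mathcal H_\theta$.
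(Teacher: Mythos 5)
Your proposal is correct and follows essentially the same route as the paper: diagonalize in the eigenbasis of $A$, absorb the weight by $v_n=e^{\theta t}u_n$, pass to the Fourier transform in $t$ and use Plancherel to reduce everything to a uniform lower bound on the symbol, then minimize the quadratic in $s=\xi^2$ with the same two-case split on the sign of the linear coefficient. Your Case II computation $Q_{\min}=\sigma^2(4\eb\lambda_n-1)/(4\eb^2)$ is just a rearrangement of the paper's $\left(\tfrac1{2\eb}-\lambda_N-\lambda_{N+1}\right)\left(2\lambda_n-\tfrac1{2\eb}\right)$, and the concluding inequalities (monotonicity in $\lambda_n$, then $\sigma^2/(2\eb)\ge(\lambda_{N+1}-\lambda_N)/2$ from \eqref{1.5}) are exactly the paper's.
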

\begin{proof} Changing the dependent variable $v=e^{\theta t}u$, we have
\begin{equation}\label{1.8}
\eb \Dt^2v+(1-2\eb\theta)\Dt v+Av+\theta(\eb\theta-1)v=\tilde h(t):=e^{\theta t}h(t),
\end{equation}
so the problem is reduced to the analogous non-weighted estimate for the equivalent equation \eqref{1.8}. At the next step, we split the solution $v(t)$ into the Fourier series
\begin{equation}\label{1.9}
v(t)=\sum_{n=1}^\infty y_n(t)e_n,\ \ \ \tilde h(t)=\sum_{n=1}^\infty h_n(t)e_n.
\end{equation}
Then, equation \eqref{1.8} reads
\begin{equation}\label{1.10}
\eb y_n''(t)+(1-2\eb\theta)y_n'(t)+(\lambda_n+\theta(\eb\theta-1))y_n(t)=h_n(t),\ n\in\mathbb N
\end{equation}
and the desired norm can be found by
\begin{equation}\label{1.11}
\|\Cal L\|_{\Cal L(\Cal H_\theta,\Cal H_\theta)}=\max_{n\in\Bbb N}\|\Cal L_n\|_{\Cal L(L^2(\R),L^2(\R))},
\end{equation}
where $\Cal L_n$ are the solution operators for equations \eqref{1.10}.
\par
To compute these norms, we use the Fourier transform and the Plancherel theorem. Indeed,
\begin{equation}\label{1.12}
\hat y_n(\mu)=R_n(\mu)^{-1}\hat h_n(\mu),\ \ R_n(\mu):=-\eb^2\mu^2+i(1-2\eb\theta)\mu+\lambda_n+\theta(\eb\theta-1)
\end{equation}
and, therefore,
\begin{equation}\label{1.13}
\|\Cal L_n\|_{\Cal L(L^2(\R),L^2(\R))}=\max_{\mu\in\R}|R_n(\mu)^{-1}|=\frac1{\min_{\mu\in\R}|R_n(\mu)|}.
\end{equation}
Thus, we only need to prove that, under the above assumptions,
\begin{equation}\label{1.14}
\min_{\mu\in\R}|R_n(\mu)|\ge \frac{\lambda_{N+1}-\lambda_N}2,
\end{equation}
for all $n\in\Bbb N$.
As not difficult to compute,
\begin{multline}\label{1.15}
|R_n(\mu)|^2=(-\eb\mu^2+\lambda_n+\theta(\eb\theta-1))^2+\mu^2(1-2\eb\theta)^2=\\=
\eb^2\mu^4+(1-2\eb\lambda_n+2\eb\theta(\eb\theta-1))\mu^2+(\lambda_n+\theta(\eb\theta-1))^2
\end{multline}
and it remains to minimize the quadratic polynomial
\begin{equation}\label{1.16}
Q(z):=\eb z^2+(1-2\eb\lambda_n+2\eb\theta(\eb\theta-1))z+(\lambda_n+\theta(\eb\theta-1))^2
\end{equation}
on the semiaxis $z\ge0$. There are two possibilities:
\par
{\it Case I.} $(1-2\eb\lambda_n+2\eb\theta(\eb\theta-1))\ge0$. Then, the minimum is achieved at $z=0$ and is equal to
\begin{equation}\label{1.17}
Q(0)=(\lambda_n+\theta(\eb\theta-1))^2=\(\lambda_n-\frac{\lambda_{N+1}+\lambda_N}2\)^2.
\end{equation}
The minimum of the function $n\to \(\lambda_n-\frac{\lambda_{N+1}+\lambda_N}2\)^2$ is achieved at two points $n=N$ and $n=N+1$ and is equal to $(\lambda_{N+1}-\lambda_N)^2/4$. Therefore, for this case the lemma is proved.
\par
{\it Case II.} $(1-2\eb\lambda_n+2\eb\theta(\eb\theta-1))<0$. In this case, the minimum is achieved at the vertex of the parabola and is equal to
\begin{multline}
Q_{min}=(\lambda_n+\theta(\eb\theta-1))^2-\frac1{(2\eb)^2}(1-2\eb\lambda_n+2\eb\theta(\eb\theta-1))^2=
\\=\(\frac1{2\eb}+2\theta(\eb\theta-1)\)\(2\lambda_n-\frac1{2\eb}\).
\end{multline}
Using that $2\lambda_n+\lambda_{N+1}+\lambda_N>\frac1\eb$ in Case II, we get
\begin{equation}\label{1.18}
Q_{min}=\(\frac1{2\eb}-\lambda_N-\lambda_{N+1}\)\(2\lambda_n-\frac1{2\eb}\)\ge \(\frac1{2\eb}-\lambda_N-\lambda_{N+1}\)^2,
\end{equation}
where we have used that the first multiplier is non-negative due to assumption \eqref{1.5} (here we only need that $\theta$ is real and \eqref{1.5} is not used in full strength). Moreover, due to \eqref{1.5},
$$
\frac1{2\eb}\ge \lambda_{N+1}+\frac{\lambda_{N+1}+\lambda_N}2
$$
which gives
$$
 \frac1{2\eb}-\lambda_N-\lambda_{N+1}\ge (\lambda_{N+1}-\lambda_N)/2
 $$
 and finishes the proof of the lemma.
\end{proof}
The next lemma gives the extra smoothing properties of the map $\mathcal L$.
\begin{lemma}\label{Cor1.1} Let the assumptions of Lemma \ref{Lem1.1} hold and $h\in\mathcal H_\theta$ and $u:=\mathcal Lh$. Then,
$$
(u,\Dt u)\in C_{e^{\theta t}}(\R,\mathcal E), \ \eb A^{-1/2}\Dt^2 u\in\mathcal H_\theta
$$
and the following estimate holds:
\begin{multline}\label{1.sm}
\eb\|\Dt u\|_{C_{e^{\theta t}}(\R,H)}^2+\|u\|_{C_{e^{\theta t}}(\R,H^1)}^2+\|u\|^2_{L^2_{e^{\theta t}}(\R,H^1)}+\\+\|\Dt u\|_{L^2_{e^{\theta t}}(\R,H)}^2+\eb^2\|\Dt^2 u\|^2_{L^2_{e^{\theta t}}(\R,H^{-1})}\le C_N\|h\|_{L^2_{e^{\theta t}}(\R,H)}^2,
\end{multline}
where the constant $C_N$ depends on $N$, but is independent of $\eb\to0$ and $h$.
\end{lemma}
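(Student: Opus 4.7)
The plan is to follow the Fourier-analytic scheme of Lemma~\ref{Lem1.1}: change variables to $v = e^{\theta t}u$, $\tilde h = e^{\theta t}h$, expand $v = \sum_n y_n e_n$, $\tilde h = \sum_n h_n e_n$, and apply the Fourier transform in time to get $\hat y_n(\mu) = R_n(\mu)^{-1}\hat h_n(\mu)$ with
\begin{equation*}
|R_n(\mu)|^2 = (b_n - \eb\mu^2)^2 + a^2\mu^2,\qquad b_n := \lambda_n - \tfrac{\lambda_N+\lambda_{N+1}}{2},\quad a^2 := 1 - 2\eb(\lambda_N+\lambda_{N+1}).
\end{equation*}
The second spectral gap assumption $\eb(3\lambda_{N+1}+\lambda_N)\le 1$ together with $\lambda_{N+1}>\lambda_N$ gives $a^2 \ge (\lambda_{N+1}-\lambda_N)/(3\lambda_{N+1}+\lambda_N) > 0$ uniformly in $\eb$. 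Each $L^2$-type norm in \eqref{1.sm} reduces, via Plancherel, to a uniform Fourier multiplier estimate. The bound $\mu^2|R_n(\mu)|^{-2}\le a^{-2}\le C_N$ follows immediately from $|R_n|^2\ge a^2\mu^2$, giving $\|\Dt v\|_{L^2(\R,H)}\le C_N\|\tilde h\|_{L^2(\R,H)}$. The bound $\lambda_n|R_n(\mu)|^{-2}\le C_N$, giving $\|v\|_{L^2(\R,H^1)}\le C_N\|\tilde h\|_{L^2(\R,H)}$, is established by distinguishing the two cases of Lemma~\ref{Lem1.1}: in Case~I the inequality $|R_n|^2\ge b_n^2$ combined with $\lambda_n/|b_n|\le C_N$ (since $|b_n|\ge (\lambda_{N+1}-\lambda_N)/2$) does the job, while in Case~II one uses the sharper formula $|R_n|^2_{\min} = (\tfrac{1}{2\eb}-\lambda_N-\lambda_{N+1})(2\lambda_n-\tfrac{1}{2\eb})$ extracted from the proof of Lemma~\ref{Lem1.1}, which yields $|R_n|^2_{\min}\ge c_N\lambda_n$. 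The bound on $\eb^2\|\Dt^2 u\|_{L^2_{e^{\theta t}}(\R,H^{-1})}^2$ is then read off from the equation itself: from $\eb\Dt^2 u = h - \Dt u - Au$ one gets $\eb\|\Dt^2 u\|_{H^{-1}}\le \|h\|_{H^{-1}} + \|\Dt u\|_{H^{-1}} + \|u\|_{H^1}$, and the three terms on the right are controlled by $\|h\|_{L^2_{e^{\theta t}}(\R,H)}$.

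For the pointwise-in-time $C_{e^{\theta t}}$ bounds I split $v = P_N v + Q_N v$ and handle the two parts separately. For $Q_N v$, each coefficient $y_n$ with $n\ge N+1$ satisfies the scalar ODE $\eb y_n'' + (1-2\eb\theta) y_n' + b_n y_n = h_n$ with $b_n>0$; multiplying by $y_n'$ and integrating over $(-\infty,t]$ (using that $y_n,y_n'\in L^2(\R)$ are continuous and therefore vanish at $-\infty$) yields
\begin{equation*}
\tfrac{\eb}{2}|y_n'(t)|^2 + \tfrac{b_n}{2}|y_n(t)|^2 \le \tfrac{1}{4a}\int_{-\infty}^{t}|h_n(s)|^2\,ds \le C_N\|h_n\|_{L^2(\R)}^2.
\end{equation*}
Summing over $n\ge N+1$ and using $\lambda_n/b_n\le C_N$ gives the desired uniform-in-$t$ bound on $\eb\|\Dt Q_N v(t)\|_H^2 + \|Q_N v(t)\|_{H^1}^2$. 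For the finite-dimensional part $P_N v$, I first extract $\|\eb y_n''\|_{L^2(\R)}\le C_N\|h_n\|_{L^2(\R)}$ directly from the equation (using the already established $L^2$ bounds on $y_n$ and $y_n'$), and then apply Agmon's inequality $\|f\|_{L^\infty(\R)}^2\le 2\|f\|_{L^2(\R)}\|f'\|_{L^2(\R)}$ to obtain $\lambda_n|y_n(t)|^2 \le \lambda_N\cdot C_N\|h_n\|^2$ and $\eb|y_n'(t)|^2 \le 2\|y_n'\|_{L^2}\cdot\|\eb y_n''\|_{L^2} \le C_N\|h_n\|^2$. Recombining the low- and high-mode contributions and reverting $v\mapsto e^{-\theta t}u$ (via $\|u(t)\|_{H^1}= e^{-\theta t}\|v(t)\|_{H^1}$ and $\|\Dt u(t)\|_H \le e^{-\theta t}(\|\Dt v(t)\|_H + \theta\|v(t)\|_H)$) delivers the two $C_{e^{\theta t}}$ bounds in \eqref{1.sm}.

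The main obstacle is maintaining uniformity of the constants as $\eb\to 0$: one must avoid introducing unbounded factors $a^{-1}$ or negative powers of $\eb$ anywhere. This is secured on three fronts: by the uniform positivity $a^2\ge c_N>0$ (where the second spectral gap is used in an essential way), by the sharper lower bound $|R_n|^2\ge c_N\lambda_n$ in Case~II which must be extracted from the proof of Lemma~\ref{Lem1.1} with the correct $\lambda_n$-dependence, and by the fact that in the Agmon argument for the low modes the singular factor $1/\eb$ coming from solving the equation for $y_n''$ is precisely absorbed by the extra $\eb$ on the left-hand side $\eb|y_n'(t)|^2$.
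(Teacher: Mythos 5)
Your argument is correct, but it follows a genuinely different route from the paper's. For the $L^2_{e^{\theta t}}$ bounds you stay within the Fourier-multiplier framework of Lemma~\ref{Lem1.1}, establishing $\mu^2|R_n|^{-2}\le a^{-2}$ and $\lambda_n|R_n|^{-2}\le C_N$ mode by mode; for the $C_{e^{\theta t}}$ bounds you split $v=P_Nv+Q_Nv$ and treat the high modes by a scalar energy estimate (exploiting $b_n>0$) and the low modes by Agmon's interpolation inequality together with the identity $\eb y_n''=h_n-ay_n'-b_ny_n$. The paper instead works with the \emph{full} (infinite-dimensional) unknown $v=e^{\theta t}u$ and never splits into modes: multiplying \eqref{1.8} by $\Dt v$ and moving the indefinite term $\theta(\eb\theta-1)(v,\Dt v)$ to the right-hand side (where it is controlled using the already-established $L^2$-bound on $v$) gives, after one integration over $(-\infty,t]$, both the $C_{e^{\theta t}}$ bounds and the $L^2$ bound on $\Dt v$ in a single stroke; a second multiplication by $v$ yields the $L^2$ bound on $\|v\|_{H^1}$, and the $\Dt^2 u$ term is read off from the equation. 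The paper's route is shorter and avoids altogether the point where your proof is thinnest: the Case~II lower bound $|R_n|^2\ge c_N\lambda_n$, which you assert but do not derive. That bound is in fact true, but it needs the observation that in Case~II one has $\beta/\lambda_n=2-(2\eb\lambda_n)^{-1}\ge(\lambda_{N+1}-\lambda_N)/(2\lambda_{N+1})$, using $\eb(\lambda_N+\lambda_{N+1})\le(\lambda_N+\lambda_{N+1})/(3\lambda_{N+1}+\lambda_N)<1/2$; combining with $\alpha\ge(\lambda_{N+1}-\lambda_N)/2$ gives $\alpha\beta\ge(\lambda_{N+1}-\lambda_N)^2\lambda_n/(4\lambda_{N+1})$. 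You should spell this out, since it is exactly the place where the second spectral gap condition is used in an essential and non-obvious way. The remaining pieces of your argument (Plancherel, the high-mode energy identity, the Agmon step for low modes, the reversion $v\mapsto u$) are sound, and your approach has the pedagogical virtue of being continuous with the spectral analysis of Lemma~\ref{Lem1.1}; the trade-off is that you separate cases and modes where the paper's single energy estimate handles everything at once.
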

\begin{proof} Instead of estimating the solution $u$ of \eqref{1.1} in weighted spaces, it is equivalent to estimate the solution $v$ of problem \eqref{1.8} in the non-weighted spaces. We also remind that $(1-2\theta\eb)=\sqrt{1-2\eb(\lambda_N+\lambda_{N+1})}>0$, so multiplying equation \eqref{1.8} by $\Dt v$, we get
\begin{multline}\label{2.dif}
\frac12\frac d{dt}\(\eb\|\Dt v\|^2_{H}+\|v\|^2_{H^1}\)+\alpha\|\Dt v\|^2_{H}\le\\\le |(\tilde h,\Dt v)|+\frac{\lambda_N+\lambda_{N+1}}2|(v,\Dt v)|\le \frac\alpha2\|\Dt v\|^2_{H}+C_N\(\|\tilde h\|^2_H+\|v\|^2_{H}\)
\end{multline}
for some positive $\alpha$.
Integrating this estimate over time interval $(-\infty,t)$ and using that $v$ vanishes at $-\infty$, we arrive at
\begin{multline}\label{2.int1}
\eb\|\Dt v(t)\|^2_H+\|v(t)\|^2_{H^1}+\alpha\int_{-\infty}^t\|\Dt v(s)\|^2_H\,ds\le\\\le C_N\int_{-\infty}^t\|\tilde h(s)\|^2_{H}\,ds+C_N\int_{-\infty}^t\|v(s)\|^2_{H}\,ds\le \bar C_N\|h\|^2_{\mathcal H_\theta},
\end{multline}
where we have used \eqref{1.7} in order to estimate $v$ in the RHS. To estimate the $L^2$-norm of $\|u(t)\|^2_{H^1}$, we multiply equation \eqref{1.8} by $v(t)$ to get
\begin{equation}
\frac d{dt}\(\eb(v,\Dt v)+\frac{1-2\eb\theta}2\|v\|^2_{H}\)+\|v\|^2_{H^1}=\eb\|\Dt v\|^2_H+\theta(1-\eb\theta)\|v\|^2_H+(\tilde h,v).
\end{equation}
 Integrating this equality over $t\in\R$ and using already established parts of \eqref{1.sm} )obtained in \eqref{2.int1}, we end up with
 $$
 \|u\|^2_{L^2_{e^{\theta t}}(\R,H^1)}\le C\|h\|^2_{L^2_{e^{\theta t}}(\R,H)}.
 $$
 Thus, it only remains to estimate the norm of the second derivative. This follows just by expressing the term $\eb\Dt^2 v$ from \eqref{1.8} and estimating the RHS using the already proved estimate \eqref{2.int1} and the lemma is proved.
\end{proof}
\begin{remark}\label{RemA.1} Arguing exactly as in the proof of Lemma \ref{Cor1.1}, we may obtain the analogous estimate for the solutions $u(t)$ defined on a semiaxis  $\R_-$ only if the estimate for the $L^2_{e^{\theta t}}(\R_-,H)$-norm of $u$ is known. Namely, the following estimate holds:
\begin{multline}\label{1.sm-min}
\eb\|\Dt u\|_{C_{e^{\theta t}}(\R_-,H)}^2+\|u\|_{C_{e^{\theta t}}(\R_-,H^1)}^2+\|u\|^2_{L^2_{e^{\theta t}}(\R_-,H^1)}+\\+\|\Dt u\|_{L^2_{e^{\theta t}}(\R_-,H)}^2+\eb^2\|\Dt^2 u\|^2_{L^2_{e^{\theta t}}(\R_-,H^{-1})}\le C_N(\|h\|_{L^2_{e^{\theta t}}(\R_-,H)}^2+\|u\|^2_{L^2_{e^{\theta t}}(\R_-,H)}).
\end{multline}
Crucial for us here is the fact that this estimate does not  depend explicitly  on the initial data for the backward solution $u$ at $t=0$.
\end{remark}
We now consider the solutions $u(t)$ of equation \eqref{1.1} defined on the negative semiaxis $t\le0$ only and belonging to the space $L^2_{e^{\theta t}}(\R_-,H)$. We start with the case $h\equiv0$.

\begin{lemma}\label{LemA1.s} Let the above assumptions hold. Then, for any $p\in P_N H\sim\R^N$, the following problem:
\begin{equation}\label{1.semi}
\eb\Dt^2 u+\Dt u+Au=0,\ \ P_Nu\big|_{t=0}=p,
\end{equation}
possesses a unique solution $u\in L^2_{e^{\theta t}}(\R_-,H)$ and this solution is given by the following expression:
\begin{equation}\label{1.S}
u(t):=(\mathcal Sp)(t)=\sum_{n=1}^Ne^{\mu^+_n t}(p, e_n)e_n,
\end{equation}
where $\mu^+_n:=\frac{-1+\sqrt{1-4\eb\lambda_n}}{2\eb}$.
\end{lemma}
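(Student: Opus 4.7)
The plan is to expand an arbitrary candidate solution $u(t)\in L^2_{e^{\theta t}}(\R_-,H)$ in the eigenbasis of $A$ and then, mode by mode, identify which scalar ODE solutions can live in the weighted space. Writing $u(t)=\sum_{n=1}^\infty u_n(t)e_n$ with $u_n(t)=(u(t),e_n)$, the equation decouples to
\begin{equation*}
\eb u_n''(t)+u_n'(t)+\lambda_n u_n(t)=0,\qquad n\in\Bbb N,
\end{equation*}
whose general solution is $u_n(t)=p_n e^{\mu_n^+ t}+q_n e^{\mu_n^- t}$ with $\mu_n^{\pm}=(-1\pm\sqrt{1-4\eb\lambda_n})/(2\eb)$, to be interpreted via the usual basis of $\cos/\sin$ when $1-4\eb\lambda_n<0$. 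A scalar exponential $e^{\mu t}$ lies in $L^2_{e^{\theta t}}(\R_-)$ iff $\operatorname{Re}\mu+\theta>0$, so the proof reduces to checking how the numbers $\operatorname{Re}\mu_n^{\pm}$ compare to $-\theta$.

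Next I would carry out this comparison using the two assumptions. For $n\le N$, the inequality $4\eb\lambda_N\le \eb(3\lambda_{N+1}+\lambda_N)\le 1$ implies that $\mu_n^{\pm}$ are real, and $\mu_n^-\le -\tfrac{1}{2\eb}$. The defining equation \eqref{1.6} for $\theta$ can be rewritten as $-\theta=\frac{-1+\sqrt{1-2\eb(\lambda_N+\lambda_{N+1})}}{2\eb}$, and since $\lambda_{N+1}>\lambda_N$ we have $2\eb(\lambda_N+\lambda_{N+1})<1$, so $-\tfrac{1}{2\eb}<-\theta$; hence $\mu_n^-<-\theta$ and the term $q_n e^{\mu_n^- t}$ is inadmissible. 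On the other hand, for $n\le N$ the inequality $4\lambda_n\le 2(\lambda_N+\lambda_{N+1})$ gives $\mu_n^+\ge -\theta$, with strict inequality for $n<N$ and also for $n=N$ because $\lambda_{N+1}>\lambda_N$; thus $e^{\mu_n^+ t}$ is admissible, and $p_n$ is a free real parameter.

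For $n>N$, a symmetric comparison must exclude both exponentials. When $1-4\eb\lambda_n\ge 0$, the inequality $2\lambda_n\ge 2\lambda_{N+1}>\lambda_N+\lambda_{N+1}$ flips the square-root comparison, giving $\mu_n^+<-\theta$ and hence also $\mu_n^-<-\theta$. When $1-4\eb\lambda_n<0$, both roots have real part $-\tfrac{1}{2\eb}<-\theta$, and the corresponding oscillatory solutions $e^{-t/(2\eb)}\cos(\omega_n t),\,e^{-t/(2\eb)}\sin(\omega_n t)$ are not in $L^2_{e^{\theta t}}(\R_-)$. Thus $p_n=q_n=0$ is forced for $n>N$.

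Combining these steps, every solution $u\in L^2_{e^{\theta t}}(\R_-,H)$ must be of the form $u(t)=\sum_{n=1}^N p_n e^{\mu_n^+ t}e_n$, and matching the initial data $P_Nu|_{t=0}=p$ uniquely identifies $p_n=(p,e_n)$, yielding \eqref{1.S}. The main technical issue, which is where I would be most careful, is the case distinction at $n=N+1$ and the possibility that some $\mu_n^\pm$ with $n>N$ becomes complex; both cases have to be handled so that the candidate $-\theta$ provided by \eqref{1.6} strictly separates the modes that must vanish from the $N$ that survive, and for this one needs the stronger form $\eb(3\lambda_{N+1}+\lambda_N)\le 1$ (which gives $\tfrac{1}{2\eb}\ge \tfrac{\lambda_N+\lambda_{N+1}}{2}+\lambda_{N+1}$) rather than merely $4\eb\lambda_N<1$.
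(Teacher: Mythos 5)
Your proof is correct and follows essentially the same route as the paper's: decompose $u$ in the eigenbasis of $A$, reduce to the scalar ODEs $\eb u_n''+u_n'+\lambda_n u_n=0$ with general solution $p_n e^{\mu_n^+t}+q_ne^{\mu_n^-t}$, and decide mode by mode which exponentials lie in $L^2_{e^{\theta t}}(\R_-)$. The paper compresses the admissibility discussion into the one-line claim that for $n\le N$ the exponent $\theta$ sits strictly between $-\mu_n^+$ and $-\mu_n^-$ (the paper's line $-\mu_n^-<\theta<-\mu_n^+$ has the two signs transposed; your version is the correct one), while you spell out the comparison $\operatorname{Re}\mu_n^\pm$ versus $-\theta$ explicitly using $-\theta=\bigl(-1+\sqrt{1-2\eb(\lambda_N+\lambda_{N+1})}\bigr)/(2\eb)$, including the complex case for $n>n_{cr}$. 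Your remark that one needs $2\eb(\lambda_N+\lambda_{N+1})<1$ (guaranteed by the stronger hypothesis $\eb(3\lambda_{N+1}+\lambda_N)\le1$) for $\theta$ to be real and the separation to be strict is a correct and useful clarification that the paper leaves implicit.
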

\begin{proof}Indeed, in the Fourier basis equation \eqref{1.semi} reads
\begin{equation}\label{2.22}
\eb u_n''(t)+u_n'(t)+\lambda_n u_n(t)=0,\ \ \ n\in\Bbb N,
\end{equation}
and its  general solution  is given by
\begin{equation}\label{2.23}
u_n(t)=p_ne^{\mu_n^+ t}+q_n e^{\mu_n^- t},\ \ \mu_n^{\pm}:=\frac{-1\pm\sqrt{1-4\eb\lambda_n}}{2\eb}.
\end{equation}
It is not difficult to see that for $n\ge N+1$ both exponents $e^{\mu_n^{\pm}t}$ grow faster than $e^{-\theta t}$ as $t\to-\infty$, so $u_n(t)=0$ is the unique solution of \eqref{2.22} satisfying the desired property. For $n\le N$, we have $-\mu_n^{-}<\theta<-\mu_n^+$. Thus, there is a one-parameter family of desired solutions of \eqref{2.22} given by \eqref{1.S} which is uniquely determined by the initial condition $P_Nu\big|_{t=0}=p$ and the lemma is proved.
\end{proof}
We now reformulate the initial condition $P_Nu(0)=p$ in the form which allows us to study the non-homogeneous equations as well. Namely, as follows from \eqref{2.23}, see also \eqref{21.24},
\begin{equation}\label{2.24}
\frac{u_n'(0)-\mu_n^{-}u_n(0)}{\mu_n^{+}-\mu_n^-}=\frac{\eb}{\sqrt{1-4\eb\lambda_n}}u_n'(0)+
\frac{1+\sqrt{1-4\eb\lambda_n}}{2\sqrt{1-4\eb\lambda_n}}u_n(0)=p_n.
\end{equation}
Thus, introducing the linear operators $\Cal P_N : H\to H$ and $\widehat{\Cal P}_N: H\to H$ via \eqref{1.eigenpro},
we rewrite the initial data for problem \eqref{1.semi} in the equivalent form
$$
\widehat{\mathcal P}_N\Dt u\big|_{t=0}+\mathcal P_N u\big|_{t=0}=p.
$$
We now turn to the non-homogeneous version of problem \eqref{1.semi}.
\begin{lemma}\label{LemA.lin} Let the above assumptions hold. Then, for every $p\in P_NH$ and every $h\in L^2_{e^{\theta t}}(\R_-,H))$, problem
\begin{equation}\label{A.main-lin}
\eb\Dt^2 u+\Dt u+A u=h(t),\ \ t\le 0,\ \ \widehat{\mathcal P}_N\Dt u\big|_{t=0}+\mathcal P_N u\big|_{t=0}=p
\end{equation}
possesses a unique solution $u\in L^2_{e^{\theta t}}(\R_-,H)$. This solution can be written in the form
\begin{equation}\label{A.main}
u=\mathcal Sp+\mathcal L h,
\end{equation}
where $\mathcal L$ is defined in Lemma \ref{Lem1.1} and $h$ is extended by zero for positive values of $t$. Moreover, the following estimate holds:
\begin{equation}\label{A.kest}
\|u\|_{L^2_{e^{\theta t}}(\R_-,H)}\le \frac2{\lambda_{N+1}-\lambda_N}\|h\|_{L^2_{e^{\theta t}}(\R_-,H)}+C\|p\|_{H},
\end{equation}
where the constant $C$ may depend on $N$, but is independent of $h$, $p$ and $\eb$.
\end{lemma}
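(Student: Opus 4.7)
The plan is to produce the solution by the explicit ansatz \eqref{A.main} and then check separately (i) that it satisfies the equation on $\R_-$, (ii) that it satisfies the boundary condition at $t=0$, (iii) the estimate \eqref{A.kest}, and (iv) uniqueness. Let $\tilde h\in\mathcal H_\theta$ denote the extension of $h$ by zero to $\R_+$, so that $\tilde h$ is square–integrable against the weight $e^{2\theta t}$ on all of $\R$ (with the same norm as $h$ on $\R_-$). Set $u:=\mathcal Sp+\mathcal L\tilde h$. By Lemma~\ref{Lem1.1} the second summand satisfies the full–line equation with right–hand side $\tilde h$, which coincides with $h$ for $t\le 0$, while by Lemma~\ref{LemA1.s} the first summand solves the homogeneous equation on $\R_-$. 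Both lie in $L^2_{e^{\theta t}}(\R_-,H)$: for $\mathcal S p$ this is a direct computation from \eqref{1.S}, since for every $n\le N$ the gap condition implies $\theta+\mu_n^+>0$, so each exponential $e^{\mu_n^+t}$ contributes a finite weighted $L^2$–norm on $\R_-$ controlled by $\|p\|_H$.

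The key verification is the boundary condition. For $\mathcal Sp$ it holds by construction, and one has to check that $\widehat{\mathcal P}_N\Dt(\mathcal L\tilde h)(0)+\mathcal P_N(\mathcal L\tilde h)(0)=0$. Since $\widehat{\mathcal P}_N$ and $\mathcal P_N$ only see modes $n\le N$, fix such an $n$ and write $y_n(t):=(\mathcal L\tilde h(t),e_n)$. Because $\tilde h\equiv 0$ on $\R_+$, the function $y_n$ solves the homogeneous ODE $\eb y_n''+y_n'+\lambda_n y_n=0$ on $(0,\infty)$, so $y_n(t)=a_n e^{\mu_n^+t}+b_n e^{\mu_n^-t}$ there. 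The membership $y_n\in L^2_{e^{\theta t}}(\R_+)$ and the inequality $\theta>-\mu_n^+$ (which follows from \eqref{1.6} combined with $\lambda_n\le\lambda_N<\lambda_{N+1}$, as can be read off the explicit square–root formulas for $\theta$ and $\mu_n^+$) force $a_n=0$. Substituting $y_n(t)=b_ne^{\mu_n^-t}$ into the expression \eqref{2.24} at $t=0$ produces exactly zero, which is the desired identity. Hence the two contributions to the boundary trace combine to give $p$.

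The estimate \eqref{A.kest} is now immediate. From Lemma~\ref{Lem1.1} one has
\[
\|\mathcal L\tilde h\|_{L^2_{e^{\theta t}}(\R_-,H)}\le \|\mathcal L\tilde h\|_{\mathcal H_\theta}\le\frac{2}{\lambda_{N+1}-\lambda_N}\|\tilde h\|_{\mathcal H_\theta}=\frac{2}{\lambda_{N+1}-\lambda_N}\|h\|_{L^2_{e^{\theta t}}(\R_-,H)},
\]
and the direct bound $\|\mathcal Sp\|_{L^2_{e^{\theta t}}(\R_-,H)}\le C\|p\|_H$ is obtained term by term from \eqref{1.S}, with $C$ depending only on $N$ and on the distances $\theta+\mu_n^+>0$ (uniformly in $\eb\to0$ by the explicit formulas).

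It remains to prove uniqueness. Let $w$ be the difference of two solutions, so that $w$ solves the homogeneous equation on $\R_-$ with $\widehat{\mathcal P}_N\Dt w(0)+\mathcal P_N w(0)=0$ and $w\in L^2_{e^{\theta t}}(\R_-,H)$. Expanding in eigenfunctions, each mode is of the form $w_n(t)=p_ne^{\mu_n^+t}+q_ne^{\mu_n^-t}$. For $n\le N$ the inequality $\theta<-\mu_n^-$ (since $-\mu_n^-\ge 1/(2\eb)>\theta$) forces $q_n=0$; then identity \eqref{2.24} translates the zero boundary trace into $p_n=0$. For $n>N$, the spectral gap gives $\theta<-\mu_n^+$ in the real case and $\theta<1/(2\eb)=-\operatorname{Re}\mu_n^\pm$ in the complex case; in both cases both coefficients must vanish for weighted integrability at $-\infty$. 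Hence $w\equiv0$. The only nontrivial step is the boundary computation in paragraph two: it is where the identification between the "stable" root $\mu_n^-$ (on $\R_+$) and the operator pair $(\widehat{\mathcal P}_N,\mathcal P_N)$ has to be made explicit, and this is the place where the second assumption of \eqref{1.5} is really used through the comparison of $\theta$ with the $\mu_n^\pm$.
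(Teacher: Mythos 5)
Your proof is correct and takes essentially the same route as the paper: both reduce to the explicit formula $u=\mathcal Sp+\mathcal L\tilde h$, both use the mode-by-mode weighted-integrability argument to show that the boundary trace $\widehat{\mathcal P}_N\Dt(\mathcal L\tilde h)(0)+\mathcal P_N(\mathcal L\tilde h)(0)$ vanishes (the paper phrases existence and uniqueness in a single step via Lemma~\ref{LemA1.s}, whereas you construct and then prove uniqueness separately, but the content is identical), and both deduce \eqref{A.kest} from Lemma~\ref{Lem1.1} together with the elementary bound \eqref{A-S} on $\mathcal S$. One small inaccuracy in your closing remark: the comparison of $\theta$ with $\mu_n^\pm$ for $n\le N$ in paragraph two only requires these quantities to be real and $\lambda_n<(\lambda_N+\lambda_{N+1})/2$, which follow from the weaker inequalities $\frac1\eb\ge 4\lambda_N$ and $\frac1\eb\ge 2(\lambda_N+\lambda_{N+1})$; the full strength $\frac1\eb\ge 3\lambda_{N+1}+\lambda_N$ of the second assumption in \eqref{1.5} is actually consumed inside the proof of Lemma~\ref{Lem1.1} (Case~II), which you invoke but do not reprove.
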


\begin{proof} Indeed, let $\tilde u:=\mathcal L h$. Then, since $h$ is extended by zero for $t\ge0$, this function solves
$$
\eb\Dt^2 u+\Dt u+Au=0,\ \ t\ge0.
$$
Moreover, $\tilde u\in L^2_{e^{\theta t}}(\R,H)\subset L^2_{e^{\theta t}}(\R_+,H)$. The Fourier components $\tilde u_n(t)$ have the form \eqref{2.23} at least for $n\le N$ and in order to belong to the space $L^2_{e^{\theta t}}(\R_+,H)$, they should satisfy   $p_n=0$ for all $n\le N$. Therefore, by the definition of the operators $\widehat{\mathcal P}_N$ and $\mathcal P_N$, we have
$$
\widehat {\mathcal P}_N\Dt \tilde u(0)+\mathcal P_N\tilde u(0)=0.
$$
Thus, the difference $\bar u:=u-\tilde u$ satisfies
$$
\eb\Dt^2\bar u+\Dt\bar u+A\bar u=0,\ \ \widehat {\mathcal P}_N\Dt \bar u(0)+\mathcal P_N\bar u(0)=p
$$
and by Lemma \ref{LemA1.s}, $\bar u=\mathcal Sp$. This gives the unique solvability of problem \eqref{A.main-lin} as well as formula \eqref{A.main}. The key estimate \eqref{A.kest} follows now from Lemma \ref{Lem1.1} and the elementary fact that
\begin{equation}\label{A-S}
\|\mathcal S\|_{\mathcal L(H,L^2_{e^{\theta t}}(\R_-,H))}\le C,
\end{equation}
where the constant $C$ is independent of $\eb$ and the lemma is proved.
\end{proof}
The next corollary gives the extra smoothness analogously to Lemma \ref{Cor1.1}.

\begin{corollary}\label{CorA.7} Let the assumptions of the previous lemma hold. Then, the solution $u$ satisfies
\begin{multline}\label{A.est}
\eb\|\Dt u\|_{C_{e^{\theta t}}(\R_-,H)}^2+\|u\|_{C_{e^{\theta t}}(\R_-,H^1)}^2+\|u\|^2_{L^2_{e^{\theta t}}(\R_-,H^1)}+\\+\|\Dt u\|_{L^2_{e^{\theta t}}(\R_-,H)}^2+\eb^2\|\Dt^2 u\|^2_{L^2_{e^{\theta t}}(\R_-,H^{-1})}\le C_N(\|h\|_{L^2_{e^{\theta t}}(\R_-,H)}^2+\|p\|_{H}^2),
\end{multline}
where the constant $C_N$ depends on $N$, but is independent of $\eb\to0$ $p$ and $h$.
\end{corollary}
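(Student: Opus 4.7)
The plan is to exploit the decomposition $u = \mathcal{S}p + \mathcal{L}h$ provided by Lemma \ref{LemA.lin} (where $h$ is extended by zero for $t>0$) and to bound each of the two summands separately.

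For the inhomogeneous part $u_2 := \mathcal{L}h$, the function $u_2$ solves \eqref{1.1} on the whole real line with right-hand side living in $\mathcal{H}_\theta$. Consequently, Lemma \ref{Cor1.1} applies verbatim and already delivers every term on the left-hand side of \eqref{A.est} bounded by $C_N \|h\|^2_{L^2_{e^{\theta t}}(\R,H)} = C_N \|h\|^2_{L^2_{e^{\theta t}}(\R_-,H)}$, with constants uniform in $\eb\to 0$.

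For the homogeneous part $u_1 := \mathcal{S}p = \sum_{n=1}^{N}(p,e_n)\,e^{\mu_n^+ t}\, e_n$, I would argue by direct computation from the explicit formula. The rationalization $\mu_n^+ = \frac{-2\lambda_n}{1+\sqrt{1-4\eb\lambda_n}}$ combined with \eqref{1.5} (which implies $4\eb\lambda_n < 1$ for $n\le N$) yields $|\mu_n^+|\le 2\lambda_N$ uniformly in $\eb$. Moreover, the choice of $\theta$ in \eqref{1.6} and a short calculation show that $\theta + \mu_n^+ \ge c_N > 0$ for $n\le N$, where $c_N$ depends only on $N$ and the spectral gap $\lambda_{N+1}-\lambda_N$, not on $\eb$. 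Therefore each integral $\int_{-\infty}^0 e^{2(\theta+\mu_n^+)t}\,dt$ is uniformly controlled, which immediately gives the $L^2_{e^{\theta t}}$ and $C_{e^{\theta t}}$ bounds for $u_1$ itself. Since $\Dt u_1$ and $\Dt^2 u_1$ are again finite sums of the same exponentials with additional coefficients $\mu_n^+$ and $(\mu_n^+)^2$, both bounded by $2\lambda_N$ and $4\lambda_N^2$ respectively, and since the $H^s$-norms on the finite-dimensional subspace $P_N H$ are all equivalent, one gets the corresponding estimates for the derivatives as well; the prefactors $\eb$ and $\eb^2$ only help.

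Adding the estimates for $u_1$ and $u_2$ via the triangle inequality then yields \eqref{A.est}. The only point that requires a little care is making sure that all constants are genuinely independent of $\eb\to 0$; this is automatic for $\mathcal{L}h$ by Lemma \ref{Cor1.1}, while for $\mathcal{S}p$ it reduces to the uniform bounds on $|\mu_n^+|$ and on $\theta + \mu_n^+$ from below discussed above, which is the one spot where assumption \eqref{1.5} is essential (as opposed to merely needing $\theta$ real).
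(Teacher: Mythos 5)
Your argument is correct, but it takes a genuinely different route from the paper. The paper's proof is a one-line citation: it combines the $L^2_{e^{\theta t}}(\R_-,H)$-bound on $u$ from \eqref{A.kest} with the semiaxis energy estimate \eqref{1.sm-min} of Remark \ref{RemA.1}, which controls every term on the left of \eqref{A.est} in terms of $\|h\|_{L^2_{e^{\theta t}}(\R_-,H)}$ and $\|u\|_{L^2_{e^{\theta t}}(\R_-,H)}$. You instead unfold the representation $u=\mathcal Sp+\mathcal Lh$ from \eqref{A.main} and estimate the two pieces separately: the full-line Lemma \ref{Cor1.1} handles $\mathcal Lh$ after restricting from $\R$ to $\R_-$, and a direct spectral computation handles $\mathcal Sp$. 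Your explicit computation for $\mathcal Sp$ is sound: the uniform bounds $|\mu_n^+|\le 2\lambda_N$ (for $n\le N$, since $4\eb\lambda_n<1$ follows from \eqref{1.5}) and $\theta+\mu_n^+=\dfrac{\lambda_N+\lambda_{N+1}-2\lambda_n}{\sqrt{1-4\eb\lambda_n}+\sqrt{1-2\eb(\lambda_N+\lambda_{N+1})}}\ge\dfrac{\lambda_{N+1}-\lambda_N}{2}$ are exactly what makes the $C_{e^{\theta t}}$- and $L^2_{e^{\theta t}}$-norms of $e^{\mu_n^+t}$, $\mu_n^+e^{\mu_n^+t}$, $(\mu_n^+)^2e^{\mu_n^+t}$ on $\R_-$ finite and uniformly controlled, and equivalence of $H^s$-norms on $P_NH$ closes the gap. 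The trade-off is that your version avoids invoking the ``semiaxis'' variant \eqref{1.sm-min} of the energy method (which the paper states only as a remark without a fully written proof) at the cost of a slightly longer but entirely explicit computation for the homogeneous part; both are legitimate, and yours is, if anything, more self-contained.
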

\begin{proof} Indeed, this is an immediate corollary of estimates \eqref{A.kest} and \eqref{1.sm-min}.
\end{proof}
At the next step, we recall that the norm in our energy phase space $\mathcal E_\eb$ is given by
$$
\|\xi_u\|_{\mathcal E_\eb}^2:=\eb\|\Dt u\|^2_H+\|\Dt u\|^2_{H^{-1}}+\|u\|^2_{H^1},\
 \ \xi_u:=(u,\Dt u).
$$
The corollary below gives the uniform estimate for this norm for the solutions of \eqref{A.main-lin} under the extra assumptions on the right-hand side $h$.
\begin{corollary}\label{CorA.ee} Let the assumptions of Lemma \ref{LemA.lin} hold and let, in addition, the function $h\in L^\infty_{e^{\theta t}}(\R_-,H^{-1})$. Then, the following estimate holds:
\begin{equation}\label{A.phaseest}
\|\xi_u(t)\|^2_{\mathcal E_\eb}\le Ce^{-2\theta t}\(\|h\|^2_{L^2_{e^{\theta t}}(\R_-,H)}+\|h\|^2_{L^\infty_{e^{\theta t}}(\R_-,H^{-1})}+\|p\|^2_H\),
\end{equation}
where the constant $C$ depends on $N$, but is independent of $\eb\to0$.
\end{corollary}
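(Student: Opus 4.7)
The strategy is to split $\|\xi_u(t)\|^2_{\mathcal E_\eb}$ into its three defining pieces and treat them separately. The pieces $\eb\|\Dt u(t)\|^2_H$ and $\|u(t)\|^2_{H^1}$ are already under control from Corollary \ref{CorA.7}: converting the $C_{e^{\theta t}}(\R_-,\cdot)$ norms bounded there into pointwise inequalities with weight $e^{-2\theta t}$ immediately produces the required bound for these two terms. So the whole task collapses to estimating $\|\Dt u(t)\|^2_{H^{-1}}$ uniformly as $\eb\to 0$, and this is precisely where the new hypothesis $h\in L^\infty_{e^{\theta t}}(\R_-,H^{-1})$ becomes essential; the naive bound $\|\Dt u\|^2_{H^{-1}}\le\lambda_1^{-1}\|\Dt u\|^2_H$ is not good enough since it would inherit a factor $\eb^{-1}$ from Corollary \ref{CorA.7}.

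For the $H^{-1}$ bound, I would read equation \eqref{A.main-lin} as a first-order linear ODE in $v:=\Dt u$, namely $\eb\Dt v+v = h-Au$. Variation of parameters with the integrating factor $\eb^{-1}e^{t/\eb}$ yields the Duhamel representation
$$
\Dt u(t) = \eb^{-1}\int_{-\infty}^t e^{-(t-\tau)/\eb}\bigl(h(\tau)-Au(\tau)\bigr)\,d\tau,
$$
where the vanishing of the boundary term at $s\to-\infty$ is justified by $\Dt u\in C_{e^{\theta t}}(\R_-,H)$ (Corollary \ref{CorA.7}) together with $1/\eb>\theta$, the latter being a direct consequence of \eqref{1.6}. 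Taking $H^{-1}$-norms inside the integral, using $\|h(\tau)\|_{H^{-1}}\le e^{-\theta\tau}\|h\|_{L^\infty_{e^{\theta t}}(\R_-,H^{-1})}$ together with the pointwise bound $\|u(\tau)\|_{H^1}\le Ce^{-\theta\tau}(\|h\|_{L^2_{e^{\theta t}}(\R_-,H)}+\|p\|_H)$ extracted from Corollary \ref{CorA.7}, the estimate reduces to the computation of an explicit scalar convolution: $\eb^{-1}\int_{-\infty}^t e^{-(t-\tau)/\eb}e^{-\theta\tau}d\tau = e^{-\theta t}/(1-\eb\theta)$. Squaring then combines with the two earlier pointwise bounds.

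The main obstacle, and the spot where the hypotheses really bite, is showing that $1-\eb\theta$ is bounded below by a positive constant uniformly in $\eb\to 0$; without this the innocuous-looking final integral would blow up. To handle it I would use \eqref{1.6} to rewrite $\eb\theta = \tfrac{1}{2}\bigl(1-\sqrt{1-2\eb(\lambda_N+\lambda_{N+1})}\bigr)$ and apply the condition $\tfrac1\eb\ge 3\lambda_{N+1}+\lambda_N$ from \eqref{1.5}, together with $\lambda_N\le\lambda_{N+1}$, to conclude $2\eb(\lambda_N+\lambda_{N+1})\le 1$, whence $\eb\theta\le \tfrac12$ and $1-\eb\theta\ge \tfrac12$ independently of $\eb$. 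Combining this uniform bound with the two pieces from Corollary \ref{CorA.7} gives the claimed estimate \eqref{A.phaseest}, with a constant depending on $N$ but not on $\eb$.
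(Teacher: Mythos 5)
Your proposal is correct, and it takes a recognizably different route for the key step from the one in the paper. Both arguments begin the same way, reading the bounds on $\eb\|\Dt u\|^2_{C_{e^{\theta t}}(\R_-,H)}$ and $\|u\|^2_{C_{e^{\theta t}}(\R_-,H^1)}$ in Corollary \ref{CorA.7} as pointwise estimates with weight $e^{-2\theta t}$, so that everything reduces to controlling $\|\Dt u(t)\|^2_{H^{-1}}$. Where you diverge is in how you exploit the fast relaxation scale $\eb$. The paper's proof multiplies \eqref{A.main-lin} by $A^{-1}\Dt u$, obtains the differential inequality $\eb\frac{d}{dt}\|\Dt u\|^2_{H^{-1}}+\|\Dt u\|^2_{H^{-1}}\le 2(\|u\|^2_{H^1}+\|h\|^2_{H^{-1}})$, and integrates it over $[t-1,t]$, producing a boundary-layer estimate whose $\eb$-uniformity comes from the elementary fact that $\eb^{-1}\int_0^\infty e^{-s/\eb}\,ds=1$ (so no lower bound on $1-\eb\theta$ is needed). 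Your proof instead writes the Duhamel representation $\Dt u(t)=\eb^{-1}\int_{-\infty}^t e^{-(t-\tau)/\eb}\bigl(h(\tau)-Au(\tau)\bigr)\,d\tau$, correctly justifies the vanishing boundary term at $-\infty$ from $\Dt u\in C_{e^{\theta t}}(\R_-,H)$ and $1/\eb>\theta$, and then reduces the matter to the explicit scalar integral $\eb^{-1}\int_{-\infty}^t e^{-(t-\tau)/\eb}e^{-\theta\tau}\,d\tau=e^{-\theta t}/(1-\eb\theta)$, paying the extra (but easy and correct) cost of checking that $1-\eb\theta\ge\tfrac12$ via \eqref{1.5} and \eqref{1.6}. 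The energy-estimate route is closer in style to the rest of the paper and dodges the question of the size of $\eb\theta$; your Duhamel route is more explicit, makes the boundary-layer mechanism transparent, and delivers a linear (rather than squared) pointwise inequality before squaring, which some readers will find cleaner. Both are sound, and the constants behave as required, so the corollary follows.
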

\begin{proof} The desired estimate for the terms $\eb\|\Dt u(t)\|^2_H+\|u(t)\|^2_{H^1}$ is obtained in \eqref{A.est}, so we only need to estimate the term $\|\Dt u(t)\|^2_{H^{-1}}$. To this end, we multiply equation \eqref{A.main-lin} by $A^{-1}\Dt u$ and get
\begin{equation}
\eb\frac d{dt}\|\Dt u\|^2_{H^{-1}}+2\|\Dt u\|^2_{H^{-1}}=-2(u,\Dt u)+2(h,A^{-1}\Dt u)\le \|\Dt u\|^2_{H^{-1}}+2(\|u\|^2_{H^1}+\|h\|^2_{H^{-1}}).
\end{equation}
Integrating this inequality over $[t-1,t]$, we arrive at the following boundary layer type estimate:
\begin{equation}
\|\Dt u(t)\|^2_{H^{-1}}\le \eb^{-1}e^{-\frac1\eb}\|\Dt u(t-1)\|^2_{H^{-1}}+2\eb^{-1}\int_{t-1}^te^{-\eb^{-1}(t-s)}(\|h(s)\|^2_{H^{-1}}+\|u(s)\|^2_{H^{1}})\,ds
\end{equation}
which finally gives us that
\begin{equation}
\|\Dt u(t)\|^2_{H^{-1}}\le C\eb\|\Dt u(t-1)\|^2_{H}+C(\|h\|_{L^\infty(t-1,t;H^{-1})}^2+\|u\|^2_{L^\infty(t-1,t;H^1)}).
\end{equation}
This estimate together with \eqref{A.est} gives the desired control for the $\|\Dt u(t)\|_{H^{-1}}$ and finishes the proof of the corollary.
\end{proof}
We conclude the Appendix by stating the analogous estimates for the case where the right-hand side $h$ is more regular in time.
\begin{corollary}\label{CorA.reg} Let the assumptions of Lemma \ref{LemA.lin} hold and let $\Dt h\in L^2_{e^{\theta t}}(\R_-,H)$. Then the following estimate is valid for the solution $u(t)$:
 \begin{multline}\label{A.est-reg}
\eb\|\Dt^2 u\|_{C_{e^{\theta t}}(\R_-,H)}^2+\|\Dt u\|_{C_{e^{\theta t}}(\R_-,H^1)}^2+\|\Dt u\|^2_{L^2_{e^{\theta t}}(\R_-,H^1)}+\|u\|^2_{L^2_{e^{\theta t}}(\R_-,H^2)}\\+\|u\|^2_{C_{e^{\theta t}}(\R_-,H^2)}+\|\Dt^2 u\|_{L^2_{e^{\theta t}}(\R_-,H)}^2+\eb^2\|\Dt^3 u\|^2_{L^2_{e^{\theta t}}(\R_-,H^{-1})}\le C_N(\|h\|_{W^{1,2}_{e^{\theta t}}(\R_-,H)}^2+\|p\|_{H}^2),
\end{multline}
where the constant $C_N$ depends on $N$, but is independent of $\eb\to0$, $p$ and $h$.
\end{corollary}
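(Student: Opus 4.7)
The plan is to reduce the claim to Corollary~\ref{CorA.7} applied to the time-derivative $w := \Dt u$. Differentiating \eqref{A.main-lin} formally in time shows that $w$ satisfies
$$
\eb \Dt^2 w + \Dt w + A w = \Dt h(t), \quad t \le 0,
$$
which has exactly the same form as \eqref{A.main-lin} with $\Dt h$ in place of $h$. The rigorous justification proceeds by mollifying $h$ in time, running the argument for the (then classical) smooth approximations, and passing to the limit via the uniform bounds provided by Corollary~\ref{CorA.7}. Once the appropriate boundary condition $\widehat{\mathcal P}_N \Dt w(0) + \mathcal P_N w(0) = p'$ is identified and $\|p'\|_H$ is controlled, Corollary~\ref{CorA.7} applied to $w$ immediately yields all estimates in \eqref{A.est-reg} involving $\Dt u$, $\Dt^2 u$ and $\Dt^3 u$.

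The crux is the identification and estimation of $p'$. Since $p' = \widehat{\mathcal P}_N \Dt^2 u(0) + \mathcal P_N \Dt u(0)$ and, from \eqref{A.main-lin} at $t=0$, $\eb \Dt^2 u(0) = h(0) - \Dt u(0) - A u(0)$, the definition \eqref{1.eigenpro} of $\widehat{\mathcal P}_N$ gives
$$
\widehat{\mathcal P}_N \Dt^2 u(0) = \sum_{n=1}^N \frac{1}{\sqrt{1 - 4\eb\lambda_n}} \bigl[(h(0), e_n) - (\Dt u(0), e_n) - \lambda_n (u(0), e_n)\bigr] e_n.
$$
Assumption \eqref{1.5} forces $4\eb\lambda_N < 1$, so the coefficients $1/\sqrt{1 - 4\eb\lambda_n}$ for $n \le N$ are bounded uniformly in $\eb$. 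Since $h \in W^{1,2}_{e^{\theta t}}(\R_-, H)$ implies $h \in C_{e^{\theta t}}(\R_-, H)$ by the one-dimensional Sobolev embedding in time, $h(0)$ is well-defined and $\|h(0)\|_H \le C\|h\|_{W^{1,2}_{e^{\theta t}}}$. Because the projection onto the finite-dimensional space $P_N H$ allows us to replace $\|\cdot\|_H$ by any weaker norm at the cost of $N$-dependent constants, the $\eb$-uniform bounds on $\|u(0)\|_{H^1}$ and $\|\Dt u(0)\|_{H^{-1}}$ supplied by Corollaries~\ref{CorA.7} and~\ref{CorA.ee} are enough to conclude that
$$
\|p'\|_H \le C_N\bigl(\|h\|_{W^{1,2}_{e^{\theta t}}(\R_-,H)} + \|p\|_H\bigr).
$$

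Finally, the remaining $H^2$-bounds on $u$ are obtained algebraically from the equation rewritten as $Au = h - \Dt u - \eb \Dt^2 u$. Taking $H$-norms pointwise gives
$$
\|u(t)\|_{H^2} \le \|h(t)\|_H + \|\Dt u(t)\|_H + \eb \|\Dt^2 u(t)\|_H,
$$
so the desired $L^2_{e^{\theta t}}$ and $C_{e^{\theta t}}$ norms follow from the already-established bounds (using the trivial inequality $\eb \|\Dt^2 u\|_{C_{e^{\theta t}}(\R_-,H)} \le \sqrt{\eb}\,(\eb \|\Dt^2 u\|^2_{C_{e^{\theta t}}(\R_-,H)})^{1/2}$ to preserve the $\eb \to 0$ uniformity). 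The main delicate points are the rigorous justification of the time-differentiation and the $\eb$-uniform control of $\|p'\|_H$ in the presence of the factor $1/\eb$ produced when one solves for $\Dt^2 u(0)$; both are resolved by exploiting that one is projecting onto the finite-dimensional space $P_N H$, where norms are equivalent with $N$-dependent but $\eb$-independent constants, and by using the built-in $\eb$-weight inside the definition of $\widehat{\mathcal P}_N$ to cancel the $1/\eb$ appearing from the equation.
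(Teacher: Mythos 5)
Your proof is correct, but it takes a genuinely different and slightly longer route than the paper. The paper applies Remark~\ref{RemA.1} (estimate \eqref{1.sm-min}) to the differentiated equation $\eb\Dt^2 v + \Dt v + Av = \Dt h$, $v = \Dt u$; the whole point of that remark is that the higher-order estimate requires \emph{only} the $L^2_{e^{\theta t}}(\R_-,H)$-norm of $v$ itself (which is already controlled by Corollary~\ref{CorA.7} applied to the original equation) and never refers to any boundary condition at $t=0$. This sidesteps entirely the question of what boundary data $v$ carries. You instead invoke the full Corollary~\ref{CorA.7} for $v$, which forces you to identify and estimate the effective boundary datum $p' = \widehat{\mathcal P}_N\Dt^2 u(0) + \mathcal P_N \Dt u(0)$. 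Your treatment of this is sound: the explicit $\eb$ in the definition of $\widehat{\mathcal P}_N$ does indeed cancel the $1/\eb$ that appears when solving the equation for $\Dt^2 u(0)$, the coefficients $1/\sqrt{1-4\eb\lambda_n}$ are uniformly bounded for $n\le N$ and $\eb\le (3\lambda_{N+1}+\lambda_N)^{-1}$, and the passage from $\|\Dt u(0)\|_{H^{-1}}$ (controlled by Corollary~\ref{CorA.ee}, whose extra hypothesis $h\in L^\infty_{e^{\theta t}}(\R_-,H^{-1})$ is indeed implied by $h\in W^{1,2}_{e^{\theta t}}(\R_-,H)$ via the Sobolev embedding in time) to $\|P_N\Dt u(0)\|_H$ is licensed by the equivalence of norms on the finite-dimensional space $P_NH$. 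Your final step recovering the $H^2$-norms algebraically from the equation matches the paper's. What the paper's route buys is economy: it avoids the Sobolev embedding for $h$, avoids invoking Corollary~\ref{CorA.ee}, and avoids the whole $p'$-computation. What yours buys is that it makes completely explicit what the boundary datum of the differentiated problem is, which may be instructive but is ultimately unnecessary overhead here.
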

\begin{proof} Indeed, differentiating \eqref{A.main-lin} in time and denoting $v(t):=\Dt u(t)$, we arrive at the equation
\begin{equation}\label{A.diff}
\eb\Dt^2v+\Dt v+Av=\Dt h,\ \ \ t\le0,
\end{equation}
which is again of the form of \eqref{A.main-lin}. Moreover, due to \eqref{A.est}, we have the control of the expression $\|v\|_{L^2_{e^{\theta t}}(\R_-,H)}^2$. Therefore, we get all parts of estimate \eqref{A.est-reg} from estimate \eqref{1.sm-min} applied to equation \eqref{A.diff}. Expressing now the term $Au$ from equation \eqref{A.main-lin} and using the already obtained parts of \eqref{A.est-reg} for estimating the term $\eb\Dt^2u=\eb\Dt v$, we get the desired estimate for the $H^2$-norms of $u$ and finish the proof of the corollary.
\end{proof}
The next corollary is the analogue of Corollary \ref{CorA.ee} for this more regular case. To state it, we first recall that the second energy norm in the phase space is given by
$$
\|\xi_u\|^2_{\mathcal E^1_\eb}:=\eb\|\Dt u\|^2_{H^1}+\|u\|^2_{H^2}+\|\Dt u\|^2_{H}.
$$
\begin{corollary}\label{CorA.ee-reg} Let the assumptions of Corollary \ref{CorA.reg} hold. Then, the solution $u(t)$ of problem \eqref{A.main-lin} satisfies the following estimate:
\begin{equation}\label{A.phaseest1}
\|\xi_u(t)\|^2_{\mathcal E_\eb^1}\le Ce^{-2\theta t}\(\|h\|^2_{W^{1,2}_{e^{\theta t}}(\R_-,H)}+\|p\|^2_H\),
\end{equation}
where the constant $C$ depends on $N$, but is independent of $\eb\to0$. Moreover, if in addition $\Dt h\in L^\infty_{e^{\theta t}}(\R_-,H^{-1})$, then the following estimate holds:
\begin{equation}\label{A.phaseest2}
\|\xi_{\Dt u}(t)\|^2_{\mathcal E_\eb}\le Ce^{-2\theta t}\(\|h\|^2_{W^{1,2}_{e^{\theta t}}(\R_-,H)}+\|\Dt h\|^2_{L^\infty_{e^{\theta t}}(\R_-,H^{-1})}+\|p\|^2_H\),
\end{equation}
where the constant $C$ depends on $N$, but is independent of $\eb\to0$.
\end{corollary}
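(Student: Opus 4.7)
The plan is to read off the first estimate directly from Corollary \ref{CorA.reg} and then handle the second one by differentiating the equation and running a boundary-layer argument modelled on the proof of Corollary \ref{CorA.ee}.

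For \eqref{A.phaseest1}, the three ingredients of $\|\xi_u(t)\|^2_{\mathcal E^1_\eb}$, namely $\eb\|\Dt u(t)\|^2_{H^1}$, $\|u(t)\|^2_{H^2}$ and $\|\Dt u(t)\|^2_H$, are each controlled by one of the sup-type terms appearing on the left-hand side of \eqref{A.est-reg}. Indeed, by definition of the weighted $C_{e^{\theta t}}$ norm, an estimate of the form $\|w\|^2_{C_{e^{\theta t}}(\R_-,X)} \le C_N(\ldots)$ rearranges to $\|w(t)\|^2_X \le e^{-2\theta t} C_N(\ldots)$ for every $t\le 0$. Applying this to $w=u$ with $X=H^2$, to $w=\Dt u$ with $X=H^1$ (and using $\|\cdot\|_H\le\lambda_1^{-1/2}\|\cdot\|_{H^1}$ to cover the $\|\Dt u(t)\|^2_H$ term as well), and multiplying the middle bound by $\eb$, immediately yields \eqref{A.phaseest1}.

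For \eqref{A.phaseest2}, set $v:=\Dt u$ and recall from the proof of Corollary \ref{CorA.reg} that $v$ solves
\begin{equation*}
\eb\Dt^2 v+\Dt v+Av=\Dt h,\qquad t\le 0.
\end{equation*}
The terms $\eb\|\Dt^2 u(t)\|^2_H=\eb\|\Dt v(t)\|^2_H$ and $\|\Dt u(t)\|^2_{H^1}=\|v(t)\|^2_{H^1}$ are already pointwise-controlled by $e^{-2\theta t}C_N(\|h\|^2_{W^{1,2}_{e^{\theta t}}(\R_-,H)}+\|p\|_H^2)$ via \eqref{A.est-reg}, so only $\|\Dt v(t)\|^2_{H^{-1}}=\|\Dt^2 u(t)\|^2_{H^{-1}}$ remains. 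To estimate this piece, I would reproduce the computation from the proof of Corollary \ref{CorA.ee} applied to the $v$-equation: take the inner product with $A^{-1}\Dt v$, absorb $-(v,\Dt v)$ into $\tfrac12\|\Dt v\|^2_{H^{-1}}+\tfrac12\|v\|^2_{H^1}$ and $(\Dt h,A^{-1}\Dt v)$ into $\tfrac12\|\Dt v\|^2_{H^{-1}}+\tfrac12\|\Dt h\|^2_{H^{-1}}$, to obtain
\begin{equation*}
\eb\frac{d}{dt}\|\Dt v\|^2_{H^{-1}}+\|\Dt v\|^2_{H^{-1}}\le C\big(\|v\|^2_{H^1}+\|\Dt h\|^2_{H^{-1}}\big).
\end{equation*}
Integrating over $[t-1,t]$ produces the boundary-layer bound
\begin{equation*}
\|\Dt v(t)\|^2_{H^{-1}}\le C\eb\,\|\Dt v(t-1)\|^2_H+C\big(\|\Dt h\|^2_{L^\infty(t-1,t;H^{-1})}+\|v\|^2_{L^\infty(t-1,t;H^1)}\big),
\end{equation*}
using $\|\cdot\|_{H^{-1}}\le\lambda_1^{-1/2}\|\cdot\|_H$ and $e^{-\eb^{-1}}\le 1$. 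Multiplying through by $e^{2\theta t}$ and bounding the three right-hand terms by $\eb\|\Dt^2 u\|^2_{C_{e^{\theta t}}(\R_-,H)}$, the new hypothesis $\|\Dt h\|^2_{L^\infty_{e^{\theta t}}(\R_-,H^{-1})}$, and $\|\Dt u\|^2_{C_{e^{\theta t}}(\R_-,H^1)}$ respectively, each of which is uniformly controlled by Corollary \ref{CorA.reg} in terms of $\|h\|^2_{W^{1,2}_{e^{\theta t}}(\R_-,H)}+\|p\|^2_H$, one arrives at the required pointwise bound on $\|\Dt^2 u(t)\|^2_{H^{-1}}$ and hence at \eqref{A.phaseest2}.

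The only place that requires a little care is keeping all constants uniform in $\eb\to 0$ while converting between the local $L^\infty(t-1,t;\cdot)$ norms produced by the boundary-layer integration and the global weighted norms supplied by Corollary \ref{CorA.reg}. Since $e^{2\theta t}\le e^{2\theta}\cdot e^{2\theta(t-1)}$ for $t\in\R_-$, this conversion costs only a harmless multiplicative constant, and the argument of Corollary \ref{CorA.ee} then goes through verbatim with $v$ replacing $u$ and $\Dt h$ replacing $h$; no genuinely new obstacle appears.
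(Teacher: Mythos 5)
Your proposal is correct and follows essentially the same route as the paper: the paper's own one-line proof says that \eqref{A.phaseest1} is simply read off from \eqref{A.est-reg}, and that \eqref{A.phaseest2} follows by applying Corollary \ref{CorA.ee} to the differentiated equation \eqref{A.diff} for $v=\Dt u$ -- exactly the two steps you carry out, with the second one unfolded into the underlying boundary-layer computation rather than invoked abstractly. The only cosmetic issue is the coefficient bookkeeping in the differential inequality for $\|\Dt v\|^2_{H^{-1}}$: as written, testing with $A^{-1}\Dt v$ and absorbing with $\tfrac12$ factors swallows the entire dissipation term, so one should test with $2A^{-1}\Dt v$ (or use smaller Young coefficients) to retain $+\|\Dt v\|^2_{H^{-1}}$ on the left, which is what drives the boundary-layer decay; this is a matter of constants, not of substance.
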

Indeed, estimate \eqref{A.phaseest2} follows from \eqref{A.phaseest} applied to the equation \eqref{A.diff} and \eqref{A.phaseest1} is already obtained in \eqref{A.est-reg}.


\end{document}